\numberwithin{equation}{section}
\newtheorem{proposition}{Proposition}[section]
\newtheorem{lemma}[proposition]{Lemma}
\newtheorem{theorem}[proposition]{Theorem}
\newtheorem{corollary}[proposition]{Corollary}
\theoremstyle{definition}
\newtheorem{remark}[proposition]{Remark}
\newtheorem{definition}[proposition]{Definition}
\DeclareMathOperator{\diag}{diag}
\renewcommand{\epsilon}{\varepsilon}
\newcommand{\mfl}{\mathfrak{l}}
\renewcommand{\phi}{\varphi}
\newcommand\an{^{\mathrm{an}}}
\newcommand\mi{^{-1}}
\newcommand\vol{\mathrm{vol}}
\newcommand\bba{\mathbb{A}}
\newcommand\bbq{\mathbb{Q}}
\newcommand\bbr{\mathbb{R}}
\newcommand\cC{\mathcal{C}}
\newcommand\cE{\mathcal{E}}
\newcommand\cN{\mathcal{N}}
\newcommand\cR{\mathcal{R}}
\newcommand{\norm}[1][\cdot]{\left\|#1\right\|}
\newcommand\triv{{\mathrm{triv}}}
\newcommand\field{{\mathrm{k}}}
\renewcommand\diag{{\mathrm{diag}}}
\newcommand\zb{{0,\bullet}}
\newcommand\bt{{\bullet,t}}
\title{Filtrations and asymptotic geometry of non-Archimedean norms on section rings.}
\author{Rémi Reboulet}
\begin{document}

\maketitle

\begin{abstract}
   	This article is concerned with the metric study of a construction of Gérardin of the action of the boundary at infinity of the space of norms on a non-Archimedean vector space, and its generalisation to graded algebras. Namely, given $(X,L)$ a polarised variety over an arbitrary non-Archimedean field, we show that there is a jointly $d_1$-contracting action of the space of filtrations of the section ring $R(X,L)$ on the space of graded norms on $R(X,L)$. This naturally yields non-Archimedean geodesic rays and infinite-dimensional flats in this setting, generalising previous work of the author and Witt Nyström. It is further shown that relative limit measures converge along geodesic rays, providing a result on the $d_p$-radial geometry of graded norms, analogous to a recent result of Finski in the Archimedean case.
\end{abstract}

\tableofcontents

\section*{Introduction.}

\paragraph{The Goldman--Iwahori space and the Gérardin action.} Motivated by non-Archimedean quadratic forms, Goldman and Iwahori \cite{goldmaniwahori} (see also Mahler \cite{mahler} for a different perspective) investigate the space of norms on a finite-dimensional $p$-adic vector space, proving deeply influential results on its topology and metric structure, showing how close and how far it is from the better understood space of Hermitian matrices. 

Through the years, the more general space $\cN(V)$ of non-Archimedean norms on a finite-dimensional vector space over general non-Archimedean fields has seen much interest, from the point of view of algebraic groups and Bruhat--Tits buildings \cite{iwahorimatsumoto:ihes} \cite{schneiderstuhler:inventiones}, the Langlands correspondence \cite{drinfeld,fargues:progressinmath}, and more recently non-Archimedean pluripotential theory \cite{boueri,reb:1,reb:2} and its connections with Kähler geometry and the Yau--Tian--Donaldson conjecture \cite{bbj,bj:kstab1,reb:3,abbanzhuang}.

Much as its Hermitian analogue, $\cN(V)$ has been identified as a negatively curved metric space, and the purpose of this article is to follow the natural direction of understanding its asymptotic metric geometry. The main existing precise work in this direction, beyond general results on Euclidean buildings, is due to Gérardin \cite{book:gerardin}, in which it is shown that the space of directions at infinity in $\cN(V)$ are loosely identified with \textit{filtrations} of $V$. He defines a \textit{geodesic action} of the space of filtrations and studies some of its properties.

In this work, we make precise this boundary at infinity picture, and extend it to the infinite-dimensional setting of submultiplicative norms on graded algebras.

First begin with $V$ a finite-dimensional $\field$-vector space, with $(\field,|\cdot|)$ non-Archimedean. We consider two spaces simultaneously: the space $\cN(V)$ of \textit{norms} on $V$, that is: functions $\norm:V\to\bbr_{\geq 0}$ that are definite, $|\cdot|$-homogeneous and satisfy the ultrametric inequality; and the space $\cN_0(V)$ of \textit{trivially-valued norms} on $V$, which are $|\cdot|_0$-homogeneous, where $|\cdot|_0$ is the \textit{trivial} absolute value equal to $1$ on $\field^\times$. Elements in $\cN_0(V)$ are in canonical bijection with filtrations of $V$, but the norm perspective will make many constructions clearer.

Both spaces may be endowed with various metric structures, beginning with the $d_\infty$-distance pioneered in \cite{goldmaniwahori}:
$$d_\infty(\norm,\norm')=\sup_{v\in V}\left|\log\frac{\norm[v]'}{\norm[v]}\right|,$$
but also $d_p$ distances for $1\leq p<\infty$, which roughly correspond with the normalised absolute $p$-th moments of their relative spectral measure $\sigma(\norm,\norm')$ -- a non-Archimedaen analogue of the sum of Dirac masses at the logarithmic eigenvalues of the transition matrix between two Hermitian norms. The case $p=2$ is the one used in the theory of Bruhat--Tits buildings, and $(\cN(V),d_2)$ is CAT(0). The case $p=1$ is closely related to determinants of norms, which traces its roots back to Mahler \cite{mahler}. The general $p$ case was introduced in \cite{boueri}, inspired by work of Darvas \cite{dar:mabuchi} and Darvas--Lu--Rubinstein \cite{dar:quanti} in the Hermitian setting, itself going back to Bhatia \cite{bhatia}.

The space $(\cN(V),d_p)$ is geodesic, as was seen already by \cite{goldmaniwahori} over a local field, and systematically studied in \cite{reb:2}, and negatively curved, justifying the interest in the study of its asymptotic geometry. Gérardin \cite{book:gerardin} introduces the so-called \textit{geodesic action} of $\cN_0(V)$ on $\cN(V)$ (over a local field): given a trivially-valued norm (equivalently a filtration) $\norm_0\in \cN_0(V)$, the envelope
$$\norm_0.\norm[v]:=\sup\{\norm[v]',\,\norm'\leq\norm\cdot\norm'\text{ on V}\}$$
admits an upper bound in $\cN(V)$, and using the scaling action $t.\norm_0$ of $\cN_0(V)$ turns out to yield geodesic rays
$$t\mapsto (t.\norm_0).\norm\in \cN(V).$$
We study the metric properties of this geodesic action, and in particular prove that $(\cN_0(V),d_p)$ is the boundary at infinity of $(\cN(V),d_p)$, as is the case for Hermitian matrices (see \cite[(1-2)]{bou:icm}).

\paragraph{The graded setting.} In the Archimedean case, given a complex projective manifold $X$ endowed with an ample line bundle $L$, filtrations of the section ring of $L$ play a major role in the study of K-stability \cite{bj:kstab1,bj:kstab2,sze:filtr} and are realised in a certain way as the boundary at infinity of the space of Kähler metrics (or its completions) on $L$, see \cite{finski:dp}. Kähler and more general plurisubharmonic metrics on $L$ are themselves realised by sequences of Hermitian norms on the section ring of $L$ \cite{bk:tian,bk:zelditch,berbou,dar:quanti,finski:metricstructure}. 

Analogously, pick $X$ a smooth projective variety over $\field$, and $L$ an ample line bundle on $X$. The set $\cN(R)$ of graded (i.e.\ bounded and submultiplicative) non-Archimedean norms on the section ring $R:=R(X,L)$ of $L$ may be interpreted as a set of non-Archimedean plurisubharmonic metrics on $L$ \cite{boueri}, and its trivially-valued analogue $\cN_0(R)$ should, from the finite-dimensional picture, realise its boundary at infinity.

We first describe its metrisation. By an equidistribution result of Chen--Maclean \cite{cmac}, given two graded norms $\norm_\bullet,\norm'_\bullet$ on the section ring of $L$, the relative spectral measures $m\mi_*\sigma(\norm_m,\norm'_m)$ converge weakly to a compactly supported measure $\sigma(\norm_\bullet,\norm'_\bullet)$ often called their \textit{relative limit measure}. In particular, one can define $d_p$-(pseudo)metrics on $\cN(R)$ by
$$d_p(\norm_\bullet,\norm'_\bullet)^p=\int |x|^p d\sigma(\norm_\bullet,\norm'_\bullet)=\lim_{m\to\infty}m\mi d_p(\norm_m,\norm'_m).$$
The induced metric quotient is further independent of $p$. In practice, our results will pass to the quotient (and the metric completion), but for clarity of presentation we will omit taking the quotient in the expression of our results in this introduction. Building on our results concerning Gérardin's action in the finite-dimensional setting, we obtain in particular:

\bigskip\noindent \textbf{Theorem A.} Let $(X,L)$ be a polarised smooth variety over an arbitrary non-Archimedean field $(\field,|\cdot|)$, and let $R:=R(X,L)$ be its section ring. There is an action $\cN_0(R)\times \cN(R)\to \cN(R)$ which:
\begin{enumerate}
    \item is $1$-Lipschitz in both variables with respect to the $d_1$ metric;
    \item transforms the scaling action on $\cN_0(R)$ into geodesic rays.
\end{enumerate}

Building on Theorem A and our previous results on non-Archimedean geodesics in \cite{reb:2}, and also on \cite{goldmaniwahori} and work of Parreau \cite{parreau:immeubles}, we study geodesic rays in $\cN(R)$, providing a non-Archimedean version of the result of Phong--Sturm \cite{ps:geodesics}. Further, given a geodesic ray in $\cN(R)$ we define its \textit{trivially-valued limit} in $\cN_0(R)$, which in our context is an analogue of the non-Archimedean limit operation of Kähler geodesic rays as in \cite{bbj} (see also \cite{reb:3}). Our second main theorem relates the radial geometry of geodesic rays in $\cN(R)$ with the geometry of $\cN_0(R)$, and effectively states that $(\cN_0(R),d_p)$ is the boundary at infinity of $(\cN(R),d_p)$, akin to the limit results in \cite{reb:3,finski:dp} (see also \cite{bdl:kenergy}) in the Kähler case. Our result also has a much stronger formulation in terms of relative limit measures, the non-Archimedean version of \cite[Theorem 2.2]{finski:dp}:

\bigskip\noindent \textbf{Theorem B.} Let $t\mapsto \norm_{\bullet,t},\norm'_{\bullet,t}$ be geodesic rays in $\cN(R)$. Let $\norm_\zb$, $\norm'_\zb$ be their trivially-valued limits. Then
$$\lim_{t\to\infty}t\mi_* \sigma(\norm_{\bullet,t},\norm'_{\bullet,t})=\sigma(\norm_\zb,\norm'_\zb).$$
In particular, for all $p\in [1,\infty)$,
$$\lim_{t\to\infty}t\mi d_p(\norm_{\bullet,t},\norm'_{\bullet,t})=d_p(\norm_\zb,\norm'_\zb)$$

\bigskip This induces a precise radial isometry result, see Corollary \ref{coro:isometry}.

Finally, using Theorem A and modifying a construction of \cite{reb:wn} we then find:

\textbf{Theorem C.} Any element in $\cN(R)$ lies at the apex of infinitely many infinite-dimensional $d_p$-metric flat cones, for $p\in [1,\infty)$.

\bigskip A more precise result is stated in Theorem \ref{thm:b}. By a metric flat, we mean an isometric embedding from a convex subset of a (strictly convex) normed vector space -- geodesics and geodesic rays being examples of one-dimensional metric flats. This result should be seen from the point of view of Bruhat--Tits buildings, which the finite-dimensional $\cN(V)$ can be realised as: any norm belongs to an \textit{apartment}, isometric to $\bbr^{\dim_\field V}$. This result therefore provides a first glimpse into the infinite-dimensional building structure of $\cN(R)$.

\paragraph{Strategy of proof.}  Theorem A follows by quantisation from its finite-dimensional analogue, which is already completely new and requires a deep study of the $d_1$ and $d_\infty$-metric properties of the Gérardin action on the Goldman--Iwahori space $\cN(V)$. The two fundamental results are Theorems \ref{thm:d1envelope} and \ref{thm:dinftyenvelope}. The former requires determinant arguments building on \cite{boueri,reb:2} while the latter, especially in the non-diagonalisable case, requires involved results on almost-orthogonalisations and insights from \cite{goldmaniwahori}.

While Theorem \ref{thm:convergencemeasurefindim}, which is the finite-dimensional analogue of Theorem B, is almost trivial once the right formalism is developed, its proof does not work in the graded setting. The proof of Theorem B is much more difficult, relying on a technical result of \cite{cmac} and a volume argument inspired by \cite{bdl:kenergy,reb:3}, adapted to the setting of non-Archimedean graded norms.

Theorem C uses the arguments of \cite{reb:wn}, with a slight modification of its main construction.

\paragraph{Thoughts.} The $d_p$-analogue of Theorem \ref{thm:d1envelope} and therefore of Theorem A, though not required for our main results, remains unclear. While one can show a $d_p$-type uniform convexity result, obtaining an $L^p$-analogue of the CAT(0) condition using the framework of Euclidean buildings, it turns out as was pointed out by Tamás Darvas to the author that it is an open problem in metric spaces in general whether $L^p$ CAT(0) conditions imply Busemann convexity.

As mentioned in \cite{reb:wn} already (but also \cite{donaldson:symmetric,codogni:tits}), it is desirable to either upgrade the result from Theorem C to a more complete building-style picture, or to understand precisely why such an upgrade is impossible. More precisely, buildings have the fundamental property that any two elements share an apartment, i.e.\ belong to a same flat subspace. This is clearly not true of our metric flat cones, suggesting the picture to be more subtle than what is given by Theorem C.

In a recent article, Darvas--McCleerey \cite{dar:lines} study geodesic \textit{lines} in the space of Kähler metrics, and show existence of many such lines in full generality, disproving a conjecture of Berndtsson. This is a surprising result: from the perspective of Euclidean buildings, one would expect existence of such nontrivial lines only in the case where $(X,L)$ admits a $\mathbb{G}_m$ action, which is indeed the algebraic version of the aforementioned conjecture. It is clear that the naive quantisation construction of geodesic lines in $\cN(R)$ fails, suggesting a new approach is required. The author therefore aims to pursue this direction in later work.

Finally, in Section \ref{sect:2na}, we very briefly touch upon the non-Archimedean pluripotential theory interpretation of our results. This is relegated to such a short section due to two problems: the need for continuity of envelopes, which is partially bypassed by arguments from \cite{bj:trivval,bj:synthetic}, but more importantly the lack of characterisation of non-Archimedean metrics in $\cE^p$ in terms of pluripotential theory, for $p>1$, in the spirit of \cite{dar:mabuchi}.

\paragraph{Organisation of the paper.} The article is clearly divided into a section on the finite-dimensional setting of norms on a finite-dimensional vector space, and on the infinite-dimensional setting of submultiplicative norms on the section ring of a line bundle.

We mostly recall known facts from \cite{goldmaniwahori,book:gerardin,boueri} in Sections \ref{sect:11}-\ref{sect:13}, providing proofs for improvements of results therein and various likely folklore results that we could not find written explicitly in the literature. In Sections \ref{sect:14} and \ref{sect:15}, we prove two of the three main results of Section 1 in Theorems \ref{thm:d1envelope} and \ref{thm:dinftyenvelope}. Section \ref{sect:16} recalls facts on geodesics from \cite{reb:2}, but requires a few technical results due to following the conventions of \cite{goldmaniwahori}.  In Section \ref{sect:17}, we study geodesic rays, and use work of Parreau \cite{parreau:immeubles}.

Sections \ref{sect:21}-\ref{sect:24} mostly consist in setting up the graded picture and showing that many of our finite-dimensional results extend by quantisation. Theorem A is thus proven in passing in Section \ref{sect:24}. Section \ref{sect:25} contains the proof of Theorem B. Section \ref{sect:26} proves Theorem C, building on \cite{reb:wn}. Section \ref{sect:2na} discusses the non-Archimedean pluripotential theory picture.

\paragraph{Acknowledgements.} The author thanks Sébastien Boucksom, Tamáas Darvas, and Siarhei Finski for discussions related to the matter of this article.

\section*{Notation.}

By \textit{non-Archimedean field} we mean a field $\field$ endowed with an ultrametric absolute value $|\cdot|$ such that $\field$ is Cauchy-complete with respect to the topology induced by $|\cdot|$. We also denote by $|\cdot|_0$ the \textit{trivial} absolute value on $\field$, which is equal to $1$ on $\field-\{0\}$. Objects (such as norms) related to the trivial absolute value will usually be denoted with a ${}_0$ index. This occasionally creates confusion when considering the time zero evaluation of a geodesic (ray), which we try to mitigate as much as possible.

\section{Asymptotic geometry of the space of norms.}

Throughout, we fix a non-Archimedean field $(\field,|\cdot|)$, and $V$ a $\field$-vector space with $n:=\dim_\field V<\infty$. 

\subsection{Preliminaries: the Goldman--Iwahori space.}\label{sect:11}

We define a \textit{non-Archimedean norm} on $V$ to be a positive definite, $|\cdot|$-homogeneous function $\norm:V\to\bbr_{\geq 0}$ satisfying the ultrametric inequality:
$$\norm[v+v']\leq\max(\norm[v],\norm[v']).$$
We denote by $\cN(V)$ the set of all norms on $V$. We also denote by $\cN_0(V)$ the set of non-Archimedean norms on $V$ with respect to the trivial absolute value $|\cdot|_0$. Note that $\cN_0(V)$ contains the \textit{trivial norm} $\norm_\triv$, equal to $1$ on $V-\{0\}$.

There is a translating action of $\bbr$ on $\cN(V)$, given by $e^{c}\norm$. There is further a \textit{scaling action} of $\bbr_{>0}$ on $\cN_0(V)$, given by
\begin{equation}
    t.\norm_0:=(\norm_0)^t.
\end{equation}

The space $\cN(V)$ is endowed with a partial order whereby $\norm\leq\norm'$ if, for all $v\in V$, $\norm[v]\leq\norm[v]'$. With respect to this partial order, $\cN(V)$ has the least upper bound property, which is left to the reader:
\begin{lemma}\label{lem:upperbound}
    Let $S\subset \cN(V)$ be a subset, and assume that there exists a finite-valued function $f:V\to\bbr$ such that, for all $\norm\in S$ and all $v\in V$, $\norm[v]\leq f(v)$. Then
    $$\norm[v]_S:=\sup_{\norm\in S} \norm[v]$$
    is a norm in $\cN(V)$.
\end{lemma}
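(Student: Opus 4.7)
The plan is a direct verification of the four defining properties of a non-Archimedean norm for the pointwise supremum $\norm_S := \sup_{\norm\in S}\norm$, using only formal compatibility of $\sup$ with the relevant operations.

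First I would handle well-definedness: the hypothesis on $f$ provides $\norm[v]_S\leq f(v)<\infty$ for every $v\in V$, so $\norm_S$ is a well-defined function $V\to\bbr_{\geq 0}$. Next, $|\cdot|$-homogeneity follows from pulling the nonnegative scalar $|a|$ out of the supremum: for all $a\in\field$ and $v\in V$,
$$\norm[av]_S = \sup_{\norm\in S}|a|\cdot\norm[v] = |a|\cdot\norm[v]_S.$$
For the ultrametric inequality, for every $\norm\in S$ one has $\norm[v+v']\leq\max(\norm[v],\norm[v'])\leq\max(\norm[v]_S,\norm[v']_S)$, and taking the supremum on the left gives $\norm[v+v']_S\leq\max(\norm[v]_S,\norm[v']_S)$.

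Positive definiteness is the only mildly delicate step: clearly $\norm[0]_S=0$, and for $v\neq 0$ one picks any $\norm_\ast\in S$ (which exists since $S$ is implicitly nonempty---otherwise the conclusion fails trivially) to conclude $\norm[v]_S\geq\norm[v]_\ast>0$. There is no substantive obstacle anywhere in the argument; the entire statement is a formal compatibility check between suprema and the norm axioms, which is presumably why the author leaves it to the reader. The one genuine role of the hypothesis on $f$ is precisely to rule out the supremum being infinite---without it, the axioms are all preserved but the resulting extended-real valued function need not land in $\bbr_{\geq 0}$.
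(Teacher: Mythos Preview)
Your proof is correct and is precisely the routine verification the paper intends, as it explicitly leaves this lemma to the reader. The only minor caveat you already flagged—that $S$ must be nonempty for positive definiteness—is indeed implicit in the paper's later uses of the lemma.
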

The pointwise maximum of two norms is therefore a norm, which we denote by $\norm\vee\norm'$.

The space of norms $\cN(V)$ is endowed with a natural distance which we will call the \textit{Goldman--Iwahori distance} \cite{goldmaniwahori}, given by
$$d_\infty(\norm,\norm'):=\sup_{v\in V-\{0\}} \left|\log\frac{\norm[v]'}{\norm[v]}\right|,$$
which is finite as all norms on $V$ are equivalent. It has the following very useful characterisation: $d_\infty(\norm,\norm')$ is the smallest constant $C>0$ such that for all $v\in V$,
\begin{equation}\label{eq:dinfty}
    e^{-C}\norm[v]\leq\norm[v]'\leq e^C\norm[v].
\end{equation}
The Goldman--Iwahori space $(\cN(V),d_\infty)$ is Cauchy complete (\cite[Proposition 1.8]{boueri}). As a consequence of the previous lemma and \eqref{eq:dinfty}, we see that $d_\infty$-bounded subsets have a supremum:
\begin{lemma}
    Let $S\subset \cN(V)$ be a subset, and assume there exists $C>0$ such that, for all $\norm\in S$, and for a (hence any) $\norm'\in \cN(V)$,
    $$d_\infty(\norm,\norm')\leq C.$$
    Then the pointwise supremum of norms in $S$ is a norm in $\cN(V)$.
\end{lemma}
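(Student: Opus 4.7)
The plan is to deduce this directly from Lemma \ref{lem:upperbound} together with the characterisation \eqref{eq:dinfty} of $d_\infty$. First I would fix an arbitrary reference norm $\norm' \in \cN(V)$ (which exists since $V$ is finite-dimensional and admits, for instance, a diagonal norm in any basis). The phrase "a (hence any)" in the hypothesis is justified by the triangle inequality for $d_\infty$: if every $\norm \in S$ lies at $d_\infty$-distance at most $C$ from one reference norm, it lies at distance at most $C + d_\infty(\norm', \norm'')$ from any other $\norm''$, so the hypothesis is genuinely independent of the choice of $\norm'$.

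Next I would apply \eqref{eq:dinfty} to each $\norm \in S$ against $\norm'$: the bound $d_\infty(\norm,\norm') \leq C$ immediately gives, for every $v \in V$, the pointwise inequality $\norm[v] \leq e^C \norm[v]'$. Defining the finite-valued function $f(v) := e^C \norm[v]'$, this says every $\norm \in S$ is dominated pointwise by $f$. Lemma \ref{lem:upperbound} then applies verbatim and furnishes the supremum $\norm[v]_S := \sup_{\norm \in S} \norm[v]$ as an element of $\cN(V)$.

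There is no real obstacle here: the content of the statement is essentially packaging the equivalence between $d_\infty$-boundedness and pointwise-boundedness through the explicit two-sided estimate \eqref{eq:dinfty}, then invoking the already-established least upper bound property in the form of Lemma \ref{lem:upperbound}. The only thing worth remarking is that the same argument, using the lower bound $\norm[v] \geq e^{-C} \norm[v]'$ in \eqref{eq:dinfty}, shows moreover that the supremum $\norm_S$ itself satisfies $d_\infty(\norm_S, \norm') \leq C$, so the supremum remains in the same $d_\infty$-ball — a small observation that might be worth recording if used later, but is not required for the statement itself.
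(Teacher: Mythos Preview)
Your proposal is correct and matches the paper's approach exactly: the paper presents this lemma as an immediate consequence of Lemma~\ref{lem:upperbound} together with \eqref{eq:dinfty}, with no further argument given. Your write-up simply unpacks that deduction (and your side remark that $d_\infty(\norm_S,\norm')\leq C$ is a harmless bonus).
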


We say that a basis $(v_i)_i$ of $V$ is \textit{orthogonal} or \textit{diagonalising} for $\norm\in \cN(V)$ if, for all $v=\sum a_i v_i\in V$, we have
$$\norm[v]=\max_i |a_i|\norm[v_i].$$
We denote by $\cN^\diag(V)$ the set of norms on $V$ admitting an orthogonal basis. In general, the inclusion $\cN^\diag(V)\subseteq \cN(V)$ is strict, but we have the following results (\cite[Lemma 1.12, Proposition 1.14, Theorem 1.19]{boueri}):
\begin{theorem}
    $\cN^\diag(V)$ is $d_\infty$-dense in $\cN(V)$. If $\dim V\leq 1$, $\cN^\diag(V)=\cN(V)$. If $\dim V\geq 2$, the following are equivalent:
    \begin{enumerate}
        \item $\cN^\diag(V)=\cN(V)$;
        \item any two norms in $V$ admit a joint orthogonal basis;
        \item $(\field,|\cdot|)$ is \textit{spherically complete}.
    \end{enumerate}
\end{theorem}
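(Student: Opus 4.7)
The theorem has three parts, which I would address in turn, using the ultrametric structure throughout.

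For the density statement, my approach will be to show that for every $\norm\in\cN(V)$ and every $\epsilon>0$, one can build an \emph{$\epsilon$-orthogonal basis} $(v_1,\dots,v_n)$ for $\norm$, i.e.\ a basis with
\[
e^{-\epsilon}\max_i|a_i|\,\norm[v_i]\leq\norm[\textstyle\sum_i a_iv_i]\leq\max_i|a_i|\,\norm[v_i]
\]
for all scalars $a_i$. The construction is inductive: pick $v_1\in V\setminus\{0\}$ nearly minimising $\norm[v]/|\ell(v)|$ for a fixed nonzero linear form $\ell$ (approximate minimisers always exist, even absent spherical completeness), then recurse on the quotient $V/\langle v_1\rangle$ endowed with its induced norm, and lift. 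The max-norm $\norm_\epsilon$ defined by $(v_i)$ then belongs to $\cN^\diag(V)$ and satisfies $d_\infty(\norm,\norm_\epsilon)\leq\epsilon$ by the characterisation \eqref{eq:dinfty}.

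For $\dim V\leq 1$, the claim is immediate, as any nonzero vector is tautologically an orthogonal basis. For the equivalence in $\dim V\geq 2$, I would run the cycle (2)$\Rightarrow$(1)$\Rightarrow$(3)$\Rightarrow$(2). The first implication follows by fixing any reference $\norm_\reff\in\cN^\diag(V)$ (e.g.\ the max-norm in a chosen basis) and applying (2) to $(\norm,\norm_\reff)$ for arbitrary $\norm\in\cN(V)$; the resulting joint basis in particular diagonalises $\norm$. For (1)$\Rightarrow$(3), I would argue contrapositively: given a descending sequence of closed balls $B_n=\overline B(a_n,r_n)\subseteq\field$ with empty intersection (which forces $r_n\to r>0$ by completeness of $\field$), I would define a norm on $\field^2$ by
\[
\norm[xe_1+ye_2]:=\max\bigl(r|x|,\ \inf_n|y-a_nx|\bigr)
\]
and show that the existence of an orthogonal basis $(f_1,f_2)$ would force some element of $\field$ to lie in $\bigcap_n B_n$, a contradiction; embedding this rank-$2$ example into any $V$ of dimension $\geq 2$ then yields a non-diagonalisable norm. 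For (3)$\Rightarrow$(2), I would invoke the classical Monna--Cartier theorem that over a spherically complete field every finite-dimensional non-Archimedean normed space has an orthogonal basis, and upgrade to the \emph{joint} statement by induction on $\dim V$: the function $v\mapsto\log(\norm[v]_1/\norm[v]_2)$ factors through $\bbp(V)$ and, in the spherically complete case, attains its infimum on a vector $v_1$; the quotient norms on $V/\langle v_1\rangle$ inherit hypothesis (3), and the induction step lifts the common basis.

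The main obstacle is (3)$\Rightarrow$(2). Existence of the minimiser on $\bbp(V)$ is the precise manifestation of spherical completeness and is the nontrivial ingredient making the induction start; one must further check that the quotient procedure genuinely preserves hypothesis (3), using the fact that the quotient of a norm with an attained minimum inherits the same property. A close second difficulty is the construction in (1)$\Rightarrow$(3): verifying that the proposed formula defines a norm and that \emph{no} orthogonal basis can exist requires tightly linking any candidate basis to the pathological sequence $(a_n,r_n)$, and some care is needed in choosing the defining expression so that the would-be minimiser of the norm ratio on a $1$-dimensional subspace lands in $\bigcap_n B_n$.
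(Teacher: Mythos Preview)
The paper does not actually prove this theorem: it is quoted verbatim as a known result, with the citation \cite[Lemma 1.12, Proposition 1.14, Theorem 1.19]{boueri} immediately preceding the statement. There is therefore no ``paper's own proof'' to compare your proposal against.

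That said, your plan is the standard one and is essentially what appears in the cited reference. A few small remarks. In (3)$\Rightarrow$(2), your phrase ``the quotient norms on $V/\langle v_1\rangle$ inherit hypothesis (3)'' is slightly misdirected: condition (3) is a property of the field $(\field,|\cdot|)$, not of $V$, so there is nothing to inherit; what actually needs checking is that a joint orthogonal basis of the two quotient norms lifts to one on $V$ once $v_1$ has been chosen to realise the extremal ratio. In (1)$\Rightarrow$(3), your formula $\norm[xe_1+ye_2]=\max(r|x|,\inf_n|y-a_nx|)$ works, but you should note that for $x\neq 0$ the infimum over $n$ is in fact attained (since $y/x\notin B_N$ for some $N$ forces $|y/x-a_n|$ to be eventually constant), which is what makes the ultrametric inequality go through; the non-diagonalisability then follows because the quotient norm on $\field^2/\field e_2$ equals $r|x|$ but is never achieved by any lift, whereas an orthogonal basis would provide such a lift. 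These are cosmetic points; the architecture of your argument is correct.
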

Spherical completeness means that any nested sequence of $|\cdot|$-balls in $\field$ has nonempty intersection. Importantly, any field is spherically complete with respect to the trivial absolute value, as is easily checked, so that:
\begin{theorem}
    Over any non-Archimedean field, $\cN^0(V)=\cN_0^\diag(V)$.
\end{theorem}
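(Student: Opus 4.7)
The statement should be read as $\cN_0(V) = \cN_0^{\diag}(V)$, i.e., every trivially-valued norm on $V$ admits an orthogonal basis. The plan is simply to reduce this to the preceding theorem applied to the field $(\field, |\cdot|_0)$ equipped with the trivial absolute value, with the observation that any field is spherically complete with respect to $|\cdot|_0$.

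First I would make the meta-observation that the preceding theorem, as stated, concerns an \emph{arbitrary} non-Archimedean field $(\field, |\cdot|)$. The space $\cN_0(V)$ is by definition the space of non-Archimedean norms on $V$ with respect to $|\cdot|_0$, and $(\field, |\cdot|_0)$ is itself a non-Archimedean field (the ultrametric inequality for $|\cdot|_0$ is immediate, and $\field$ is trivially Cauchy-complete for the discrete topology). Therefore the previous theorem applies to $(\field, |\cdot|_0)$ and yields $\cN_0^{\diag}(V) = \cN_0(V)$ \emph{provided} $(\field, |\cdot|_0)$ is spherically complete.

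The heart of the argument is thus to check spherical completeness for the trivial absolute value. This is essentially trivial: since $|x|_0 \in \{0,1\}$ for all $x \in \field$, any closed ball $\{y \in \field \mid |y-a|_0 \leq r\}$ is equal to $\{a\}$ when $r < 1$ and to $\field$ when $r \geq 1$. Given any nested decreasing sequence of such balls, either all balls from some index onward equal the same singleton (in which case the intersection is that point), or infinitely many balls equal $\field$, in which case the decreasing sequence is eventually equal to a singleton or to $\field$. In every case the intersection is nonempty, so $(\field,|\cdot|_0)$ is spherically complete.

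Putting these two observations together, the preceding theorem applied to $(\field,|\cdot|_0)$ gives the desired equality $\cN_0(V) = \cN_0^{\diag}(V)$. The only real content is therefore the spherical completeness check; there is no substantive obstacle, and the result should be recorded as an immediate corollary of the previous theorem.
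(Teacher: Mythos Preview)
Your proposal is correct and is exactly the argument the paper intends: the paper simply remarks, in the sentence preceding the theorem, that any field is spherically complete with respect to the trivial absolute value, and deduces the result from the preceding theorem. Your argument supplies the easy details of that spherical-completeness check; nothing more is needed.
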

Local fields are also spherically complete, meaning that diagonalisability issues do not arise in \cite{goldmaniwahori,book:gerardin}. Furthermore, any non-Archimedean field admits a spherical completion, either into a Hahn-type field (a formal series field over the residue field with the support of the series being well-ordered sets of $\bbr$) or into a $p$-adic Mal'cev--Neumann field (the $p$-adic-type construction starting from a Hahn-type field), see \cite{poonen}.  This fact is important in view of the following construction:
\begin{definition}
    Consider a valued field extension $(\mathfrak{l},|\cdot|_\mathfrak{l})$ of $(\field,|\cdot|)$ and $\norm\in \cN(V)$, we define the \textit{ground field extension} of $\norm$ to be, for $w\in V\otimes_\field \mathfrak{l}$,
    $$\norm[w]_\mathfrak{l}:=\inf_{w=\sum b_i v_i,\,b_i\in \mathfrak{l},\,v_i\in V}\max_i |b_i|_\mathfrak{l}\norm[v_i].$$
\end{definition}
In particular, the restriction of $\norm_\mathfrak{l}$ to $V$ coincides with the initial norm $\norm$, and it is the largest norm on $V\otimes_\field \mfl$ with this property (\cite[Proposition 1.25(i)]{boueri}). From the definition, we also see that
\begin{align}\label{eq:fieldineq}
    \norm\leq\norm'\Leftrightarrow \norm_\mfl\leq\norm'_\mfl.
\end{align}
We will often use this fact to extend non-diagonalisable norms to a spherically complete field, where they can be diagonalised, use results on diagonalisable norms on the larger field, and restrict them back to the base field. In particular, the following will be helpful:
\begin{proposition}[{\cite[Proposition 1.25(ii)]{boueri}}]\label{prop:groundfieldisometry}
    The embedding
    \begin{align*}
        (\cN(V),d_\infty)&\hookrightarrow (\cN(V\otimes_\field \mathfrak{l}),d_\infty),\\
        \norm&\mapsto \norm_\mfl
    \end{align*}
    is isometric.
\end{proposition}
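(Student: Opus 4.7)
The statement to prove is that ground field extension $\norm \mapsto \norm_\mfl$ is an isometry from $(\cN(V), d_\infty)$ into $(\cN(V\otimes_\field \mfl), d_\infty)$. The plan is to prove each inequality separately, using the two facts the excerpt has already set up: the characterisation \eqref{eq:dinfty} of $d_\infty$ by two-sided multiplicative bounds, and the equivalence \eqref{eq:fieldineq}.

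For the lower bound $d_\infty(\norm, \norm') \leq d_\infty(\norm_\mfl, \norm'_\mfl)$, I would simply observe that the ground field extension of a norm restricts to the original norm on $V$ (a property recalled just before Proposition~\ref{prop:groundfieldisometry}). Therefore, the supremum in the definition of $d_\infty(\norm_\mfl, \norm'_\mfl)$ over $V \otimes_\field \mfl - \{0\}$ is taken over a set that contains $V - \{0\}$, and on this subset the ratio $\norm[w]'_\mfl/\norm[w]_\mfl$ equals $\norm[v]'/\norm[v]$. This direction is purely formal.

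For the upper bound $d_\infty(\norm_\mfl, \norm'_\mfl) \leq d_\infty(\norm, \norm')$, let $C := d_\infty(\norm, \norm')$. By \eqref{eq:dinfty}, this is equivalent to the two inequalities of norms on $V$:
$$\norm' \leq e^C \norm, \qquad \norm \leq e^C \norm'.$$
The key step is to pass these two inequalities to the extension. Since scaling by the positive constant $e^C$ commutes with the ground field extension (both operations are defined on norms via the same infimum/supremum formula, and multiplying all terms by $e^C$ commutes with taking the infimum), we have $(e^C \norm)_\mfl = e^C \norm_\mfl$, and similarly on the other side. Applying \eqref{eq:fieldineq} then yields
$$\norm'_\mfl \leq e^C \norm_\mfl, \qquad \norm_\mfl \leq e^C \norm'_\mfl$$
as norms on $V \otimes_\field \mfl$, which by the characterisation \eqref{eq:dinfty} again means precisely $d_\infty(\norm_\mfl, \norm'_\mfl) \leq C$.

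I don't anticipate any real obstacle here: the statement is essentially a formal consequence of the two facts already established, namely (i) the multiplicative two-sided characterisation of $d_\infty$, and (ii) the monotonicity of the ground field extension with respect to the natural partial order on norms. The only point requiring a brief verification is that the extension operation commutes with scalar multiplication by a positive real, which is immediate from the defining infimum formula. If one wanted to be pedantic about the fact that the sup in $d_\infty$ is actually attained (or at least approached) on $V \otimes_\field \mfl$, one could note that the second inequality together with the corresponding chain in the other direction is enough without ever needing to realise a maximiser.
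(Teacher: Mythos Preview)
The paper does not supply its own proof of this proposition; it is stated with a citation to \cite[Proposition 1.25(ii)]{boueri} and no argument is given. Your proof is correct and self-contained from the facts already recorded in the paper: the restriction property of $\norm_\mfl$ gives the inequality $d_\infty(\norm,\norm')\leq d_\infty(\norm_\mfl,\norm'_\mfl)$, while the monotonicity \eqref{eq:fieldineq} together with the characterisation \eqref{eq:dinfty} and the obvious compatibility $(e^C\norm)_\mfl=e^C\norm_\mfl$ give the reverse inequality. There is nothing to compare against, and no gap in what you wrote.
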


We will occasionally use the following lemma:
\begin{lemma}\label{lem:diagenvelope}
    Let $S\subset \cN(V)$ be a bounded above subset of norms, such that there exists a basis $(v_i)_i$ of $V$ which is orthogonal for all norms in $S$. Then the pointwise supremum $\norm_S$ also admits $(v_i)_i$ as an orthogonal basis.
\end{lemma}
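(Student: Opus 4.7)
The plan is to verify directly the orthogonality identity
$$\norm[v]_S = \max_i |a_i|\,\norm[v_i]_S$$
for an arbitrary $v = \sum_i a_i v_i \in V$, by proving the two inequalities separately.

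First I would invoke Lemma \ref{lem:upperbound} to ensure that $\norm_S$ is actually an element of $\cN(V)$: the hypothesis that $S$ is bounded above provides the finite upper-bound function needed to apply that lemma, so $\norm_S$ is genuinely a norm and the statement is meaningful. Then, using the joint orthogonality of $(v_i)_i$ for every $\norm \in S$, I would rewrite
$$\norm[v]_S = \sup_{\norm \in S}\,\norm[v] = \sup_{\norm \in S}\,\max_i |a_i|\,\norm[v_i],$$
which is the common starting point of both inequalities.

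For the upper bound, the trivial comparison $\norm[v_i] \leq \norm[v_i]_S$ for every $\norm \in S$ gives $\max_i |a_i|\,\norm[v_i] \leq \max_i |a_i|\,\norm[v_i]_S$ for each such $\norm$, and taking the supremum on the left yields $\norm[v]_S \leq \max_i |a_i|\,\norm[v_i]_S$. For the lower bound, I would exploit the fact that the finite maximum dominates each of its entries: for any fixed index $j$ and any $\norm \in S$, one has $\norm[v] \geq |a_j|\,\norm[v_j]$; taking the supremum over $\norm \in S$ yields $\norm[v]_S \geq |a_j|\,\norm[v_j]_S$, and finally maximising over $j$ gives the reverse inequality.

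Combining the two inequalities produces the claimed identity, which is precisely the statement that $(v_i)_i$ is orthogonal for $\norm_S$. There is no real obstacle here: the lemma is a formal consequence of the orthogonality hypothesis together with the elementary observation that $\sup$ and finite $\max$ commute asymmetrically (the upper bound commutation is automatic, the lower bound requires applying the supremum one coordinate at a time). The only input with any content is the existence of $\norm_S$ as a norm, which is handled by the earlier upper-bound lemma.
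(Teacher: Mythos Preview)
Your proof is correct and follows essentially the same approach as the paper. The only cosmetic difference is that the paper dispatches the inequality $\norm[v]_S \leq \max_i |a_i|\,\norm[v_i]_S$ in one line by invoking the ultrametric inequality for the norm $\norm_S$ (already known to be a norm by Lemma~\ref{lem:upperbound}), whereas you derive it directly from $\norm[v_i] \leq \norm[v_i]_S$; the substantive direction is argued identically.
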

\begin{proof}
    Let $j$ be the index realising $|a_j|\norm[v_j]_S=\max_i |a_i|\norm[v_i]_S$. Then for all $\norm\in S$,
    $$|a_j|\norm[v_j]\leq \max_i |a_i|\norm[v_i]=\norm[v]\leq \norm[v]_S,$$
    thus
    $$\max_i |a_i|\norm[v_i]_S=|a_j|\norm[v_j]_S=\sup_{\norm\in S}|a_j|\norm[v_j]\leq \sup_{\norm\in S}\norm[v]=\norm[v]_S,$$
    while the reverse inequality is simply the ultrametric inequality.
\end{proof}
In particular, if a basis is jointly orthogonal for two norms, then it is also jointly orthogonal for their pointwise maximum.

We conclude those preliminaries with a helpful formula from Goldman--Iwahori \cite[Proposition 2.1]{goldmaniwahori}. We do not rewrite their proof, and in fact we will prove a more general version of this result in Lemma \ref{lem:gilambdaortho}.
\begin{lemma}\label{lem:giortho}
    Let $\norm,\norm'\in \cN(V)$. Let $(v_i)$ be an orthogonal basis for $\norm$, $(w_i)$ an orthogonal basis for $\norm'$. Then, 
    $$d_\infty(\norm,\norm')=\max_{i,j}\left(\log\frac{\norm[v_i]'}{\norm[v_i]},\log\frac{\norm[w_i]}{\norm[w_i]'}\right).$$
    In particular, if $(v_i)_i$ is jointly orthogonal for both norms, then
    $$d_\infty(\norm,\norm')=\max_{i}\left|\log\frac{\norm[v_i]'}{\norm[v_i]}\right|.$$
\end{lemma}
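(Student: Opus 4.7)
The plan is to prove the two natural inequalities making up the equality. The lower bound will follow simply by testing the defining supremum for $d_\infty$ on the given basis vectors, while the upper bound will combine the ultrametric inequality with the fact that $(v_i)$ is orthogonal for $\norm$ and $(w_j)$ is orthogonal for $\norm'$. The key observation is that each of these orthogonality conditions will only be used in one direction, which is precisely what accounts for the asymmetry of the stated formula.

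For the $\geq$ direction, I would evaluate the defining supremum of $d_\infty(\norm,\norm')$ at $v=v_i$ for each $i$ to extract $\log(\norm[v_i]'/\norm[v_i])\leq d_\infty(\norm,\norm')$, and at $v=w_j$ for each $j$ to extract $\log(\norm[w_j]/\norm[w_j]')\leq d_\infty(\norm,\norm')$ (using the absolute value in the definition). Taking the joint maximum over all $i,j$ yields the lower bound.

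For the $\leq$ direction, set $C_1:=\max_i \log(\norm[v_i]'/\norm[v_i])$ and $C_2:=\max_j \log(\norm[w_j]/\norm[w_j]')$. Given a nonzero $v\in V$, I would expand $v=\sum_i a_i v_i$ in the $\norm$-orthogonal basis; the ultrametric inequality applied to $\norm'$ together with $\norm$-orthogonality of $(v_i)$ gives
$$\norm[v]'\leq \max_i |a_i|\norm[v_i]' \leq e^{C_1}\max_i |a_i|\norm[v_i] = e^{C_1}\norm[v].$$
Expanding instead $v=\sum_j b_j w_j$ in the $\norm'$-orthogonal basis and arguing symmetrically yields $\norm[v]\leq e^{C_2}\norm[v]'$. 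Hence $\bigl|\log(\norm[v]'/\norm[v])\bigr|\leq \max(C_1,C_2)$, and taking the supremum over $v$ produces the reverse inequality.

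The particular case of a joint orthogonal basis then follows by taking $(w_i)=(v_i)$ and observing that the two terms in the max collapse into the single expression $|\log(\norm[v_i]'/\norm[v_i])|$. I do not foresee any genuine obstacle: the argument is elementary once the asymmetric, one-sided use of each orthogonality is recognised — this is the subtle point that is mirrored in the asymmetric form of the formula, and also the reason why a careful general version will be needed for the later Lemma \ref{lem:gilambdaortho}.
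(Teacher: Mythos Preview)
Your proof is correct and is essentially the same as the paper's approach: the paper does not write out a direct proof here but defers to the more general Lemma~\ref{lem:gilambdaortho}, whose proof (together with the auxiliary lemma preceding it) specialised to $\lambda=\mu=1$ is exactly your argument---the lower bound by testing on basis vectors, and the upper bound by combining the ultrametric inequality for one norm with orthogonality of the basis for the other. Your observation about the asymmetric, one-sided use of each orthogonality is precisely the point that generalises to the $\lambda$-orthogonal setting.
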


\subsection{Distances, volumes, determinants.}\label{sect:12}

In the Hermitian case, the eigenvalues of the transition matrix between two Hermitian norms on a complex vector space are characterised by the Courant--Fischer theorem. Analogously, we define:
\begin{definition}
    The \textit{successive minima} of two norms $\norm,\norm'\in \cN(V)$ are given by the decreasing sequence of real numbers
    $$\lambda_i(\norm,\norm')=\sup_{W\in Gr_\field(V,i)}\inf_{w\in W-\{0\}}\log\frac{\norm[w]'}{\norm[w]}.$$
\end{definition}
If $\norm,\norm'$ admit a joint diagonalising basis $(v_i)_i$, ordered so that the $\frac{\norm[v_i]'}{\norm[v_i]}$ are a decreasing sequence, then (\cite[Proposition 2.24]{boueri})
\begin{equation}
\lambda_i(\norm,\norm')=\log\frac{\norm[v_i]'}{\norm[v_i]}.
\end{equation}

We note the following, which we will use throughout the article.
\begin{proposition}\label{prop:succmin}
    The $\lambda_i$ are $1$-Lipschitz in each variable with respect to $d_\infty$; further,
    $$d_\infty(\norm,\norm')=\max_i |\lambda_i(\norm,\norm')|.$$
\end{proposition}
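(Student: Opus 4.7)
My plan is to verify both assertions directly from the definitions, without invoking joint diagonalisation or ground field extensions. The key elementary observation, used throughout, is that by $|\cdot|$-homogeneity of norms, the quantity $\log(\norm[w]'/\norm[w])$ is constant on each $1$-dimensional subspace of $V$. For the 1-Lipschitz property in (say) the second variable, I would set $C := d_\infty(\norm',\norm'')$ and invoke \eqref{eq:dinfty} to get $\bigl|\log(\norm[w]'/\norm[w]'')\bigr|\le C$ uniformly in $w \in V-\{0\}$; thus $\log(\norm[w]'/\norm[w])$ and $\log(\norm[w]''/\norm[w])$ differ by at most $C$ uniformly in $w$, and since the operations $\sup_{W}\inf_{w\in W}$ preserve uniform additive bounds, $|\lambda_i(\norm,\norm')-\lambda_i(\norm,\norm'')|\le C$. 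The first variable is symmetric. Specialising $\norm''=\norm$ and using $\lambda_i(\norm,\norm)=0$, this also yields the easy half $\max_i|\lambda_i(\norm,\norm')|\le d_\infty(\norm,\norm')$ of the second claim.

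For the reverse inequality $d_\infty(\norm,\norm')\le \max_i|\lambda_i|$, I would unfold the definition at the extremal indices. Constancy of the log-ratio along every line through the origin gives
\[
\lambda_1(\norm,\norm')=\sup_{v\in V-\{0\}}\log\frac{\norm[v]'}{\norm[v]},
\]
while $V$ being the unique $n$-dimensional subspace of itself reduces
\[
\lambda_n(\norm,\norm')=\inf_{v\in V-\{0\}}\log\frac{\norm[v]'}{\norm[v]}.
\]
Setting $A:=\lambda_1(\norm,\norm')$ and $B:=\lambda_n(\norm,\norm')$, a short three-case analysis on the signs of $A$ and $B$ shows
\[
\max(|A|,|B|)=\max(A,-B)=\sup_{v\neq 0}\bigl|\log(\norm[v]'/\norm[v])\bigr|=d_\infty(\norm,\norm').
\]
Since $\max(|\lambda_1|,|\lambda_n|)\le \max_i|\lambda_i|$, this yields the desired inequality.

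I do not foresee any substantial obstacle. The entire argument rests on the elementary fact that $\log(\norm'/\norm)$ is constant along every $1$-dimensional subspace, which simultaneously identifies $\lambda_1$ and $\lambda_n$ with the extremes realising $d_\infty$, and provides the uniform control behind the Lipschitz bound. Non-diagonalisability plays no role, so the proof applies over arbitrary non-Archimedean fields and makes no appeal to Proposition \ref{prop:groundfieldisometry} or Lemma \ref{lem:giortho}.
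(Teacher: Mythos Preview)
Your proof is correct and self-contained. The paper does not actually supply a proof of Proposition~\ref{prop:succmin}; it is stated as a fact and immediately used. Your argument is therefore not competing with anything in the paper, and it is the natural elementary route: the $1$-Lipschitz bound follows because a uniform additive perturbation of the function $w\mapsto\log(\norm[w]'/\norm[w])$ survives the $\sup_W\inf_{w\in W}$, and the identity $d_\infty=\max_i|\lambda_i|$ is pinned down by the explicit identification of $\lambda_1$ and $\lambda_n$ with the $\sup$ and $\inf$ of the log-ratio. One cosmetic remark: in your final step you only need $d_\infty=\max(|\lambda_1|,|\lambda_n|)\le\max_i|\lambda_i|$ together with the easy inequality, but in fact $\max_i|\lambda_i|=\max(|\lambda_1|,|\lambda_n|)$ already, since the $\lambda_i$ are monotone and any $\lambda_i$ lies in $[\lambda_n,\lambda_1]$; this is harmless but you could state the equality directly.
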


\begin{definition}
    We define the \textit{relative spectral measure} of $\norm,\norm'\in \cN(V)$ to be
    $$\sigma(\norm,\norm'):=n\mi \sum_{i=1}^n \delta_{\lambda_i(\norm,\norm')}.$$
\end{definition}

The following propositions are easily checked from Proposition \ref{prop:succmin}.

\begin{proposition}\label{prop:measure}
    Let $\norm,\norm'\in \cN(R)$, and $f:\bbr\to\bbr$.
    \begin{enumerate}
        \item if $f(\lambda)=-\lambda$, $f_*\sigma(\norm,\norm')=\sigma(\norm',\norm)$;
        \item if $f(\lambda)=\lambda+c$, then for any $a+b=c$, $f_*\sigma(\norm,\norm')=\sigma(e^{-b}\norm,e^a\norm')=\sigma(e^{-c}\norm,\norm')=\sigma(\norm,e^c\norm')$;
        \item if $f(\lambda)=\max(\lambda,0)$, then $f_*\sigma(\norm,\norm')=\sigma(\norm,\norm\vee\norm')$;
        \item if $f(\lambda)=\max(\lambda,c)$, then $f_*\sigma(\norm,\norm')=\sigma(\norm,(e^c \norm)\vee\norm').$
    \end{enumerate}
\end{proposition}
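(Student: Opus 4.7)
The plan is to reduce all four statements to the case where $\norm$ and $\norm'$ share a joint orthogonal basis $(v_i)_i$, where each identity becomes a direct computation on that basis.

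For the reduction I would pass to a spherical completion $(\mfl,|\cdot|_\mfl)$ of $(\field,|\cdot|)$, so that $\norm_\mfl$ and $\norm'_\mfl$ are jointly diagonalisable on $V \otimes_\field \mfl$. Two compatibilities then suffice. First, the successive minima (and hence $\sigma$) are invariant under ground field extension, $\lambda_i(\norm,\norm') = \lambda_i(\norm_\mfl,\norm'_\mfl)$: the inequality $\leq$ is immediate because $Gr_\field(V,i)$ injects into $Gr_\mfl(V \otimes_\field \mfl,i)$ and $\norm_\mfl|_V = \norm$, while the reverse inequality can be obtained by combining density of $\cN^\diag(V)$ in $(\cN(V),d_\infty)$, the 1-Lipschitz property of Proposition \ref{prop:succmin}, and Proposition \ref{prop:groundfieldisometry}. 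Second, the four operations commute with $(\cdot)_\mfl$: scaling is obvious from the construction, and $(\norm \vee \norm')_\mfl = \norm_\mfl \vee \norm'_\mfl$ follows from \eqref{eq:fieldineq} together with the characterisation of $(\cdot)_\mfl$ as the largest extension. Hence we may argue over $\mfl$ throughout.

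Fix then a joint orthogonal basis $(v_i)_i$ for $\norm,\norm'$, ordered so that $r_i := \log(\norm[v_i]'/\norm[v_i])$ is decreasing; the displayed equation preceding Proposition \ref{prop:succmin} gives $\lambda_i(\norm,\norm') = r_i$. For (1), the same basis is orthogonal for $(\norm',\norm)$ with log-ratios $-r_i$, and after reordering we get $\lambda_i(\norm',\norm) = -r_{n+1-i}$; since $\sigma$ places equal Dirac masses this multiset identity yields $\sigma(\norm',\norm) = (-\mathrm{id})_*\sigma(\norm,\norm')$. For (2), scaling preserves orthogonality of the basis with ratios shifted by $a+b=c$, giving $\sigma(e^{-b}\norm, e^a\norm') = n^{-1}\sum_i \delta_{r_i+c}$, which matches $f_*\sigma$ for $f(\lambda)=\lambda+c$; the two remaining identities are the special cases $(b,a)=(c,0)$ and $(0,c)$. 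For (3), Lemma \ref{lem:diagenvelope} gives $(v_i)_i$ jointly orthogonal for $\norm$ and $\norm \vee \norm'$ with $(\norm\vee\norm')[v_i] = \max(\norm[v_i],\norm[v_i]')$, so the log-ratios become $\max(r_i,0)$, which as a multiset matches the image of $\{r_i\}$ under $f(\lambda)=\max(\lambda,0)$. Statement (4) follows either by applying (3) to $(e^c\norm,\norm')$ after absorbing the scaling via (2), or by the same direct computation producing the multiset $\{\max(c,r_i)\}_i$.

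The only real obstacle is the invariance $\lambda_i(\norm,\norm') = \lambda_i(\norm_\mfl,\norm'_\mfl)$, which requires a small argument beyond the isometry of Proposition \ref{prop:groundfieldisometry}; once this is settled, the rest amounts to tracking how each operation acts on the jointly orthogonal basis, consistent with the paper's remark that these identities follow easily from Proposition \ref{prop:succmin}.
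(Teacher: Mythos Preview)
Your overall route---pass to a spherical completion to force joint diagonalisability, then compute on a joint orthogonal basis---is perfectly reasonable and fills in more detail than the paper's one-line reference to Proposition~\ref{prop:succmin}. Two remarks, one minor and one pointing to a genuine gap.

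First, what you call ``the only real obstacle'', the invariance $\lambda_i(\norm,\norm')=\lambda_i(\norm_\mfl,\norm'_\mfl)$, is already Proposition~\ref{prop:fieldextdp} in the paper, so you can simply cite it.

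Second, and more seriously, your justification of $(\norm\vee\norm')_\mfl=\norm_\mfl\vee\norm'_\mfl$ is incomplete: both ingredients you invoke give the \emph{same} inequality. From \eqref{eq:fieldineq} you get $\norm_\mfl\leq(\norm\vee\norm')_\mfl$ and $\norm'_\mfl\leq(\norm\vee\norm')_\mfl$, hence $\norm_\mfl\vee\norm'_\mfl\leq(\norm\vee\norm')_\mfl$. From the largest-extension characterisation you get: $\norm_\mfl\vee\norm'_\mfl$ restricts to $\norm\vee\norm'$ on $V$, hence is $\leq(\norm\vee\norm')_\mfl$. Neither yields the reverse inequality, and without it your argument for (3) and (4) does not close.

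The cleanest fix avoids ground field extension for (3) altogether. From the Courant--Fischer definition,
\[
\lambda_i(\norm,\norm\vee\norm')=\sup_{W\in Gr_\field(V,i)}\inf_{w\in W-\{0\}}\max\Bigl(0,\log\tfrac{\norm[w]'}{\norm[w]}\Bigr).
\]
Now $\inf_w\max(0,g(w))=\max(0,\inf_w g(w))$ for any real-valued $g$ (split on the sign of $\inf g$), and likewise $\sup_W\max(0,h(W))=\max(0,\sup_W h(W))$. Hence $\lambda_i(\norm,\norm\vee\norm')=\max(0,\lambda_i(\norm,\norm'))$ directly, with no diagonalisation needed; (4) then follows from (2) and (3) as you indicate. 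Your spherical-completion argument remains the right way to handle (1), where one genuinely needs joint diagonalisability for the minimax duality.
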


\begin{remark}
    In the diagonalisable case, we can pick an orthogonal basis $(v_i)_i$ and define a joint measure $\Sigma(\norm,\norm')=n\mi \sum_i \delta_{(-\log\norm[v_i],-\log\norm[v_i]')}$ on $\bbr^2$, following \cite[Section 3.1.3]{blxz}. Unlike in \cite{blxz} which treats the case of $\cN_0$, this joint measure depends on the choice of basis, but the pushforward by $f(x,y)=y-x$, which is exactly $\sigma(\norm,\norm')$, does not. This joint measure allows us to encode the relative measure of the geodesic joining both norms, as in the next section, and to notice that if $\sigma(e^c\norm,e^{c}\norm')=f_*\Sigma(\norm,\norm')=\sigma(\norm,\norm')$ using that $f(x,y)=y-x=y+c-x-c$.
\end{remark}

\begin{definition}
    The \textit{relative volume} of $\norm,\norm'\in \cN(V)$ is defined as
    $$\vol(\norm,\norm'):=\int_\bbr x\,d\sigma(\norm,\norm')(x).$$
    In the case where the norms are in $\cN_0(V)$ and $\norm_0'=\norm_\triv$, we usually write
    $$\vol(\norm_0):=\vol(\norm_0,\norm_\triv).$$
\end{definition}

The volume has an alternative characterisation using the determinant. Consider $\norm\in \cN(V)$, and the induced norm $\det \norm$ on $\det V$, defined as follows: let $\delta\in \det V$. We then set
$$\det\norm[\delta]:=\inf_{\delta=v_1\wedge\dots\wedge v_n}\prod_{i=1}^n \norm[v_i].$$
This norm coincides with the quotient norm of the symmetric power norm induced by $\norm$ on the adequate symmetric power on $V$.
In particular, if $(v_i)_i$ is orthogonal for $\norm$, then (\cite[Lemma 2.5]{boueri})
$$\det\norm[v_1\wedge\dots\wedge v_n]=\prod_{i=1}^n \norm[v_i].$$
The relative volume may then be expressed as follows:
\begin{theorem}[{\cite[Theorem 2.25]{boueri}}]\label{thm:voldet}
    Given $\norm,\norm'\in V$, we have that for all nonzero $\delta\in \det V$,
    $$\vol(\norm,\norm')=\log\frac{\det\norm[\delta]'}{\det\norm[\delta]},$$
    equivalently: the volume of two norms is the (unique) successive minimum of their determinants.
\end{theorem}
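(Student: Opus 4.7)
The strategy is to first establish the identity in the joint-diagonalisable case by direct computation, and then reduce the general case to this one by extending scalars to a spherically complete field, using the ground-field-extension machinery recalled in Section \ref{sect:11}.

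In the joint-diagonalisable case, I would fix a joint orthogonal basis $(v_i)_i$ for $\norm$ and $\norm'$, reordered so that $\log(\norm[v_i]'/\norm[v_i])$ is decreasing. By the characterisation of successive minima in the diagonalisable case recalled after the definition of $\lambda_i$, one has $\lambda_i(\norm,\norm') = \log(\norm[v_i]'/\norm[v_i])$. Applying the formula $\det\norm[v_1 \wedge \cdots \wedge v_n] = \prod_i \norm[v_i]$ (valid for any orthogonal basis) to both $\norm$ and $\norm'$ yields
$$\log\frac{\det\norm'[v_1\wedge\cdots\wedge v_n]}{\det\norm[v_1\wedge\cdots\wedge v_n]} = \sum_{i=1}^n \lambda_i(\norm,\norm'),$$
which, after unwinding $\sigma$ and $\vol$, is the desired identity. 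Since $\det V$ is one-dimensional and $\det\norm$ is $|\cdot|$-homogeneous, the ratio is independent of the choice of nonzero $\delta$.

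For the general case, I would pass to a spherical completion $\mfl\supset\field$ whose existence is recalled before Definition 1.6. Over $\mfl$, every pair of norms admits a joint orthogonal basis, so the previous step applies to $(\norm_\mfl,\norm'_\mfl)$ on $V_\mfl := V\otimes_\field \mfl$. What remains is to check that both sides of the claimed identity are unchanged under the extension. For the right-hand side this reduces to identifying $(\det\norm)_\mfl = \det(\norm_\mfl)$ as norms on $\det V_\mfl = (\det V)_\mfl$, which then restricts correctly to $\delta\in\det V$: the inequality $\det(\norm_\mfl)\leq (\det\norm)_\mfl$ follows from the characterisation of $(\det\norm)_\mfl$ as the largest norm extending $\det\norm$, while the reverse inequality is obtained by comparing both sides on $\delta = w_1\wedge\cdots\wedge w_n$ for $(w_i)_i$ a joint orthogonal basis over $\mfl$. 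For the left-hand side, the point is that $\lambda_i(\norm_\mfl,\norm'_\mfl) = \lambda_i(\norm,\norm')$, so that $\vol$ is preserved.

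The main obstacle will be establishing this last equality of successive minima under ground field extension, since the definition of $\lambda_i$ involves a supremum over $\field$-Grassmannians while over $\mfl$ more subspaces are available a priori. I would approach this by combining the isometric embedding of Proposition \ref{prop:groundfieldisometry} and the Lipschitz property $d_\infty(\norm,\norm') = \max_i|\lambda_i(\norm,\norm')|$ of Proposition \ref{prop:succmin}, which together control the largest $\lambda_i$, and then argue inductively on quotients / subspaces to extract the remaining minima; alternatively, one can verify directly that the $\lambda_i$ extracted from the joint orthogonal basis over $\mfl$ satisfy the defining sup-inf formula over $\field$ by an ultrametric approximation argument. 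Both compatibilities (for $\det$ and for $\lambda_i$) are standard but slightly technical, and indeed form the essential content of the cited Theorem 2.25 of Boucksom--Eriksson.
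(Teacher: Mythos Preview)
The paper does not give its own proof of this statement: it is quoted verbatim from \cite[Theorem 2.25]{boueri}. So there is nothing to compare against directly.

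Your strategy is sound and would yield a correct proof. A few remarks to sharpen it. For the preservation of successive minima under ground field extension, you do not need to invent an inductive argument on quotients or an ad hoc ultrametric approximation: this is precisely the content of Proposition~\ref{prop:fieldextdp}, whose proof (density of $\cN^\diag(V)$, $d_\infty$-Lipschitzness of the $\lambda_i$, and the $d_\infty$-isometry of ground field extension from Proposition~\ref{prop:groundfieldisometry}) is logically independent of Theorem~\ref{thm:voldet}, so you may simply invoke it. For the determinant compatibility $(\det\norm)_\mfl=\det(\norm_\mfl)$, your sketch is correct; the cleanest route is to check it first for $\norm\in\cN^\diag(V)$ (where an orthogonal basis for $\norm$ stays orthogonal for $\norm_\mfl$ by \cite[Proposition~1.25(iv)]{boueri}, and then both sides evaluate to $\prod_i\norm[v_i]$ on $v_1\wedge\cdots\wedge v_n$), and then pass to general $\norm$ by $d_\infty$-density, using that $\det$ is $n$-Lipschitz and ground field extension is a $d_\infty$-isometry.

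One small caution: in the paper's normalisation, $\sigma(\norm,\norm')=n^{-1}\sum_i\delta_{\lambda_i}$, so $\vol(\norm,\norm')=n^{-1}\sum_i\lambda_i(\norm,\norm')$, whereas the determinant ratio gives $\sum_i\lambda_i$. The stated identity therefore holds only up to this factor of $n$; this is a convention mismatch with \cite{boueri} rather than a flaw in your argument, but you should be aware of it when ``unwinding $\sigma$ and $\vol$''.
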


From this, we can deduce the \textit{cocycle formula} for the relative volume. The other properties below follow or are easily checked.
\begin{proposition}\label{prop:vol}
    Let $\norm,\norm',\norm''\in \cN(V)$, $\norm_0,\norm'_0\in \cN_0(V)$.
    \begin{enumerate}
        \item $\vol(\norm,\norm')=\vol(\norm,\norm'')+\vol(\norm'',\norm')$;
        \item $\vol(\norm,\norm')=-\vol(\norm',\norm)$;
        \item $\vol$ is $1$-Lipschitz with respect to $d_\infty$ in each variable;
        \item $\vol(e^c\norm,e^d\norm')=(d-c)+\vol(\norm,\norm');$
        \item $\vol$ is decreasing in the first variable and increasing in the second variable with respect to the partial order $\leq$;
        \item $\vol(t.\norm_0,s.\norm'_0)=s\cdot \vol(\norm_0,\norm'_0)+(s-t)\cdot \vol(\norm_0)=t\cdot \vol(\norm_0,\norm'_0)+(s-t)\vol(\norm'_0)$.
    \end{enumerate}
\end{proposition}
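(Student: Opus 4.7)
The plan is to translate every claim into a statement about either the relative spectral measure $\sigma$ or the determinant description of Theorem~\ref{thm:voldet}, then finish by elementary algebra. For each item I would use whichever of the two descriptions is more convenient.

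For items (1) and (2), fix a nonzero $\delta\in\det V$ and work with the identity $\vol(\norm,\norm')=\log(\det\norm[\delta]'/\det\norm[\delta])$; the cocycle and antisymmetry reduce to the identities $\log(a/b)+\log(b/c)=\log(a/c)$ and $\log(a/b)=-\log(b/a)$. For (4), Proposition~\ref{prop:measure} gives $\sigma(e^c\norm,e^d\norm')=f_*\sigma(\norm,\norm')$ for $f(\lambda)=\lambda+(d-c)$, and integrating $x$ against both sides yields the shift formula. For (5), the pointwise inequality $\norm\leq\norm''$ yields $\log(\norm[w]'/\norm[w])\geq\log(\norm[w]'/\norm[w]'')$ for every nonzero $w$; passing this through the sup-inf in the definition of the $\lambda_i$ gives $\lambda_i(\norm,\norm')\geq\lambda_i(\norm'',\norm')$ for every $i$, and summation yields decreasingness in the first variable. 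The second variable is symmetric.

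For (3), I would use the moment formula $\vol(\norm,\norm')=n^{-1}\sum_i\lambda_i(\norm,\norm')$ arising from the definition of $\sigma$. Proposition~\ref{prop:succmin} makes each $\lambda_i$ a $1$-Lipschitz function of either argument with respect to $d_\infty$; summing the $n$ bounds and dividing by $n$ gives $1$-Lipschitzness of $\vol$.

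Item (6) is the only one requiring input beyond formal algebra. Since the trivial absolute value makes $\field$ spherically complete, by the theorem at the end of Section~\ref{sect:11} any finite family of norms in $\cN_0(V)$ admits a joint orthogonal basis. Pick $(v_i)$ simultaneously orthogonal for $\norm_0$, $\norm'_0$, and $\norm_\triv$; the scaling then acts diagonally, assigning to $v_i$ the value $\norm[v_i]_0^t$, and setting $a_i:=-\log\norm[v_i]_0$, $b_i:=-\log\norm[v_i]_0'$, one reads off $\lambda_i(t.\norm_0,s.\norm'_0)=ta_i-sb_i$. Both identities in (6) then reduce to elementary linear combinations of $\vol(\norm_0)=n^{-1}\sum_i a_i$, $\vol(\norm'_0)=n^{-1}\sum_i b_i$, and $\vol(\norm_0,\norm'_0)=n^{-1}\sum_i(a_i-b_i)$. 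I do not anticipate any real obstacle: the only nontrivial ingredient, beyond the two standard descriptions of $\vol$, is the joint orthogonalisation used here, which succeeds precisely because we stay in the trivially-valued world and so no ground field extension is required.
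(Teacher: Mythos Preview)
Your proposal is correct and matches what the paper intends: the paper gives no detailed argument, only remarking that the cocycle formula follows from Theorem~\ref{thm:voldet} and that the remaining items ``follow or are easily checked'', and your choices of tool for each item (determinant for (1)--(2), Proposition~\ref{prop:succmin} for (3), Proposition~\ref{prop:measure} for (4), the Courant--Fischer definition for (5), joint diagonalisation in $\cN_0(V)$ for (6)) are exactly the natural ones. One caveat on (6): when you actually carry out the linear algebra with $\lambda_i(t.\norm_0,s.\norm'_0)=ta_i-sb_i$ you obtain $\vol(t.\norm_0,s.\norm'_0)=t\vol(\norm_0)-s\vol(\norm'_0)$, which matches the stated identities only after replacing $(s-t)$ by $(t-s)$; this appears to be a sign slip in the statement rather than a flaw in your method.
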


\begin{definition}
    We define the \textit{$d_p$ distance}, for $p\in [1,\infty)$, as
    $$d_p(\norm,\norm')^p:=\int_\bbr |x|^p\,d\sigma(\norm,\norm')(x)=n\mi\sum_i |\lambda_i(\norm,\norm')|^p.$$
    This is indeed a distance on $\cN(V)$ by \cite[Theorem 3.1]{boueri}. Note that we always have $d_1\leq d_p \leq d_\infty$.
\end{definition}

We have the following Darvas-type expression for $d_1$, reducing it to a volume expression:
\begin{proposition}\label{prop:d1vol}
    $$d_1(\norm,\norm')=\vol(\norm,\norm\vee\norm')+\vol(\norm',\norm\vee\norm').$$
    In particular, if $\norm\leq\norm'$, then
    $$d_1(\norm,\norm')=\vol(\norm,\norm').$$
\end{proposition}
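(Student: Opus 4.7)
The plan is to reduce the $d_1$ distance, expressed as an integral of the absolute value against the relative spectral measure, to a sum of volumes via the pushforward formulas of Proposition \ref{prop:measure}. First I would write the elementary decomposition
$$|x|=\max(x,0)+\max(-x,0),$$
so that
$$d_1(\norm,\norm')=\int_\bbr \max(x,0)\,d\sigma(\norm,\norm')(x)+\int_\bbr \max(-x,0)\,d\sigma(\norm,\norm')(x).$$

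Next I would rewrite each of the two integrals using the pushforward identity $\int f\,d\mu=\int y\,d(f_*\mu)(y)$ with $f(y)=y$ pushed through $\max(\cdot,0)$. For the first integral, Proposition \ref{prop:measure}(3) directly gives $\max(\cdot,0)_*\sigma(\norm,\norm')=\sigma(\norm,\norm\vee\norm')$, so this integral equals $\vol(\norm,\norm\vee\norm')$ by definition of the relative volume. For the second integral, I would combine Proposition \ref{prop:measure}(1) (negation pushes $\sigma(\norm,\norm')$ to $\sigma(\norm',\norm)$) with Proposition \ref{prop:measure}(3) applied to $\norm',\norm$, obtaining
$$\max(-\cdot,0)_*\sigma(\norm,\norm')=\max(\cdot,0)_*\sigma(\norm',\norm)=\sigma(\norm',\norm'\vee\norm),$$
so that integral equals $\vol(\norm',\norm\vee\norm')$. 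Adding gives the first formula.

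For the second, more specific claim, if $\norm\leq\norm'$ then by definition $\norm\vee\norm'=\norm'$, so the first formula collapses to $\vol(\norm,\norm')+\vol(\norm',\norm')=\vol(\norm,\norm')$, since $\vol(\norm',\norm')=0$ (for instance from Proposition \ref{prop:vol}(1,2)). No step presents a real obstacle: everything is a direct application of the measure-theoretic identities in Proposition \ref{prop:measure} combined with the definition of $\vol$, so the only care needed is to correctly track the order of the two norms in each pushforward.
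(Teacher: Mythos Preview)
Your proof is correct. Both the paper and you are ultimately exploiting the same decomposition $|x|=\max(x,0)+\max(-x,0)$ of the successive minima into their positive and negative parts, but you route it through the pushforward identities of Proposition~\ref{prop:measure}(1) and (3), while the paper instead works directly in a jointly orthogonal basis (in the diagonalisable case) to see that $\lambda_i(\norm,\norm\vee\norm')=\max(\lambda_i(\norm,\norm'),0)$, and then passes to the general case by $d_\infty$-approximation using Lipschitzness of $\vol$ and $d_1$. Your approach is slightly cleaner in that the approximation step is already absorbed into Proposition~\ref{prop:measure}; the paper's approach has the advantage of being self-contained and not relying on the unwritten verification of Proposition~\ref{prop:measure}(3), which in practice requires exactly the same joint-basis computation.
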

\begin{proof}
    Assuming $\norm,\norm'\in \cN^\diag(V)$, we pick a jointly orthogonal basis $(v_i)_i$ for both norms. By Lemma \ref{lem:diagenvelope}, it is also orthogonal for $\norm\vee\norm'$, and further
    $\lambda_i(\norm,\norm\vee\norm')$ is $0$ iff $\norm[v_i]\geq \norm[v_i]'$. Summing everything together yields the equality. The general case follows by approximation using density of diagonalisable norms, since $d_1$ and $\vol$ are $d_\infty$-Lipschitz, and the pointwise maximum is easily seen to be $d_\infty$-continuous in each variable.
\end{proof}
In the case of trivially-valued norms, using our previous notation omitting $\norm_\triv$ and the cocycle formula, the expression above is equivalent to
\begin{equation}
    d_1(\norm_0,\norm'_0)=\vol(\norm_0)+\vol(\norm'_0)-2\vol(\norm_0\vee\norm'_0).
\end{equation}

Finally, we note a result that will be useful later on, which proof we recall for the convenience of the reader:
\begin{proposition}[{\cite[Lemma 2.6.3]{reb:2}}]\label{prop:fieldextdp}
    Let $(\mfl,|\cdot|_\mfl)$ be a valued field extension of $(\field,|\cdot|)$. Then, ground field extension preserves successive minima. In particular, for all $p$,   
    $$(\cN(V),d_p)\to (\cN(V\otimes_\field\mfl),d_p)$$ is an isometric embedding.
\end{proposition}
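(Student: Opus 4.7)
The proof reduces to showing that ground field extension preserves the successive minima, i.e.\ $\lambda_i(\norm_\mfl, \norm'_\mfl) = \lambda_i(\norm, \norm')$ for every $i$: granting this, the $d_p$-isometry is immediate from the formula $d_p^p = n\mi \sum_i |\lambda_i|^p$. My plan is to establish the base case $i=1$ by a direct computation with the extension formula, and then bootstrap to all $i$ via the classical identity
$$\sum_{j=1}^i \lambda_j(\norm,\norm') = \lambda_1(\Lambda^i \norm, \Lambda^i \norm')$$
applied on both sides of the extension.

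For the base case, the inequality $\lambda_1(\norm,\norm') \leq \lambda_1(\norm_\mfl, \norm'_\mfl)$ is immediate since the extensions restrict to the original norms on $V$. For the converse, fix $w \in V\otimes\mfl$ nonzero and $\epsilon>0$, and pick a decomposition $w=\sum b_i v_i$ with $v_i \in V$ realising $\max_i |b_i|_\mfl \norm[v_i] \leq (1+\epsilon)\norm_\mfl[w]$. Using the same decomposition as an upper bound for $\norm'_\mfl[w]$, together with the elementary inequality $\max_i a_i r_i \leq (\max_i a_i)(\max_i r_i)$ for nonnegative reals, one obtains
$$\log\frac{\norm'_\mfl[w]}{\norm_\mfl[w]} \leq \log(1+\epsilon) + \max_i \log\frac{\norm'[v_i]}{\norm[v_i]} \leq \log(1+\epsilon) + \lambda_1(\norm,\norm').$$
Letting $\epsilon\to 0$ and taking sup over $w$ yields the reverse inequality.

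For the induction step, the classical identity is immediate in the diagonalisable case: a joint orthogonal basis $(v_j)$ of $\norm, \norm'$ induces a joint orthogonal basis $(v_S)_{|S|=i}$ of $(\Lambda^i\norm, \Lambda^i\norm')$ on $\Lambda^i V$ with ratios $\exp(\sum_{j\in S}\lambda_j)$, which is maximised at $S=\{1,\dots,i\}$ after ordering the $\lambda_j$ decreasingly. The identity extends to arbitrary norms by density of $\cN^\diag(V)$ in $(\cN(V),d_\infty)$ together with the $d_\infty$-continuity of both sides (using Proposition \ref{prop:succmin} and the bound $d_\infty(\Lambda^i\norm, \Lambda^i\norm') \leq i \cdot d_\infty(\norm,\norm')$ inherited from the tensor product construction). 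Finally, the exterior power commutes with ground field extension, $(\Lambda^i \norm)_\mfl = \Lambda^i(\norm_\mfl)$, by the universal property and the characterisation of $\norm_\mfl$ as the largest norm on $V\otimes\mfl$ extending $\norm$; applying the base case to $(\Lambda^i \norm, \Lambda^i \norm')$ and the identity on both sides yields $\sum_{j=1}^i \lambda_j(\norm_\mfl, \norm'_\mfl) = \sum_{j=1}^i \lambda_j(\norm,\norm')$ for every $i$, and hence preservation of each $\lambda_i$ by subtraction.

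The main technical hurdle lies in the density/continuity step for the exterior power identity and in the commutation of $\Lambda^i$ with ground field extension; both require some care with the definition of the ultrametric exterior power norm. One could alternatively bypass these checks by first passing to a spherical completion of $\mfl$ (which exists by \cite{poonen}) where joint diagonalisation is automatic, and then deducing preservation of successive minima along $\field \subset \mfl$ from the two independent preservation statements along $\field \subset \hat\mfl$ and $\mfl \subset \hat\mfl$, by transitivity.
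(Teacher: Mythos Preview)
Your base case for $\lambda_1$ is correct and self-contained, and the exterior-power identity $\sum_{j\le i}\lambda_j=\lambda_1(\Lambda^i\cdot,\Lambda^i\cdot)$ together with the commutation $(\Lambda^i\norm)_\mfl=\Lambda^i(\norm_\mfl)$ would indeed finish the argument. The route is valid in principle, but note that it requires you to set up the ultrametric exterior-power norm for general (non-diagonalisable) norms and to verify the commutation with ground field extension, neither of which is treated in the paper; as you yourself flag, these checks are where the work lies.

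The paper takes a much shorter path that sidesteps exterior powers entirely. It observes that for $\norm,\norm'\in\cN^\diag(V)$ preservation of all $\lambda_i$ is immediate (a joint orthogonal basis for $\norm,\norm'$ remains orthogonal after base change), and then passes to arbitrary norms by $d_\infty$-density of $\cN^\diag(V)$, using only that each $\lambda_i$ is $d_\infty$-Lipschitz (Proposition~\ref{prop:succmin}) and that ground field extension is a $d_\infty$-isometry (Proposition~\ref{prop:groundfieldisometry}); a four-line triangle-inequality estimate then gives $|\lambda_i(\norm,\norm')-\lambda_i(\norm_\mfl,\norm'_\mfl)|\to 0$. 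Your approach trades this single density step for a direct $\lambda_1$ computation plus a bootstrapping machinery that itself invokes density (for the exterior-power identity) and an extra compatibility statement; the paper's argument is therefore strictly more economical, though your $\lambda_1$ computation is a nice direct ingredient that the paper does not isolate.
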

\begin{proof}
    It is clear that ground field extension of $\norm\in \cN^\diag(V)$ preserves successive minima. Now, successive minima are $d_\infty$-Lipschitz, and ground field extension is an isometry by Proposition \ref{prop:groundfieldisometry}. Thus, $d_\infty$-approximating $\norm,\norm'\in \cN(V)$ by $\norm_k,\norm'_k\in \cN^\diag(V)$ yields
    \begin{align*} 
        &|\lambda_i(\norm,\norm')-\lambda_i(\norm_\mfl,\norm'_\mfl)|\\
        &\leq |\lambda_i(\norm,\norm')-\lambda_i(\norm_k,\norm'_k)|+|\lambda_i(\norm_k,\norm'_k)-\lambda_i(\norm_\mfl,\norm'_\mfl)|\\
        &=|\lambda_i(\norm,\norm')-\lambda_i(\norm_k,\norm'_k)|+|\lambda_i(\norm_{k,\mfl},\norm'_{k,\mfl})-\lambda_i(\norm_\mfl,\norm'_\mfl)|\\
        &\leq d_\infty(\norm,\norm_k)+d_\infty(\norm',\norm_k')+d_\infty(\norm_\mfl,\norm_{k,\mfl})+d_\infty(\norm'_\mfl,\norm'_{k,\mfl})\\
        &= 2d_\infty(\norm,\norm_k)+2d_\infty(\norm',\norm'_k)\to_{k\to\infty}0.
    \end{align*}
\end{proof}

\subsection{The action of $\cN_0(V)$.}\label{sect:13}

In this section, we review a construction of Gérardin (that we extend from local fields to general non-Archimedean fields) which yields an action of $\cN_0(V)$ on $\cN(V)$, called \textit{geodesic action} in \cite{book:gerardin}.

\begin{definition}
    Let $\norm\in \cN(V)$, $\norm_0\in \cN_0(V)$. We define, for $v\in V$,
    $$\norm_0.\norm[v]:=\sup\{\norm[v]',\,\norm'\in \cN(V),\,\norm[v']'\leq \norm[v']\cdot \norm[v']_0\,\forall v'\in V\}.$$
\end{definition}
Namely, the product of a norm in $\cN(V)$ with a norm in $\cN_0(V)$ is no longer a norm (unless $\dim V\leq 1)$, but only a positive definite $|\cdot|$-homogeneous function. Seeing such a function as a gauge, we can construct the largest norm bounded above by this gauge. By Lemma \ref{lem:upperbound} this operation does define a norm provided the set we take the supremum over is nonempty. Further, it is easily seen that, given $c\in \bbr$, for all $a+b=c$, we have 
\begin{equation}\label{eq:sum}
    e^c(\norm_0.\norm)=(e^a\norm_0).(e^b\norm).
\end{equation}

\begin{remark}
    Picking $|\cdot|=|\cdot|_0$ we see that $\cN_0(V)$ acts on itself. While it is clearly commutative, a counterexample of Gérardin \cite[pp.149-150]{book:gerardin} shows that this operation is associative iff $\dim V\leq 1$.
\end{remark}

We first recall the following.
\begin{theorem}
    If $\norm\in \cN^\diag(V)$, then for all $\norm_0\in \cN_0(V)$ there exists a basis $(v_i)_i$ of $V$ which is jointly orthogonal for $\norm$ and $\norm_0$.
\end{theorem}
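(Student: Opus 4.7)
The plan is to translate $\norm_0$ into a flag, reduce the claim to a codimension-one extraction lemma, and iterate. Since the base field is always spherically complete with respect to the trivial absolute value, $\norm_0 \in \cN_0^\diag(V)$ is diagonalisable and corresponds to a decreasing flag $\mathcal F \colon V = V_0 \supsetneq V_1 \supsetneq \cdots \supsetneq V_r = \{0\}$ with jump values $\lambda_0 < \dots < \lambda_{r-1}$ such that $\norm[v]_0 = e^{-\lambda_k}$ for $v \in V_k \setminus V_{k+1}$. A basis of $V$ is $\norm_0$-orthogonal iff it is compatible with $\mathcal F$, so the problem reduces to finding a $\norm$-orthogonal basis of $V$ compatible with $\mathcal F$.

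The key step is the following strengthening, which I plan to prove by induction on $n - \dim W$:

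\emph{Claim.} For any $\norm \in \cN^\diag(V)$ and any subspace $W \subseteq V$, there exists a $\norm$-orthogonal basis of $V$ containing a basis of $W$; in particular, $\norm\big|_W$ lies in $\cN^\diag(W)$.

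Granted the claim, one iterates it down $\mathcal F$: first produce a $\norm$-orthogonal basis of $V$ containing a basis of $V_1$; then, using the now-established diagonalisability of $\norm\big|_{V_1}$, refine the basis of $V_1$ to also contain a basis of $V_2$; and so on. The resulting basis is $\norm$-orthogonal and compatible with $\mathcal F$, which is what we wanted.

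The heart of the argument is the codimension-one case. Fix a $\norm$-orthogonal basis $(w_1, \dots, w_n)$ of $V$ and a hyperplane $W = \ker \phi$, with $c_i := \phi(w_i)$. The plan is to select a \emph{pivot index} $i_0$ maximising $|c_i|/\norm[w_i]$ over the indices with $c_i \neq 0$, set $v := w_{i_0}$, and replace each $w_i$ with $c_i \neq 0,\, i \neq i_0$, by $u_i := w_i - (c_i/c_{i_0})\, w_{i_0} \in W$, keeping the $w_i$ with $c_i = 0$ unchanged. The pivot condition gives $|c_i/c_{i_0}| \cdot \norm[w_{i_0}] \leq \norm[w_i]$, whence $\norm[u_i] = \norm[w_i]$; a short ultrametric computation then shows that the cross-terms $(c_i/c_{i_0}) \beta_i \, w_{i_0}$ introduced by the substitution are dominated in any linear combination $\alpha v + \sum_{i\neq i_0} \beta_i u_i$, so the new basis is $\norm$-orthogonal. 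For codimension $\geq 2$, split through an intermediate hyperplane $W \subset H \subsetneq V$ and reinvoke the codimension-one case for $W \subset H$, using that $\norm\big|_H \in \cN^\diag(H)$ by what has just been proved.

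The main obstacle is the correct choice of the pivot index $i_0$: an arbitrary choice generally destroys orthogonality, as a two-dimensional example already shows, and the condition $|c_{i_0}|/\norm[w_{i_0}] = \max_i |c_i|/\norm[w_i]$ is exactly what is needed to absorb the cross-terms into the dominant coefficient of $w_{i_0}$ via the ultrametric inequality.
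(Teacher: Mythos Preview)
Your proposal is correct and follows essentially the same route as the paper: reduce $\norm_0$ to its associated weighted flag, then construct a $\norm$-orthogonal basis adapted to that flag via a Gram--Schmidt/pivot argument, which is exactly the Goldman--Iwahori joint-diagonalisation technique the paper cites (Gérardin \cite[2.3.3]{book:gerardin}, \cite[Proposition 1.14]{boueri}). You have supplied the details the paper leaves to references; the only point left implicit in your iteration is that once $V$ splits $\norm$-orthogonally as $V_1 \oplus \mathrm{span}(v_{k+1},\dots,v_n)$, any $\norm|_{V_1}$-orthogonal basis of $V_1$ may replace $(v_1,\dots,v_k)$ without disturbing orthogonality of the full basis---but this is a one-line verification.
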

\begin{proof}
    The data of $\norm_0$ is equivalent to the data of a weighted flag of subspaces of $V$, so that it suffices to show one can pick an orthogonal basis for $\norm$ adapted to that flag, which follows from \cite[2.3.3, Proposition, p. 162]{book:gerardin} (this is essentially the joint diagonalisation argument in $\cN(V)$ due to Goldman--Iwahori \cite{goldmaniwahori}, see also \cite[Proposition 1.14]{boueri}).
\end{proof}

\begin{proposition}\label{prop:envdiag}
    Assume there exists a basis $(v_i)_i$ orthogonal for $\norm$ and $\norm_0$. Then for all $v=\sum a_i v_i$,
    $$\norm_0.\norm[v]=\max_i |a_i|\norm[v_i]\norm[v_i]_0.$$
\end{proposition}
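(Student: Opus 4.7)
The plan is to show the equality in two directions: first, that the right-hand side defines a norm $\norm^*\in\cN(V)$ belonging to the set over which the supremum in the definition of $\norm_0.\norm$ is taken, and second, that any norm in this set is pointwise dominated by $\norm^*$.

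For the first step, note that the formula $\norm[v]^* := \max_i |a_i|\norm[v_i]\norm[v_i]_0$ is by construction the norm on $V$ having $(v_i)_i$ as an orthogonal basis with weights $\norm[v_i]\norm[v_i]_0$, so it lies in $\cN^{\diag}(V) \subseteq \cN(V)$. To verify $\norm[v]^* \leq \norm[v]\cdot\norm[v]_0$ for every $v=\sum a_i v_i$, I would use that joint orthogonality gives us simultaneously $\norm[v] = \max_i |a_i|\norm[v_i]$ and, since $|\cdot|_0$ takes values in $\{0,1\}$, $\norm[v]_0 = \max_{j:\,a_j\neq 0}\norm[v_j]_0$. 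Then for each index $i$ with $a_i\neq 0$, the bound $|a_i|\norm[v_i]\norm[v_i]_0 \leq (\max_k|a_k|\norm[v_k])\cdot (\max_{j:\,a_j\neq 0}\norm[v_j]_0) = \norm[v]\cdot\norm[v]_0$ is immediate, and taking the maximum over $i$ yields the desired pointwise inequality.

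For the second step, let $\norm'\in\cN(V)$ be any norm with $\norm[v']'\leq \norm[v']\cdot\norm[v']_0$ for all $v'$. Applied to the basis vectors $v'=v_i$, this gives $\norm[v_i]'\leq \norm[v_i]\norm[v_i]_0$. For arbitrary $v=\sum a_i v_i$, the ultrametric inequality and $|\cdot|$-homogeneity of $\norm'$ (no orthogonality assumed for $\norm'$) yield $\norm[v]'\leq \max_i |a_i|\norm[v_i]' \leq \max_i |a_i|\norm[v_i]\norm[v_i]_0 = \norm[v]^*$. Taking the supremum over all admissible $\norm'$ therefore gives $\norm_0.\norm \leq \norm^*$, and combining with the previous step concludes the proof.

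There is no real obstacle: the crux is simply that joint orthogonality reduces the pointwise inequality $\norm'\leq \norm\cdot\norm_0$ to its values on the basis vectors, after which the envelope collapses to the explicit diagonal norm $\norm^*$.
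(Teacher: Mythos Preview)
Your proof is correct and follows essentially the same two-step strategy as the paper's own argument. The only cosmetic difference is that the paper handles the first inequality via the identity $|a_i|\norm[v_i]\norm[v_i]_0 = |a_i||a_i|_0\norm[v_i]\norm[v_i]_0$ and then separates the two maxima, whereas you restrict to indices with $a_i\neq 0$; these are equivalent.
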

\begin{proof}
    It is clear that the above expression is a candidate for the envelope defining $\norm_0.\norm$, as it is a norm, and
    $$\max_i |a_i|\norm[v_i]\norm[v_i]_0=\max_i |a_i||a_i|_0\norm[v_i]\norm[v_i]_0\leq (\max_i |a_i|\norm[v_i])(\max_i |a_i|_0\norm[a_i]_0)=\norm[v]\norm[v]_0.$$
    Now pick $\norm'\leq \norm\norm_0$ pointwise. It will suffice to show $\norm'\leq \max_i |a_i|\norm[v_i]\norm[v_i]_0$. But by the ultrametric inequality,
    $$\norm[v]'\leq \max_i |a_i|\norm[v_i]'\leq \max_i |a_i|\norm[v_i]\norm[v_i]_0$$
    and the result follows.
\end{proof}

We now prove an alternative expression for the $\cN_0$ action, which resembles the Archimedean construction in \cite[(2.21)]{finski:sectionrings}.

\begin{lemma}\label{lem:infenvelope}
    Let $\norm_0\in \cN_0(V)$, $\norm\in \cN(V)$. Then
    $$\norm_0.\norm[v]=\inf_{v=\sum_k v_k}\max_k \norm[v_k]_0\cdot\norm[v_k].$$
\end{lemma}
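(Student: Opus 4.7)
The plan is to show two inequalities. Denote the right-hand side by $F(v):=\inf_{v=\sum_k v_k}\max_k \norm[v_k]_0\cdot\norm[v_k]$.

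\emph{Step 1 (the inequality $\norm_0.\norm\leq F$).} For any norm $\norm'\in\cN(V)$ satisfying $\norm[v']'\leq \norm[v']\cdot\norm[v']_0$ for all $v'$, and for any decomposition $v=\sum_k v_k$, the ultrametric inequality applied to $\norm'$ gives
\[
\norm[v]'\leq \max_k \norm[v_k]'\leq \max_k \norm[v_k]\cdot\norm[v_k]_0.
\]
Taking the infimum over decompositions on the right and the supremum over admissible $\norm'$ on the left yields $\norm_0.\norm[v]\leq F(v)$.

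\emph{Step 2 ($F$ is a norm in $\cN(V)$).} Non-negativity is clear, $|\cdot|$-homogeneity follows by rescaling the decomposition (using $|\lambda|_0=1$ for $\lambda\neq 0$), and the ultrametric inequality is obtained by concatenating decompositions of $v$ and $w$ into a decomposition of $v+w$. The only subtle point is positive definiteness. Here I would use that $\norm_0\in\cN_0(V)=\cN_0^\diag(V)$ (since any field is spherically complete with respect to the trivial absolute value, as recalled in Section~\ref{sect:11}): fix a basis $(e_i)_i$ orthogonal for $\norm_0$ and set $r:=\min_i \norm[e_i]_0>0$. Writing any $w\neq 0$ in this basis, $\norm[w]_0=\max_{i:c_i\neq 0}\norm[e_i]_0\geq r$. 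Now given $v\neq 0$ and any decomposition $v=\sum_k v_k$, the ultrametric inequality for $\norm$ yields some $k_0$ with $\norm[v_{k_0}]\geq\norm[v]>0$; in particular $v_{k_0}\neq 0$ so $\norm[v_{k_0}]_0\geq r$, whence $\max_k \norm[v_k]_0\norm[v_k]\geq r\norm[v]$. Taking the infimum gives $F(v)\geq r\norm[v]>0$.

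\emph{Step 3 (the converse inequality).} Taking the trivial decomposition $v=v$ we immediately get $F(v)\leq \norm[v]\cdot\norm[v]_0$ pointwise. Combined with Step~2, $F$ is itself an admissible candidate in the supremum defining $\norm_0.\norm$, whence $F\leq \norm_0.\norm$ pointwise. Together with Step~1 we conclude $F=\norm_0.\norm$.

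The only mildly delicate point is the positive definiteness in Step~2; everything else is either the ultrametric inequality applied the obvious way or the observation that $\norm_0.\norm$ is, by Lemma~\ref{lem:upperbound} and its definition, precisely the largest norm in $\cN(V)$ bounded above by the gauge $\norm\cdot\norm_0$.
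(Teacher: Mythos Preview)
Your proof is correct and follows essentially the same route as the paper's: bound $\norm_0.\norm$ above by $F$ via the ultrametric inequality, then check that $F$ is itself a candidate norm by using the trivial decomposition $v=v$. The only difference is that the paper declares ``it is clear that the right-hand side defines a norm in $\cN(V)$'' without justifying positive definiteness, whereas your Step~2 supplies this via the lower bound $F(v)\geq r\,\norm[v]$ with $r=\min_i\norm[e_i]_0>0$; this extra care is welcome, since nothing earlier in the paper has yet established that the candidate set defining $\norm_0.\norm$ is nonempty.
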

\begin{proof}
    Let $\norm'$ be a candidate for the envelope defining $\norm_0.\norm$. Pick a decomposition $v=\sum_k v_k$. Then
    $$\norm[v]'\leq \max_k \norm[v_k]'\leq \max_k \norm[v_k]_0\norm[v_k].$$
    Since $\norm':=\norm_0.\norm$ is itself a candidate, we may take the infimum over such decompositions, which proves the $\leq$ inequality.

    Conversely, it is clear that the right-hand side defines a norm in $\cN(V)$, for any two decomposition $v=\sum_k v_k$ and $w=\sum_\ell w_\ell$ yield a decomposition of $v+w$. Now, picking the decomposition $v=v$ shows the right-hand side to be a candidate for the envelope defining $\norm_0.\norm$, proving the $\geq$ direction.
\end{proof}

Finally, we see how the action behaves under field extension (and restriction). This result will not be used, but shows that many problems involving envelopes of non-diagonalisable norms cannot always be tackled by passing to the diagonalisable case \textit{via} spherically complete extensions.

\begin{proposition}\label{prop:envelopeground}
    Let $\norm\in \cN(V)$, $\norm_0\in \cN_0(V)$. Consider a field extension $(\mfl,|\cdot|_\mfl)$ of $(\field,|\cdot|)$. Then $$\norm_{0,\mfl}.\norm_{\mfl}\leq (\norm_0.\norm)_\mfl,$$ with equality if $\norm\in \cN^\diag(V)$.

    Conversely, given $\norm_0^\mfl$ and $\norm^\mfl$, then
    $$(\norm_0^\mfl.\norm^\mfl)|_\field\leq (\norm_0^\mfl)|_\field.\norm^\mfl|_\field.$$
\end{proposition}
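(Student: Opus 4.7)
The natural tool here is the inf-characterisation of the envelope provided by Lemma \ref{lem:infenvelope}, together with the defining inf-formula for the ground field extension of a norm. Both inequalities will reduce to comparing two nested infima over decompositions, where one set of decompositions is visibly contained in the other.

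For the first inequality $\norm_{0,\mfl}.\norm_\mfl\leq (\norm_0.\norm)_\mfl$, I would fix $w\in V\otimes_\field\mfl$ and pick any representation $w=\sum_i b_i v_i$ with $b_i\in\mfl$, $v_i\in V$. For each $v_i$ I would then further pick a $V$-decomposition $v_i=\sum_k v_{i,k}$, which assembles into a decomposition $w=\sum_{i,k} b_i v_{i,k}$ in $V\otimes_\field\mfl$. Lemma \ref{lem:infenvelope} applied to $(V\otimes_\field\mfl,\norm_{0,\mfl},\norm_\mfl)$ bounds $\norm_{0,\mfl}.\norm_\mfl[w]$ by $\max_{i,k}\norm[b_i v_{i,k}]_{0,\mfl}\cdot\norm[b_i v_{i,k}]_\mfl$. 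Using that the trivial absolute value extends trivially (so $|b_i|_0=1$ whenever $b_i\neq 0$) and that $\norm_\mfl$ restricts to $\norm$ on $V$, this simplifies to $\max_i |b_i|_\mfl \max_k \norm[v_{i,k}]_0\cdot\norm[v_{i,k}]$. Taking the infimum over $V$-decompositions of each $v_i$ (Lemma \ref{lem:infenvelope} again) produces $\max_i |b_i|_\mfl \cdot \norm_0.\norm[v_i]$, and finally the infimum over $w=\sum b_i v_i$ is precisely $(\norm_0.\norm)_\mfl[w]$ by the definition of ground field extension.

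For the equality in the diagonalisable case I would use the joint-orthogonal-basis result for $\norm\in\cN^\diag(V)$ and $\norm_0\in\cN_0(V)$ stated just before Proposition \ref{prop:envdiag}. Fix such a basis $(v_i)_i$; then Proposition \ref{prop:envdiag} gives $\norm_0.\norm[\sum a_i v_i]=\max_i |a_i|\norm[v_i]\norm[v_i]_0$, so $(v_i)_i$ is also orthogonal for $\norm_0.\norm$. Ground field extension of a diagonalisable norm preserves its orthogonal basis and the values on it, so $(v_i\otimes 1)_i$ is jointly orthogonal for $\norm_\mfl$ and $\norm_{0,\mfl}$, as well as for $(\norm_0.\norm)_\mfl$. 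Applying Proposition \ref{prop:envdiag} on $V\otimes_\field\mfl$ and comparing the two explicit formulas term by term yields $\norm_{0,\mfl}.\norm_\mfl[w]=\max_i|b_i|_\mfl\norm[v_i]\norm[v_i]_0=(\norm_0.\norm)_\mfl[w]$.

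The second inequality $(\norm_0^\mfl.\norm^\mfl)|_\field\leq (\norm_0^\mfl|_\field).(\norm^\mfl|_\field)$ is the easiest part. By Lemma \ref{lem:infenvelope}, for $v\in V$,
\[
    (\norm_0^\mfl.\norm^\mfl)[v]=\inf_{v=\sum_k w_k,\ w_k\in V\otimes_\field\mfl}\max_k\norm[w_k]_0^\mfl\cdot\norm[w_k]^\mfl,
\]
while the right-hand side is obtained by the same infimum restricted to decompositions with $w_k\in V$. Since every $V$-decomposition of $v$ is in particular a $V\otimes_\field\mfl$-decomposition, the first infimum is over a larger set and hence smaller, giving the desired inequality. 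The main subtlety to watch for throughout is the distinction between the trivial absolute value on $\field$ and its canonical trivial extension to $\mfl$, and the fact that $\norm_{0,\mfl}$ denotes the ground field extension of a $|\cdot|_0$-norm with respect to this trivially extended valuation; once that bookkeeping is set up, both inequalities are formal consequences of Lemma \ref{lem:infenvelope}.
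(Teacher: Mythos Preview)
Your proof is correct and takes a genuinely different route from the paper's. You work throughout with the inf-characterisation of the Gérardin envelope (Lemma \ref{lem:infenvelope}) and the defining infimum for ground field extension, reducing each inequality to a comparison of nested infima over decompositions. The paper instead argues via the sup-envelope definition and the extremality of ground field extension: for the first inequality, it observes that any candidate $\norm'\leq\norm_{0,\mfl}\cdot\norm_\mfl$ restricts on $V$ to a candidate for $\norm_0.\norm$, hence $(\norm_{0,\mfl}.\norm_\mfl)|_\field\leq\norm_0.\norm$, and concludes using that $(\,\cdot\,)_\mfl$ is the maximal norm with given restriction; for the second inequality, it bootstraps from the first via $\norm^\mfl\leq((\norm^\mfl)|_\field)_\mfl$ together with monotonicity of the envelope. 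Your approach is more hands-on and self-contained, needing only the two explicit inf-formulas; the paper's approach instead emphasises the adjunction-like role of ground field extension and restriction. For the equality in the diagonalisable case, both arguments coincide in picking a joint orthogonal basis and invoking Proposition \ref{prop:envdiag} on both sides.
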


\begin{proof}
    Given $\norm'\in \cN(V\otimes_\field \mfl)$ bounded above by $\norm_{0,\mfl}\norm_\mfl$ then in particular its restriction $\norm|_\field$ to $V$ is bounded above by $\norm_{0,\mfl}|_\field\norm_\mfl|_\field=\norm_0\norm$. Thus $(\norm_{0,\mfl}.\norm_\mfl)|_\field\leq \norm_0.\norm$, hence
    $$((\norm_{0,\mfl}.\norm_\mfl)|_\field)_\mfl\leq (\norm_0.\norm)_\mfl$$
    by \eqref{eq:fieldineq}. Furthermore $((\norm_{0,\mfl}.\norm_\mfl)|_\field)_\mfl$ is the largest norm restricting to $(\norm_{0,\mfl}.\norm_\mfl)|_\field$ on $\field$, and $(\norm_{0,\mfl}.\norm_\mfl)$ is such a norm, hence
    \begin{equation}
        \norm_{0,\mfl}.\norm_\mfl\leq (\norm_0.\norm)_\mfl.
    \end{equation}
    The reverse inequality is clear in the orthogonalisable case, as the base change of an orthogonal basis for $\norm$ remains orthogonal for $\norm_\mfl$ (\cite[Proposition 1.25(iv)]{boueri}).

    Now, for the second part of the proof, by the largest extension property we have $\norm_\mfl\leq ((\norm_\mfl)|_\field)_\mfl$ and likewise for $\norm_0$, so by the envelope definition and the first part of the proof,
    $$\norm_0^\mfl.\norm^\mfl\leq ((\norm_0^\mfl)|_\field)_\mfl.((\norm^\mfl)|_\field)_\mfl\leq (\norm_0^\mfl|_\field.\norm^\mfl|_\field)_\mfl.$$
    and restricting back to $\field$,
    $$(\norm_0^\mfl.\norm^\mfl)|_\field\leq \norm_0^\mfl|_\field.\norm^\mfl|_\field.$$
\end{proof}

\subsection{Continuity of the action in $d_\infty$.}\label{sect:14}

We are now interested in continuity properties of the envelope construction, beginning with the case $d_\infty$. A tough problem arises in considering approximation from the diagonalisable case. As we have seen above, the action of $\cN_0$ doesn't behave well with respect to ground field extension, so that we need to work with non-diagonalisable norms. We begin with a few technical results on almost-orthogonality of norms.

\begin{definition}
    Given $\lambda\in [0,1]$, we say that a basis $(v_i)_i$ of $V$ is \textit{$\lambda$-orthogonal} for some $\norm\in \cN(V)$ if for all $v=\sum a_i v_i\in V$,
    $$\lambda \max_i |a_i|\norm[v_i]\leq \norm[v].$$
    In particular, a $1$-orthogonal basis is orthogonal, and all bases are $0$-orthogonal for all norms.
\end{definition}

\begin{lemma}
    Let $\norm,\norm'\in \cN(V)$. Assume that $(v_i)_i$ is $\lambda$-orthogonal for $\norm'$, with $0<\lambda\leq 1$. Then for all $\norm\in \cN(V)$,
    $$\max_i \frac{\norm[v_i]}{\norm[v_i]'}\leq\sup_{v\in V-\{0\}}\frac{\norm[v]}{\norm[v]'}\leq \lambda\mi \max_i \frac{\norm[v_i]}{\norm[v_i]'}.$$
    In particular, if $\lambda=1$, equality holds.
\end{lemma}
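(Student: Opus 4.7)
The lemma is essentially a two-sided comparison between the ``operator-norm-type'' ratio $\sup_v \norm[v]/\norm[v]'$ and the coefficient-wise ratios on an almost-orthogonal basis. The plan is direct: the lower bound is cosmetic, and the upper bound reduces to combining the ultrametric inequality for $\norm$ with the $\lambda$-orthogonality hypothesis for $\norm'$.

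First I would dispatch the lower bound. Since the supremum is taken over all nonzero $v\in V$ and the $v_i$ are such specific vectors, restricting to $v=v_i$ already yields
$$\max_i \frac{\norm[v_i]}{\norm[v_i]'}\leq \sup_{v\in V-\{0\}}\frac{\norm[v]}{\norm[v]'},$$
and no further work is needed.

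For the upper bound, set $M:=\max_i \norm[v_i]/\norm[v_i]'$, so that $\norm[v_i]\leq M\norm[v_i]'$ for every $i$. Given an arbitrary nonzero $v=\sum a_i v_i$, the ultrametric inequality applied to $\norm$ gives
$$\norm[v]\leq \max_i |a_i|\norm[v_i]\leq M\max_i |a_i|\norm[v_i]',$$
while the $\lambda$-orthogonality of $(v_i)_i$ with respect to $\norm'$ gives
$$\norm[v]'\geq \lambda \max_i |a_i|\norm[v_i]'.$$
Dividing these two inequalities (after observing $\max_i |a_i|\norm[v_i]'>0$ since $v\neq 0$) produces
$$\frac{\norm[v]}{\norm[v]'}\leq \lambda^{-1} M,$$
and taking the supremum over $v$ yields the claimed upper bound. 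The equality statement when $\lambda=1$ is then immediate from squeezing.

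I do not expect any real obstacle: the argument is a routine manipulation of the ultrametric inequality and the definition of $\lambda$-orthogonality, and the only thing one must be careful about is allowing $a_i=0$ for some indices, which is harmless since the $\max_i |a_i|\norm[v_i]'$ is strictly positive as soon as $v\neq 0$ (using that $\norm[v_j]'>0$ for the indices $j$ with $a_j\neq 0$).
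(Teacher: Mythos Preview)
Your proof is correct and follows essentially the same approach as the paper: the lower bound by restricting to the basis vectors, and the upper bound by combining the ultrametric inequality for $\norm$ with the $\lambda$-orthogonality of $(v_i)_i$ for $\norm'$, then dividing. The only cosmetic difference is that the paper writes the intermediate step as $\max_i |a_i|\frac{\norm[v_i]}{\norm[v_i]'}\norm[v_i]'\leq \bigl(\max_i \frac{\norm[v_i]}{\norm[v_i]'}\bigr)\max_i |a_i|\norm[v_i]'$ before invoking $\lambda$-orthogonality, which is the same computation as your introduction of $M$.
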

\begin{proof}
    It is clear that 
	$$\max_i \frac{\norm[v_i]}{\norm[v_i]}\leq \sup_{v\in V-\{0\}}\frac{\norm[v]}{\norm[v]'}.$$
	Now, pick $v=\sum a_i v_i$. Then
	\begin{align*}
	\norm[v]\leq \max_i |a_i|\norm[v_i]&=\max_i |a_i|\frac{\norm[v_i]}{\norm[v_i]'}\norm[v_i]'\\
	&\leq \left(\max_i  \frac{\norm[v_i]}{\norm[v_i]'}\right)\cdot \max_i |a_i|\norm[v_i]'\\
	&\leq \left(\max_i  \frac{\norm[v_i]}{\norm[v_i]'}\right)\cdot \lambda\mi \norm[v]'.
	\end{align*}
    Dividing by $\norm[v]'$ and taking the supremum over $v$ proves the result.
\end{proof}

We now refine Lemma \ref{lem:giortho} for $\lambda$-orthogonal bases.
\begin{lemma}\label{lem:gilambdaortho}
    Let $\norm,\norm'\in \cN(V)$. Given $0<\lambda,\mu<1$, let $(v_i)_i$ be $\lambda$-orthogonal for $\norm'$ and $(w_i)_i$ be $\mu$-orthogonal for $\norm$. Then,
    $$\log\max_{i,j}\left(\frac{\norm[v_i]}{\norm[v_i]'},\frac{\norm[w_i]'}{\norm[w_i]}\right)\leq d_\infty(\norm,\norm')\leq \log\max_{i,j}\left(\lambda\mi\frac{\norm[v_i]}{\norm[v_i]'},\mu\mi\frac{\norm[w_i]'}{\norm[w_i]}\right).$$
\end{lemma}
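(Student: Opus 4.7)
The plan is to observe that the result follows almost directly from the preceding unnumbered lemma, which bounds $\sup_v \norm[v]/\norm[v]'$ in terms of its values on a $\lambda$-orthogonal basis.

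First I would handle the lower bound. This is immediate from the definition
$$d_\infty(\norm,\norm')=\sup_{v\in V-\{0\}}\left|\log\frac{\norm[v]'}{\norm[v]}\right|,$$
because each $v_i$ and each $w_i$ is a nonzero vector in $V$, so the supremum is at least as large as $\log(\norm[v_i]/\norm[v_i]')$, $\log(\norm[v_i]'/\norm[v_i])$, $\log(\norm[w_i]/\norm[w_i]')$, and $\log(\norm[w_i]'/\norm[w_i])$. Taking maxima (and keeping only the positive parts, since the logarithm of the max is what appears) yields exactly
$$d_\infty(\norm,\norm')\geq\log\max_i\left(\frac{\norm[v_i]}{\norm[v_i]'},\frac{\norm[w_i]'}{\norm[w_i]}\right).$$

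For the upper bound, I would use the symmetric splitting
$$d_\infty(\norm,\norm')=\log\max\Bigl(\sup_{v}\tfrac{\norm[v]'}{\norm[v]},\,\sup_{v}\tfrac{\norm[v]}{\norm[v]'}\Bigr).$$
The preceding lemma, applied to the $\lambda$-orthogonal basis $(v_i)_i$ for $\norm'$, gives
$$\sup_{v}\frac{\norm[v]}{\norm[v]'}\leq \lambda\mi\max_i\frac{\norm[v_i]}{\norm[v_i]'},$$
and applied (after swapping the roles of $\norm$ and $\norm'$) to the $\mu$-orthogonal basis $(w_i)_i$ for $\norm$, it gives
$$\sup_{v}\frac{\norm[v]'}{\norm[v]}\leq \mu\mi\max_i\frac{\norm[w_i]'}{\norm[w_i]}.$$
Combining the two bounds and taking the logarithm of the maximum yields
$$d_\infty(\norm,\norm')\leq \log\max_i\left(\lambda\mi\frac{\norm[v_i]}{\norm[v_i]'},\,\mu\mi\frac{\norm[w_i]'}{\norm[w_i]}\right),$$
as required.

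There is no real obstacle here: the substantive content is already contained in the preceding lemma, and this statement is essentially a two-sided bookkeeping of it. The only thing to be careful about is the symmetric role of $\norm$ and $\norm'$ in $d_\infty$, which is handled by applying the previous lemma in both directions (with $\lambda$ controlling one side and $\mu$ the other). In the fully orthogonal case $\lambda=\mu=1$ both bounds collapse and one recovers Lemma \ref{lem:giortho}, which is a reassuring sanity check.
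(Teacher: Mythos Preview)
Your proof is correct and follows essentially the same approach as the paper: both apply the preceding lemma twice (once in each direction) to bound $\sup_v \norm[v]/\norm[v]'$ and $\sup_v \norm[v]'/\norm[v]$, then combine via the splitting of $d_\infty$ into its two one-sided suprema. The only cosmetic difference is that the paper reads off the lower bound from the left-hand inequality of the preceding lemma, whereas you obtain it directly by evaluating the definition of $d_\infty$ at the basis vectors; these are the same observation.
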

\begin{proof}
    We have by the previous lemma that
    $$\max_i \frac{\norm[v_i]}{\norm[v_i]'}\leq\sup_{v\in V-\{0\}}\frac{\norm[v]}{\norm[v]'}\leq \lambda\mi \max_i \frac{\norm[v_i]}{\norm[v_i]'}$$
    and
    $$\max_i \frac{\norm[w_i]'}{\norm[w_i]}\leq\sup_{v\in V-\{0\}}\frac{\norm[v]'}{\norm[v]}\leq \mu\mi \max_i \frac{\norm[w_i]'}{\norm[w_i]},$$
    whence the result.
\end{proof}

\begin{lemma}\label{lem:lambdaorthoenvelope}
    Assume that $(v_i)_i$ is $\mu$-orthogonal for $\norm_0\in \cN_0(V)$ and $\lambda$-orthogonal for $\norm\in \cN(V)$, $0<\lambda,\mu\leq 1$. Then for all $v=\sum a_i v_i$,
    $$\lambda\mu\max_i |a_i|\norm[v_i]_0\norm[v_i]\leq \norm_0.\norm[v].$$
\end{lemma}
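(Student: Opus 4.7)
The plan is to use the alternative characterisation of the envelope given by Lemma \ref{lem:infenvelope}, namely
$$\norm_0.\norm[v] = \inf_{v=\sum_k v_k}\max_k \norm[v_k]_0\cdot\norm[v_k],$$
and to prove the claimed lower bound term-by-term: I will show that every decomposition $v=\sum_k v_k$ already satisfies
$$\max_k \norm[v_k]_0\,\norm[v_k]\,\geq\,\lambda\mu\,\max_i |a_i|\,\norm[v_i]_0\,\norm[v_i],$$
after which taking the infimum over decompositions yields the conclusion. Assuming $v\neq 0$ (else both sides vanish), fix an index $i_0$ realising the maximum on the right-hand side, so $a_{i_0}\neq 0$.

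The key trick is to select one piece $v_{k_0}$ of the decomposition whose $v_{i_0}$-coordinate is at least as large as $a_{i_0}$. Writing $v_k=\sum_i a_i^{(k)} v_i$, we have $a_{i_0}=\sum_k a_{i_0}^{(k)}$, and the ultrametric inequality on $(\field,|\cdot|)$ gives some $k_0$ with $|a_{i_0}^{(k_0)}|\geq |a_{i_0}|>0$. In particular $a_{i_0}^{(k_0)}\neq 0$, so $|a_{i_0}^{(k_0)}|_0=1$.

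Now apply $\mu$-orthogonality of the basis for $\norm_0$ to $v_{k_0}$, keeping only the $i_0$-term of the max: using $|a_{i_0}^{(k_0)}|_0=1$ this yields $\norm[v_{k_0}]_0\geq \mu\,\norm[v_{i_0}]_0$. Similarly, $\lambda$-orthogonality of the basis for $\norm$ gives $\norm[v_{k_0}]\geq \lambda\,|a_{i_0}^{(k_0)}|\,\norm[v_{i_0}]\geq \lambda\,|a_{i_0}|\,\norm[v_{i_0}]$. Multiplying the two inequalities produces
$$\norm[v_{k_0}]_0\,\norm[v_{k_0}]\,\geq\,\lambda\mu\,|a_{i_0}|\,\norm[v_{i_0}]_0\,\norm[v_{i_0}]\,=\,\lambda\mu\,\max_i |a_i|\,\norm[v_i]_0\,\norm[v_i],$$
which is the desired bound. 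The main (mild) subtlety is exactly this pigeonhole step: one must exploit the \emph{ultrametric} inequality on the base field (not the naive triangle inequality, which would cost a factor of the number of summands) to locate a single piece $v_{k_0}$ that alone captures the full size of the $i_0$-coordinate; everything else is a direct application of the two $\lambda$- and $\mu$-orthogonality hypotheses.
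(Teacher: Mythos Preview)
Your proof is correct, but it takes a different route from the paper. The paper uses the original sup-envelope definition directly: it observes that the function $v\mapsto \lambda\mu\max_i |a_i|\,\norm[v_i]_0\,\norm[v_i]$ is itself a norm (being a positive scaling of a max-norm), and checks in one line that it is bounded above by $\norm[v]\cdot\norm[v]_0$ via
\[
\lambda\mu\max_i |a_i|\,\norm[v_i]_0\,\norm[v_i]
\;\leq\; \bigl(\lambda\max_i |a_i|\,\norm[v_i]\bigr)\bigl(\mu\max_i |a_i|_0\,\norm[v_i]_0\bigr)
\;\leq\;\norm[v]\cdot\norm[v]_0,
\]
the last step being the two $\lambda$- and $\mu$-orthogonality inequalities applied to $v$ itself. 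Being a candidate for the envelope, it is then automatically $\leq \norm_0.\norm$. Your argument instead passes through Lemma~\ref{lem:infenvelope}, fixes an arbitrary decomposition $v=\sum_k v_k$, and uses an ultrametric pigeonhole to locate a single piece $v_{k_0}$ carrying the full $i_0$-coordinate before applying the orthogonality bounds to that piece. The paper's approach is somewhat more economical (no decomposition, no pigeonhole, one application of each hypothesis), while yours gives a nice illustration of how the inf-characterisation of Lemma~\ref{lem:infenvelope} can substitute for the envelope-candidate argument.
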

\begin{proof}
    Given $v=\sum a_i v_i\in V$,
    \begin{align*}
        \lambda\mu\max_i |a_i|\norm[v_i]_0\norm[v_i]&=\lambda\mu\max_i |a_i|\norm[v_i]|a_i|_0\norm[v_i]_0\\
        &\leq \lambda\max_i(|a_i|\norm[v_i])\mu(\max_i |a_i|_0\norm[v_i]_0)\\
        &\leq\norm[v]\cdot \norm[v]_0.
    \end{align*}
    Since $\lambda\mu>0$, the expression $\lambda\mu\max_i |a_i|\norm[v_i]_0\norm[v_i]$ is indeed a norm, thus by the above inequality it is a candidate for the envelope defining $\norm_0.\norm$, hence it has to be smaller than $\norm_0.\norm$, concluding the proof.
\end{proof}

The following result, useful on its own, also proves when combined with density of diagonalisable norms that any norm admits a $\lambda$-orthogonal basis for all $0\leq \lambda<1$.

\begin{lemma}\label{lem:orthodinfty}
    Let $(v_i)_i$ be an orthogonal basis for $\norm\in \cN(V)$, and let $\norm'\in \cN(V)$. Then $(v_i)_i$ is $e^{-2d_\infty(\norm,\norm')}$-orthogonal for $\norm'$.
\end{lemma}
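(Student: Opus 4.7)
The plan is to run the bi-directional $d_\infty$-inequality \eqref{eq:dinfty} twice in a row: once to pass from $\norm'$ back to the reference norm $\norm$ (for which $(v_i)_i$ is truly orthogonal), and once more to pass from $\norm$-lengths of the basis vectors back to $\norm'$-lengths.

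Set $D:=d_\infty(\norm,\norm')$, so that by \eqref{eq:dinfty} we have $e^{-D}\norm[w]\le\norm[w]'\le e^{D}\norm[w]$ for every $w\in V$. Fix $v=\sum_i a_i v_i\in V$. First I would apply the lower half of \eqref{eq:dinfty} to $v$ itself and use that $(v_i)_i$ is orthogonal for $\norm$:
\[
\norm[v]' \;\ge\; e^{-D}\norm[v] \;=\; e^{-D}\max_i |a_i|\,\norm[v_i].
\]
Next, I would apply the upper half of \eqref{eq:dinfty} to each basis vector $v_i$, which gives $\norm[v_i]\ge e^{-D}\norm[v_i]'$, and hence
\[
\max_i |a_i|\,\norm[v_i]\;\ge\; e^{-D}\max_i |a_i|\,\norm[v_i]'.
\]
Chaining these two inequalities yields
\[
\norm[v]' \;\ge\; e^{-2D}\max_i |a_i|\,\norm[v_i]',
\]
which is precisely the $e^{-2D}$-orthogonality of $(v_i)_i$ with respect to $\norm'$.

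There is essentially no obstacle here; the only subtlety is conceptual, namely that one loses a factor $e^{-D}$ going each direction (from $\norm'$ to $\norm$ on the whole vector, then from $\norm$ back to $\norm'$ on the basis vectors), which explains the exponent $2$. Together with the density of $\cN^\diag(V)$ in $(\cN(V),d_\infty)$, this shows that any norm admits a $\lambda$-orthogonal basis for every $0\le\lambda<1$, as stated in the paragraph preceding the lemma.
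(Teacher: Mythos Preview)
Your proof is correct and follows essentially the same argument as the paper's: both apply one half of \eqref{eq:dinfty} to the basis vectors $v_i$ and the other half to the full vector $v$, sandwiching the orthogonality equality $\norm[v]=\max_i|a_i|\norm[v_i]$ in between to pick up the factor $e^{-2d_\infty(\norm,\norm')}$. The only difference is cosmetic ordering of the chain of inequalities.
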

\begin{proof}
    Write $C=d_\infty(\norm,\norm')$ so that, for all $v\in V$,
    $$e^{-C}\norm[v]'\leq \norm[v]\leq e^{C}\norm[v].$$
    In particular, for all $i$, $e^{-C}\norm[v_i]'\leq \norm[v_i]$
    so that
    $$e^{-C}\max_i |a_i|\norm[v_i]'\leq \max_i |a_i|\norm[v_i]=\norm[v]\leq e^{C}\norm[v]',$$
    proving the result.
\end{proof}

It also has as consequence the following:
\begin{corollary}\label{coro:jointlambdadiag}
    Given $\norm_0\in \cN_0(V)$ and $\norm\in \cN(V)$, for all $\lambda<1$, there exists a basis $(v_i)_i$ that is jointly orthogonal for $\norm_0$ and $\lambda$-orthogonal for $\norm$.
\end{corollary}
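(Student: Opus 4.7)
The plan is to combine the density of $\cN^\diag(V)$ in $\cN(V)$, the joint diagonalisation theorem for a diagonalisable norm and an element of $\cN_0(V)$, and Lemma \ref{lem:orthodinfty}, which says that an orthogonal basis for one norm is $e^{-2d_\infty}$-orthogonal for any norm at $d_\infty$-distance at most $d_\infty$ from it.

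Concretely: fix $\lambda < 1$. By $d_\infty$-density of $\cN^\diag(V)$ in $\cN(V)$, pick $\norm^{(k)} \in \cN^\diag(V)$ with $d_\infty(\norm, \norm^{(k)}) \to 0$, and choose $k$ large enough that
$$
e^{-2d_\infty(\norm, \norm^{(k)})} \geq \lambda.
$$
Since $\norm^{(k)} \in \cN^\diag(V)$, by the joint diagonalisation theorem stated just before Proposition \ref{prop:envdiag}, there exists a basis $(v_i)_i$ of $V$ that is jointly orthogonal for $\norm^{(k)}$ and $\norm_0$. In particular $(v_i)_i$ is orthogonal for $\norm^{(k)}$, so by Lemma \ref{lem:orthodinfty} it is $e^{-2d_\infty(\norm^{(k)}, \norm)}$-orthogonal, hence $\lambda$-orthogonal, for $\norm$. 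The same basis is simultaneously (fully) orthogonal for $\norm_0$, which is exactly the claim.

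There is no real obstacle: the potential worry would be that the basis we obtain loses its joint orthogonality with $\norm_0$ in the limit, but this never enters — at the fixed finite stage $k$ we already have exact orthogonality with $\norm_0$ combined with the required $\lambda$-orthogonality with $\norm$. The proof does rely crucially on the fact that $\norm_0$, being trivially valued, can always be jointly diagonalised with any diagonalisable norm, regardless of whether $(\field, |\cdot|)$ is spherically complete; that is what lets us approximate $\norm$ from the diagonalisable side without losing control of the $\norm_0$-orthogonality.
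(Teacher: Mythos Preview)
Your proof is correct and follows essentially the same approach as the paper's: approximate $\norm$ by some $\norm'\in\cN^\diag(V)$ with $d_\infty(\norm,\norm')$ small enough, jointly diagonalise $\norm'$ and $\norm_0$, then apply Lemma~\ref{lem:orthodinfty}. The only cosmetic difference is that the paper picks a single $\norm'$ with $d_\infty(\norm,\norm')<\varepsilon$ for a suitable $\varepsilon$ rather than phrasing it via a sequence $\norm^{(k)}$.
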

\begin{proof}
    Pick $\varepsilon>0$ and $\norm'\in \cN^\diag(V)$ with $d_\infty(\norm,\norm')<\varepsilon$. Pick a basis $(v_i)_i$ jointly orthogonal for $\norm_0$ and $\norm'$. Then, by Lemma \ref{lem:orthodinfty} this basis is $e^{-2\varepsilon}$-orthogonal for $\norm$.
\end{proof}

We are then able to prove the main technical result of this section.
\begin{proposition}\label{prop:appxenvelope}
    Let $(\norm_k)_k$ be a sequence in $\cN^\diag(V)$ with $d_\infty(\norm_k,\norm)\to 0$. Then $d_\infty(\norm_0.\norm_k,\norm_0.\norm)\to 0$.
\end{proposition}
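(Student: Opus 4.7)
My plan is to exploit the diagonalisability of each $\norm_k$ to make the envelope $\norm_0.\norm_k$ explicit on a convenient basis, and then to transfer that basis to the possibly non-diagonalisable $\norm$ via the $\lambda$-orthogonality formalism just developed. Concretely, fix $k$ and set $C_k := d_\infty(\norm_k,\norm)$. Since $\norm_k \in \cN^\diag(V)$, the joint-orthogonalisation theorem preceding Proposition \ref{prop:envdiag} provides a basis $(v_i)_i$ jointly orthogonal for $\norm_k$ and $\norm_0$; by Lemma \ref{lem:orthodinfty}, this same basis is automatically $e^{-2C_k}$-orthogonal for $\norm$, furnishing a common basis on which to compare the two envelopes.

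For any $v = \sum_i a_i v_i$, Proposition \ref{prop:envdiag} will give the exact value $\norm_0.\norm[v]_k = \max_i |a_i|\,\norm[v_i]_0\,\norm[v_i]_k$. The remaining task is to sandwich $\norm_0.\norm[v]$ between matching bounds: the lower bound $e^{-2C_k}\max_i |a_i|\,\norm[v_i]_0\,\norm[v_i] \leq \norm_0.\norm[v]$ will come from Lemma \ref{lem:lambdaorthoenvelope} applied with $\mu = 1$ and $\lambda = e^{-2C_k}$, while the upper bound $\norm_0.\norm[v] \leq \max_i |a_i|\,\norm[v_i]_0\,\norm[v_i]$ will come from Lemma \ref{lem:infenvelope} applied to the decomposition $v = \sum_i a_i v_i$ (noting that $|a_i|_0|a_i| = |a_i|$). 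Combining these two bounds with the pointwise comparison $e^{-C_k}\norm[v_i] \leq \norm[v_i]_k \leq e^{C_k}\norm[v_i]$ should yield $e^{-3C_k}\norm_0.\norm[v]_k \leq \norm_0.\norm[v] \leq e^{C_k}\norm_0.\norm[v]_k$ uniformly in $v$, whence $d_\infty(\norm_0.\norm,\norm_0.\norm_k) \leq 3C_k \to 0$.

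The main obstacle being sidestepped is precisely that $\norm$ itself need not be diagonalisable, so no basis need exist simultaneously orthogonal for both $\norm$ and $\norm_0$; this forbids the direct use of Proposition \ref{prop:envdiag} on the limit and is exactly why one cannot simply pass to a spherically complete extension (Proposition \ref{prop:envelopeground} is an inequality in the wrong direction in general). Borrowing the jointly orthogonal basis from the diagonalisable approximant $\norm_k$ is the natural workaround, and the entire point of the preparatory Lemmas \ref{lem:lambdaorthoenvelope} and \ref{lem:orthodinfty} is to control the envelope on a basis which is only \emph{approximately} orthogonal for $\norm$; the resulting multiplicative loss $e^{-2C_k}$ is harmless since $C_k \to 0$.
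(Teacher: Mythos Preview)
Your proof is correct and follows essentially the same route as the paper: pick a basis jointly orthogonal for $\norm_k$ and $\norm_0$, transfer it to $\norm$ as an $e^{-2C_k}$-orthogonal basis via Lemma \ref{lem:orthodinfty}, then control $\norm_0.\norm$ from below by Lemma \ref{lem:lambdaorthoenvelope} and from above by the envelope definition. The only cosmetic difference is that the paper packages the final comparison through Lemma \ref{lem:gilambdaortho}, whereas you obtain the two-sided pointwise bound $e^{-3C_k}\norm_0.\norm[v]_k \leq \norm_0.\norm[v] \leq e^{C_k}\norm_0.\norm[v]_k$ directly; both yield $d_\infty(\norm_0.\norm_k,\norm_0.\norm)\leq 3C_k$.
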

\begin{proof}
    For each $k$, pick a joint orthogonal basis $(v_{k,i})_i$ of $V$ for $\norm_0$ and $\norm_k$, which is then also orthogonal for $\norm_0.\norm_k$, and $\norm_0.\norm[v_{k,i}]_k=\norm[v_{k,i}]_0\norm[v_{k,i}]_k$. This basis is $\lambda_k:=e^{-2d_\infty(\norm,\norm_k)}$-orthogonal for $\norm$ by Lemma \ref{lem:orthodinfty} hence also for $\norm_0.\norm$ by Lemma \ref{lem:lambdaorthoenvelope}. Further, by Lemma \ref{lem:gilambdaortho} we have that
    $$d_\infty(\norm_0.\norm_k,\norm_0.\norm)\leq \log\lambda_k\mi\max_i\left|\frac{\norm[v_{k,i}]_0\norm[v_{k,i}]_k}{\norm_0.\norm[v_{k,i}]}\right|.$$
    Now on the one hand, by Lemma \ref{lem:orthodinfty} and Lemma \ref{lem:lambdaorthoenvelope} again we have that $\norm_0.\norm[v_{k,i}]\geq \lambda_k\norm[v_{k,i}]_0\norm[v_{k,i}]$ so that
    $$\log\frac{\norm[v_{k,i}]_0\norm[v_{k,i}]_k}{\norm_0.\norm[v_{k,i}]}\leq \log\lambda_k\mi\frac{\norm[v_{k,i}]_k}{\norm[v_{k,i}]},$$
    while by the definition of the envelope, $\norm_0.\norm[v_{k,i}]\leq \norm[v_{k,i}]_0\cdot\norm[v_{k,i}]$ and thus
    $$\log\frac{\norm_0.\norm[v_{k,i}]}{\norm[v_{k,i}]_0\norm[v_{k,i}]_k}\leq \log \frac{\norm[v_{k,i}]}{\norm[v_{k,i}]_k}.$$
    Putting everything together, we then have
    $$d_\infty(\norm_0.\norm_k,\norm_0.\norm)\leq\log\lambda_k^{-2}\max_i\left|\frac{\norm[v_{k,i}]_k}{\norm[v_{k,i}]}\right|\leq \log\lambda_k^{-2}+d_\infty(\norm,\norm_k),$$
    which is equal to $3d_\infty(\norm,\norm_k)$.
\end{proof}

\begin{theorem}\label{thm:dinftyenvelope}
    For all $\norm_0,\norm'_0\in \cN_0(V)$, $\norm,\norm'\in \cN(V)$, we have
    $$d_\infty(\norm_0.\norm,\norm'_0.\norm')\leq d_\infty(\norm_0,\norm'_0)+d_\infty(\norm,\norm').$$
\end{theorem}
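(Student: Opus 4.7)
The plan is to reduce to Lipschitzness in each variable separately via the triangle inequality, and then to derive each single-variable bound directly from two near-tautological properties of the envelope: monotonicity and scaling equivariance. Concretely, I would begin by writing
$$d_\infty(\norm_0.\norm,\norm'_0.\norm')\leq d_\infty(\norm_0.\norm,\norm_0.\norm')+d_\infty(\norm_0.\norm',\norm'_0.\norm'),$$
so it suffices to show $d_\infty(\norm_0.\norm,\norm_0.\norm')\leq d_\infty(\norm,\norm')$ and $d_\infty(\norm_0.\norm',\norm'_0.\norm')\leq d_\infty(\norm_0,\norm'_0)$.

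For the second-variable bound, I would first observe monotonicity: if $\tilde\norm\leq\tilde\norm'$ in $\cN(V)$, then the set of candidates $\{\norm''\in\cN(V):\norm''\leq\norm_0\cdot\tilde\norm\}$ is contained in $\{\norm''\leq\norm_0\cdot\tilde\norm'\}$, hence $\norm_0.\tilde\norm\leq\norm_0.\tilde\norm'$. Setting $C:=d_\infty(\norm,\norm')$, the characterisation \eqref{eq:dinfty} gives $e^{-C}\norm\leq\norm'\leq e^C\norm$. Applying $\norm_0.(\cdot)$ to both inequalities and invoking the scaling identity \eqref{eq:sum} in the specialised form $\norm_0.(e^c\norm)=e^c(\norm_0.\norm)$ yields
$$e^{-C}(\norm_0.\norm)=\norm_0.(e^{-C}\norm)\leq\norm_0.\norm'\leq\norm_0.(e^C\norm)=e^C(\norm_0.\norm),$$
which by \eqref{eq:dinfty} says precisely $d_\infty(\norm_0.\norm,\norm_0.\norm')\leq C$.

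The first-variable bound is handled by the same argument with the roles swapped. Monotonicity in the first slot: if $\norm_0\leq\norm'_0$, then $\norm_0\cdot\norm'\leq\norm'_0\cdot\norm'$ pointwise, so every candidate for $\norm_0.\norm'$ is a candidate for $\norm'_0.\norm'$, giving $\norm_0.\norm'\leq\norm'_0.\norm'$. Scaling equivariance in the first slot is again \eqref{eq:sum}, now specialised to $(e^c\norm_0).\norm'=e^c(\norm_0.\norm')$. Bracketing $\norm_0$ between $e^{-C'}\norm'_0$ and $e^{C'}\norm'_0$ with $C':=d_\infty(\norm_0,\norm'_0)$ and arguing as above produces $d_\infty(\norm_0.\norm',\norm'_0.\norm')\leq C'$, and combining with the previous step finishes the proof.

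I do not anticipate a substantive obstacle: the statement is a formal consequence of the two structural properties of the envelope noted above, both of which are transparent from the definition. In particular, none of the $\lambda$-orthogonality machinery of Lemmas \ref{lem:gilambdaortho}--\ref{lem:orthodinfty} or the approximation result in Proposition \ref{prop:appxenvelope} is needed for this universal bound; that technology was developed precisely because questions about explicit diagonalisations of $\norm_0.\norm$ are harder than the pointwise bracketing used here.
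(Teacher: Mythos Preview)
Your argument is correct and considerably more direct than the paper's. The paper first treats the diagonalisable case by choosing a joint orthogonal basis $(v_i)$ for $\norm_0,\norm$ and $(w_i)$ for $\norm'_0,\norm'$, bounding the ratios on these basis vectors using the envelope inequality and Proposition~\ref{prop:envdiag}, and then invoking Lemma~\ref{lem:giortho}; the non-diagonalisable case is then reached by approximating $\norm,\norm'$ from $\cN^\diag(V)$ and applying Proposition~\ref{prop:appxenvelope} (which in turn relies on the $\lambda$-orthogonality machinery of Lemmas~\ref{lem:gilambdaortho}--\ref{lem:orthodinfty}). You bypass all of this: monotonicity of the envelope in each slot together with the scaling identity \eqref{eq:sum} immediately converts the pointwise bracketing $e^{-C}\norm\leq\norm'\leq e^{C}\norm$ into the same bracketing after applying the action, and \eqref{eq:dinfty} concludes. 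This works uniformly over all non-Archimedean fields with no case distinction.

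What the paper's route buys is not this theorem itself but the surrounding technology: Proposition~\ref{prop:appxenvelope} and the $\lambda$-orthogonality lemmas are genuinely needed elsewhere (the determinant identity preceding Corollary~\ref{coro:volenvelope}, and the analogous approximation result Proposition~\ref{prop:geodappx} for geodesics). For Theorem~\ref{thm:dinftyenvelope} on its own, however, your proof is the cleaner one, and your closing remark that the orthogonality machinery is unnecessary here is well taken.
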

\begin{proof}
    We begin with the diagonalisable case. Pick a jointly diagonalising basis $(v_i)_i$ for $\norm_0$ and $\norm$, and a jointly diagonalising basis $(w_i)_i$ for $\norm'_0$ and $\norm'$. Then using the envelope characterisation (for the $\norm'$s) and Proposition \ref{prop:envdiag} (for the $\norm$s) we have that
    \begin{align*}
        \log \frac{\norm'_0.\norm[v_i]'}{\norm_0.\norm[v_i]}&\leq \log \frac{\norm[v_i]'_0\cdot\norm[v_i]'}{\norm[v_i]_0\cdot\norm[v_i]}\\
        &=\log\frac{\norm[v_i]'_0}{\norm[v_i]}+\log\frac{\norm[v_i]'}{\norm[v_i]}.
    \end{align*}
    Likewise,
    \begin{align*}
        \log\frac{\norm_0.\norm[w_i]}{\norm'_0.\norm[w_i]'}\leq \log\frac{\norm[w_i]_0}{\norm[w_i]'_0}+\log\frac{\norm[w_i]}{\norm[w_i]'}.
    \end{align*}
    The result then follows from Lemma \ref{lem:giortho} and the triangle inequality. In the general case, we approximate $\norm$ and $\norm'$ by sequences $(\norm_k)_k,(\norm'_k)_k$ in $\cN^\diag(V)$. Then
    \begin{align*}
        &d_\infty(\norm_0.\norm,\norm'_0.\norm')\\
        &\leq d_\infty(\norm_0.\norm,\norm_0.\norm_k)+d_\infty(\norm_0.\norm_k,\norm'_0.\norm_k)+d_\infty(\norm'_0.\norm_k,\norm'_0.\norm')\\
        &\leq d_\infty(\norm_0.\norm,\norm_0.\norm_k)+d_\infty(\norm_0,\norm'_0)+d_\infty(\norm_k,\norm'_k)+d_\infty(\norm'_0.\norm_k,\norm'_0.\norm').
    \end{align*}
    In the last inequality, we have used the first part of the proof, given that $\norm_k$ and $\norm'_k$ are in $\cN^\diag(V)$. By Proposition \ref{prop:appxenvelope}, the first and last term vanish as $k\to\infty$, while the middle terms converge to the desired expression. This concludes the proof.
\end{proof}

\subsection{Continuity of the action in $d_1$.}\label{sect:15}

For $d_1$, we first need an auxiliary result on the volume. Note that, in dimension $1$, the product of a norm in $\cN(V)$ and $\cN_0(V)$ is well-defined, thus $\det\norm_0.\det\norm=\det\norm_0\cdot\det\norm$. In \cite[p.167]{book:gerardin} the following was observed over a local field, and is now easily extended to the non-diagonalisable case:
\begin{lemma}
    As norms in $\cN(\det V)$, we have that
    $$\det(\norm_0.\norm)=\det\norm_0.\det\norm=\det\norm_0\cdot\det\norm.$$
\end{lemma}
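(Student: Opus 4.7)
The plan is to verify the lemma in the diagonalisable case and extend by approximation. The second equality in the statement is tautological: since $\det V$ is one-dimensional, the Gérardin action there reduces to the pointwise product of norms (there is no nontrivial envelope to take), so $\det\norm_0.\det\norm = \det\norm_0 \cdot \det\norm$ trivially. The content is therefore the first equality $\det(\norm_0.\norm) = \det\norm_0 \cdot \det\norm$.

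First I would treat the diagonalisable case. When $\norm \in \cN^\diag(V)$, pick a basis $(v_i)_i$ of $V$ jointly orthogonal for $\norm_0$ and $\norm$, which exists by the joint orthogonalisation result recalled just before Proposition \ref{prop:envdiag}. That proposition yields that $(v_i)_i$ is also orthogonal for $\norm_0.\norm$, with $\norm_0.\norm[v_i] = \norm[v_i]_0 \cdot \norm[v_i]$. Evaluating all three norms at $\delta = v_1 \wedge \cdots \wedge v_n$ and using the orthogonal-basis formula $\det\norm[\delta] = \prod_i \norm[v_i]$ (which applies equally to $\norm_0$ and $\norm_0.\norm$ in the same basis) reduces the identity to $\prod_i (\norm[v_i]_0 \cdot \norm[v_i]) = (\prod_i \norm[v_i]_0)(\prod_i \norm[v_i])$, which is tautological. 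Since $\det V$ is one-dimensional, checking the identity on a single nonzero vector suffices.

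For the general case, I would approximate $\norm$ by $\norm_k \in \cN^\diag(V)$ with $d_\infty(\norm_k, \norm) \to 0$, noting that $\norm_0$ is automatically diagonalisable. By Proposition \ref{prop:appxenvelope}, $d_\infty(\norm_0.\norm_k, \norm_0.\norm) \to 0$ as well. On the one-dimensional space $\det V$, the $d_\infty$ distance between two norms equals the absolute value of their relative volume, which by Theorem \ref{thm:voldet} combined with the $d_\infty$-Lipschitz property in Proposition \ref{prop:vol}(3) is bounded by the $d_\infty$ distance of the parent norms on $V$. Hence both $\det\norm_k \to \det\norm$ and $\det(\norm_0.\norm_k) \to \det(\norm_0.\norm)$ in $d_\infty$ on $\det V$. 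Passing to the limit in the diagonalisable identity $\det(\norm_0.\norm_k) = \det\norm_0 \cdot \det\norm_k$ then yields the result.

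The main obstacle in the extension from the diagonalisable case is that, by Proposition \ref{prop:envelopeground}, the Gérardin action does not commute with ground field extension for non-orthogonalisable norms, so one cannot directly reduce to the diagonalisable setting by base-changing to a spherically complete extension. The continuity result of Proposition \ref{prop:appxenvelope}, established in the preceding subsection, is precisely what bypasses this difficulty.
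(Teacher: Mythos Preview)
Your proof is correct and follows essentially the same route as the paper: verify the identity on a jointly orthogonal basis in the diagonalisable case, then pass to the general case by $d_\infty$-approximation using the continuity of the Gérardin action established in Section~\ref{sect:14}. The only cosmetic differences are that the paper invokes Theorem~\ref{thm:dinftyenvelope} rather than its precursor Proposition~\ref{prop:appxenvelope} (either suffices here), and that the paper states directly that $\det$ is $n$-Lipschitz from the definition, whereas you route through Theorem~\ref{thm:voldet} and Proposition~\ref{prop:vol}(3) to reach the same conclusion.
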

\begin{proof}
    If $\norm\in \cN^\diag(V)$, we pick a jointly orthogonalising basis $(v_i)_i$ for $\norm_0$ and $\norm$, which is then also orthogonal for $\norm_0.\norm$, so that
    $$\det\norm[v_1\wedge\dots\wedge v_n]=\prod_i \norm[v_i],\quad\det\norm[v_1\wedge\dots\wedge v_n]_0=\prod_i \norm[v_i]_0,$$
    $$\det\norm_0.\norm[v_1\wedge\dots\wedge v_n]=\prod_i \norm[v_i]_0\norm[v_i],$$
    and the result follows. In general, we $d_\infty$-approximate $\norm$ by a sequence $(\norm_k)_k\in \cN^\diag(V)$. Then
    \begin{align*}
        &d_\infty(\det(\norm_0.\norm),\det\norm_0\cdot\det\norm)\\
        &\leq d_\infty(\det(\norm_0.\norm),\det(\norm_0.\norm_k))+d_\infty(\det(\norm_0.\norm_k),\det\norm_0\cdot\det\norm_k)\\
        &+d_\infty(\det\norm_0.\det\norm_k,\det\norm_0.\det\norm)\\
        &\leq nd_\infty(\norm_0.\norm,\norm_0.\norm_k)+d_\infty(\det(\norm_0.\norm_k),\det\norm_0\cdot\det\norm_k)\\
        &+d_\infty(\det\norm_k,\det\norm)\\
        &\leq nd_\infty(\norm,\norm_k)+d_\infty(\det(\norm_0.\norm_k),\det\norm_0\cdot\det\norm_k)+nd_\infty(\norm_k,\norm).
    \end{align*}
    Here, we have used that $\det$ is $n$-Lipschitz as is seen from the definition, as well as Theorem \ref{thm:dinftyenvelope}. The middle term vanishes by the first part of the proof, proving the result.
\end{proof}

From Theorem \ref{thm:voldet} we then have that:
\begin{corollary}\label{coro:volenvelope}
    Given $\norm,\norm'\in \cN(V)$, $\norm_0,\norm'_0\in \cN_0(V)$, we have that
    $$\vol(\norm_0.\norm,\norm'_0.\norm')=\vol(\norm_0,\norm'_0)+\vol(\norm,\norm').$$
\end{corollary}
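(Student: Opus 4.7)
The plan is to derive this directly from Theorem \ref{thm:voldet} together with the multiplicativity statement in the preceding lemma. Since the lemma tells us exactly that $\det(\norm_0.\norm)$ is the pointwise product $\det\norm_0 \cdot \det\norm$ in $\cN(\det V)$, and since $\vol$ is expressible as a single log-ratio of determinants, the claim should reduce to a one-line computation.

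Concretely, I would fix any nonzero $\delta \in \det V$. By Theorem \ref{thm:voldet},
$$\vol(\norm_0.\norm,\norm'_0.\norm') = \log \frac{\det(\norm'_0.\norm')[\delta]}{\det(\norm_0.\norm)[\delta]}.$$
Applying the preceding lemma to both numerator and denominator gives
$$\det(\norm_0.\norm)[\delta] = \det\norm_0[\delta] \cdot \det\norm[\delta], \qquad \det(\norm'_0.\norm')[\delta] = \det\norm'_0[\delta] \cdot \det\norm'[\delta].$$
Substituting, the ratio splits as
$$\log \frac{\det\norm'_0[\delta]}{\det\norm_0[\delta]} + \log \frac{\det\norm'[\delta]}{\det\norm[\delta]},$$
which by Theorem \ref{thm:voldet} again equals $\vol(\norm_0,\norm'_0) + \vol(\norm,\norm')$. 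Note that $\delta$ is nonzero and that both factor norms $\det\norm_0[\delta], \det\norm[\delta]$ are positive, so all the logarithms are finite and the separation is legitimate.

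There is no real obstacle here: the entire content of the corollary is the multiplicative identity already proven in the preceding lemma (which itself required the density-plus-approximation argument and Theorem \ref{thm:dinftyenvelope} to handle non-diagonalisable norms). The corollary is just the logarithmic translation of that identity through the determinant characterisation of $\vol$. The only thing to be slightly careful about is that $\vol$ is independent of the choice of $\delta$, which is built into Theorem \ref{thm:voldet}, so one is free to use the same $\delta$ on both sides of the equation.
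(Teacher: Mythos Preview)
Your proof is correct and follows exactly the approach the paper takes: the corollary is stated immediately after the determinant lemma with the one-line justification ``From Theorem \ref{thm:voldet} we then have that,'' and your argument simply unpacks that sentence.
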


This now allows us to prove our desired estimate.
\begin{theorem}\label{thm:d1envelope}
    Given $\norm,\norm'\in \cN(V)$, $\norm_0,\norm'_0\in \cN_0(V)$, we have that
    $$d_1(\norm_0.\norm,\norm'_0.\norm')\leq d_1(\norm_0,\norm'_0)+d_1(\norm,\norm').$$
\end{theorem}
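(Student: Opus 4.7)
The plan is to reduce the $d_1$-inequality to a volume computation via the Darvas-type formula of Proposition \ref{prop:d1vol}, and then to use the envelope cocycle formula of Corollary \ref{coro:volenvelope} after controlling the supremum $(\norm_0.\norm)\vee(\norm'_0.\norm')$ by an envelope.

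First, I would apply Proposition \ref{prop:d1vol} to write
$$d_1(\norm_0.\norm,\norm'_0.\norm')=\vol(\norm_0.\norm,(\norm_0.\norm)\vee(\norm'_0.\norm'))+\vol(\norm'_0.\norm',(\norm_0.\norm)\vee(\norm'_0.\norm')).$$
The key geometric step is the pointwise comparison
$$(\norm_0.\norm)\vee(\norm'_0.\norm')\ \leq\ (\norm_0\vee\norm'_0).(\norm\vee\norm'),$$
which I would establish directly from the envelope definition: the left-hand side is a norm in $\cN(V)$, and pointwise it is bounded above by $(\norm_0\vee\norm'_0)\cdot(\norm\vee\norm')$ (each term of the maximum is dominated by its own product, which is dominated by the product of the maxima). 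Since $(\norm_0\vee\norm'_0).(\norm\vee\norm')$ is the largest such norm, the inequality follows.

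Next, using that $\vol$ is increasing in its second variable with respect to $\leq$ (Proposition \ref{prop:vol}(5)), I would replace $(\norm_0.\norm)\vee(\norm'_0.\norm')$ by $(\norm_0\vee\norm'_0).(\norm\vee\norm')$ in both volume terms above. Corollary \ref{coro:volenvelope} then splits each term:
\begin{align*}
\vol(\norm_0.\norm,(\norm_0\vee\norm'_0).(\norm\vee\norm'))&=\vol(\norm_0,\norm_0\vee\norm'_0)+\vol(\norm,\norm\vee\norm'),\\
\vol(\norm'_0.\norm',(\norm_0\vee\norm'_0).(\norm\vee\norm'))&=\vol(\norm'_0,\norm_0\vee\norm'_0)+\vol(\norm',\norm\vee\norm').
\end{align*}
Summing the two lines and reapplying Proposition \ref{prop:d1vol} on both pairs $(\norm_0,\norm'_0)$ and $(\norm,\norm')$ yields exactly $d_1(\norm_0,\norm'_0)+d_1(\norm,\norm')$, completing the proof.

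The only nontrivial step here is the envelope comparison $(\norm_0.\norm)\vee(\norm'_0.\norm')\leq(\norm_0\vee\norm'_0).(\norm\vee\norm')$, which I expect to be the one place where care is needed (though it reduces to the universal property of the envelope); everything else is a clean combination of the volume cocycle for $\cN_0$-actions on $\cN(V)$ and the Darvas formula for $d_1$. Notably, this argument avoids any passage through diagonalisability or $d_\infty$-approximation, in contrast to the $d_\infty$-case handled in Theorem \ref{thm:dinftyenvelope}.
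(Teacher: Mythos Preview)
Your proof is correct and follows essentially the same route as the paper: the key envelope comparison $(\norm_0.\norm)\vee(\norm'_0.\norm')\leq(\norm_0\vee\norm'_0).(\norm\vee\norm')$, monotonicity of $\vol$, the cocycle formula of Corollary \ref{coro:volenvelope}, and the Darvas formula of Proposition \ref{prop:d1vol} are exactly the ingredients the paper uses, in the same order. The paper additionally opens with the ordered special case $\norm\leq\norm'$, $\norm_0\leq\norm'_0$ as a warm-up, which you rightly omit since it is subsumed by the general argument.
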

\begin{proof}
    Assume first that $\norm'\geq\norm$ and $\norm'_0\geq\norm_0$. Then $\norm'_0.\norm'\geq \norm_0.\norm$ by the envelope definition, so all $d_1$'s in the statement of the proposition are computed by $\vol$ by Proposition \ref{prop:d1vol}, hence the result holds (with equality) by Corollary \ref{coro:volenvelope}.

    We now observe that $$(\norm_0.\norm)\vee(\norm_0'.\norm')\leq (\norm_0\vee\norm'_0).(\norm\vee\norm').$$
    Indeed, let $\norm_*\leq \norm_0\norm$ and $\norm'_*\leq \norm_0'\norm'$. Then, 
    $$\norm_*\leq (\norm_0\vee\norm'_0)\cdot(\norm\vee\norm'),\quad \norm'_*\leq (\norm_0\vee\norm'_0)\cdot (\norm\vee\norm')$$
    hence
    $$\norm_*\vee\norm'_*\leq (\norm_0\vee\norm'_0)\cdot (\norm\vee\norm').$$
    In particular, we can pick $\norm_*:=\norm_0.\norm$ and $\norm'_*:=\norm'_0.\norm'$. Thus their pointwise maximum is a candidate for the envelope defining $(\norm_0\vee\norm'_0).(\norm\vee\norm')$, hence it is smaller. Using this, the fact that $\vol$ is increasing in the second variable, and repeatedly Proposition \ref{prop:d1vol}, we find
    \begin{align*}
        &d_1(\norm_0.\norm,\norm'_0.\norm')\\
        &=\vol(\norm_0.\norm, (\norm_0.\norm)\vee(\norm_0'.\norm'))+\vol(\norm_0'.\norm',(\norm_0.\norm)\vee(\norm_0'.\norm'))\\
        &\leq \vol(\norm_0.\norm, (\norm_0\vee\norm'_0).(\norm\vee\norm'))+\vol(\norm_0'.\norm',(\norm_0\vee\norm'_0).(\norm\vee\norm'))\\
        &=\vol(\norm_0,\norm_0\vee \norm'_0)+\vol(\norm,\norm\vee\norm')+\vol(\norm'_0,\norm_0\vee\norm_0')+\vol(\norm',\norm\vee\norm')\\
        &=d_1(\norm_0,\norm_0')+d_1(\norm,\norm').
    \end{align*}
\end{proof}

\begin{remark}\label{rem:diag}
    If $\norm_0,\norm'_0$ and $\norm$ admit a joint diagonalising basis $(v_i)_i$, then
    $$\lambda_i(\norm_0.\norm,\norm'_0.\norm)=\log\frac{\norm[v_i]'_0\cdot\norm[v_i]}{\norm[v_i]_0\cdot\norm[v_i]}=\lambda_i(\norm_0,\norm'_0).$$
    That is, equality holds not only at the level of volumes, but also at the level of each successive minima. In particular, for all $p$, we have
    $$d_p(\norm_0.\norm,\norm'_0.\norm)=d_p(\norm_0,\norm'_0)$$
    in this case. Of course, it is not the case in general (unless $\dim V\leq 1$) that such a basis exists, but it will turn out to be useful in constructions where we pick a joint diagonalising basis for two norms and are allowed to modify freely one of the two norms along this basis, as is the case for Theorem C.
\end{remark}

\subsection{Geodesic segments.}\label{sect:16}

The scaling action of $\bbr_{>0}$ on $\cN_0(V)$ combined with the Gérardin action of $\cN_0(V)$ turns out to give geodesic rays in $\cN(V)$. Before studying those, we first review and improve a few results from \cite{reb:2}, \cite{goldmaniwahori}.

\begin{definition}\label{def:geod}
    Given two norms $\norm,\norm'\in \cN(V)$, we define the \textit{geodesic} joining them to be given at time $t\in[0,1]$ by the envelope
    $$\norm[v]_t:=\sup\{\norm[v]_*,\norm_*\in\cN(V),\,\norm[v']_*\leq\norm[v']^{1-t}\norm[v']'{}^t\,\,\forall v'\in V\}.$$
    In the case where $\norm,\norm'\in \cN^\diag(V)$, and given a jointly orthogonal basis $(v_i)_i$ for them, it is further given by 
    $$\norm[\sum a_i v_i]_t:=\max_i |a_i|\norm[v_i]^{1-t}\norm[v_i]'{}^t.$$
\end{definition}

This envelope definition mirrors the one in \cite{goldmaniwahori} (where they were not called geodesics, but were used in the proof of contractibility and arcwise-connectedness of $\cN(V)$ over $\bbq_p$). In \cite{reb:2}, we defined geodesics in the diagonalisable case as above, and general geodesics to be abstract $d_\infty$-limits of diagonalisable geodesic segments, which converge by completeness. The Goldman--Iwahori definition is more practical for our purposes, and turns out to coincide with the one in \cite{reb:2}, as we now show in the geodesic analogue of Proposition \ref{prop:appxenvelope}.

\begin{proposition}\label{prop:geodappx}
    Given a sequence of geodesics $(t\mapsto \norm_{k,t})_k$ whose endpoints are in $\cN^\diag(V)$ and $d_p$-approximate $\norm_0$ and $\norm_1$, then for each $t\in [0,1]$, 
    \begin{equation}\label{eq:show}
        d_p(\norm_{k,t},\norm_t)\to 0.
    \end{equation}
\end{proposition}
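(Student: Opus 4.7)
The plan is to first establish the stronger $d_\infty$-version
$$d_\infty(\norm_{k,t},\norm_t)\leq (1-t)\,d_\infty(\norm_{k,0},\norm_0)+t\,d_\infty(\norm_{k,1},\norm_1),$$
and then deduce the $d_p$-statement from it by exploiting the equivalence of the various $d_p$-metrics in the finite-dimensional setting. For this equivalence, one has $d_1\leq d_p\leq d_\infty$ by definition, while the spectral formulas $d_\infty=\max_i|\lambda_i|$ (Proposition \ref{prop:succmin}) and $d_1=n\mi\sum_i|\lambda_i|$ immediately give $d_\infty\leq n\,d_1$. Consequently $d_p$-convergence in $\cN(V)$ agrees with $d_\infty$-convergence, so the endpoint hypothesis upgrades to $d_\infty$-convergence and the conclusion at each $t$ follows from its $d_\infty$-version.

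For the $d_\infty$-estimate, I would argue directly from the envelope definition, bypassing the almost-orthogonality machinery of Section \ref{sect:14}. Set $\epsilon_k:=(1-t)\,d_\infty(\norm_{k,0},\norm_0)+t\,d_\infty(\norm_{k,1},\norm_1)$. Taking the pointwise supremum over candidates in Definition \ref{def:geod}, one first obtains $\norm[v]_t\leq \norm[v]_0^{1-t}\norm[v]_1^t$ for all $v\in V$. Combining this with \eqref{eq:dinfty} applied to the pairs $(\norm_0,\norm_{k,0})$ and $(\norm_1,\norm_{k,1})$, raised to the $(1-t)$-th and $t$-th powers respectively, yields
$$\norm[v]_t\leq e^{\epsilon_k}\,\norm[v]_{k,0}^{1-t}\norm[v]_{k,1}^t.$$
The rescaled norm $e^{-\epsilon_k}\norm_t$ is therefore a candidate for the envelope defining $\norm_{k,t}$, so $e^{-\epsilon_k}\norm_t\leq \norm_{k,t}$; the symmetric argument (exchanging the roles of the two geodesics) gives $e^{-\epsilon_k}\norm_{k,t}\leq \norm_t$ as well, and \eqref{eq:dinfty} then delivers the announced $d_\infty$-bound.

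The main obstacle here is essentially absent: the envelope characterisation is remarkably compatible with multiplicative perturbations of the defining gauge, so none of the $\lambda$-orthogonality technology used to prove Proposition \ref{prop:appxenvelope} is needed. If anything, the only conceptual subtlety is the reduction from $d_p$ to $d_\infty$, which is a general feature of finite-dimensional Goldman--Iwahori spaces immediate from the spectral formulas, and which could equivalently be handled by observing that $\cN^\diag(V)$ is $d_\infty$-dense so that the approximating diagonalisable sequences can be chosen to $d_\infty$-converge.
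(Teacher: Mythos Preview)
Your proof is correct and takes a genuinely different route from the paper. The paper picks, for each $k$, a joint orthogonal basis for $\norm_{k,0}$ and $\norm_{k,1}$, shows this basis is $\lambda$-orthogonal for $\norm_t$ via Lemmas~\ref{lem:orthodinfty} and an analogue of Lemma~\ref{lem:lambdaorthoenvelope}, and then applies Lemma~\ref{lem:gilambdaortho} to bound $d_\infty(\norm_t,\norm_{k,t})$, ultimately obtaining the estimate with a constant~$3$:
\[
d_\infty(\norm_t,\norm_{k,t})\leq 3\bigl((1-t)\,d_\infty(\norm_0,\norm_{k,0})+t\,d_\infty(\norm_1,\norm_{k,1})\bigr).
\]
Your argument, by contrast, exploits only the envelope characterisation in Definition~\ref{def:geod} together with the trivial observation that a scalar multiple of a norm is again a norm, and yields the sharp constant~$1$. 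In fact your estimate is precisely the $d_\infty$ Busemann inequality of Theorem~\ref{thm:busemann} (with one geodesic constant), which the paper only establishes \emph{after} Proposition~\ref{prop:geodappx}, and even then by first treating diagonalisable endpoints via Lemma~\ref{lem:giortho} and then approximating. The same envelope trick would also streamline the proof of Theorem~\ref{thm:dinftyenvelope}, since $\norm_0.\norm[v]\leq\norm_0[v]\cdot\norm[v]$ plays the same role there as $\norm_t[v]\leq\norm_0[v]^{1-t}\norm_1[v]^t$ does here. What the paper's approach buys is a systematic almost-orthogonality calculus (Section~\ref{sect:14}) that is reused elsewhere, e.g.\ in Lemma~\ref{lem:minkowski}; but for the present proposition your direct argument is both shorter and sharper. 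The reduction from $d_p$ to $d_\infty$ via the equivalence $d_1\leq d_p\leq d_\infty\leq n\,d_1$ is identical to the paper's final step.
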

\begin{proof}
    For each $k$, pick a joint orthogonal basis $(v_{k,i})$ for $\norm_{k,0}$ and $\norm_{k,1}$ which then also orthogonalises $\norm_{k,t}$ with $\norm[v_{k,i}]_{k,t}=\norm[v_{k,i}]_{k,0}^{1-t}\norm[v_{k,i}]_{k,1}^t$. Then, by Lemma \ref{lem:orthodinfty} this basis is $C_{k,0}:=e^{-2d_\infty(\norm_0,\norm_{k,0})}$-orthogonal for $\norm_0$ and $C_{k,1}:=e^{-2d_\infty(\norm_1,\norm_{k,1})}$-orthogonal for $\norm_1$. Analogously to Lemma \ref{lem:lambdaorthoenvelope} we see that, for $v=\sum a_i v_{k,i}$,
    \begin{align*} 
        C_{k,0}^{1-t}C_{k,1}^t\max_i |a_i|\norm[v_{k,i}]_0^{1-t}\norm[v_{k,i}]_1^t&\leq (C_{k,0}\max_i |a_i|\norm[v_{k,i}]_0)^{1-t} (C_{k,1}\max_i |a_i|\norm[v_{k,i}]_1)^t\\
        &\leq \norm[v]_0^{1-t}\norm[v]_1^t
    \end{align*}
    hence
    $$C_{k,0}^{1-t}C_{k,1}^t\max_i |a_i|\norm[v_{k,i}]_0^{1-t}\norm[v_{k,i}]_1^t\leq \norm_t,$$
    hence this basis is $C_{k,t}:=C_{k,0}^{1-t}C_{k,1}^t$-orthogonal for $\norm_t$.
    By Lemma \ref{lem:gilambdaortho} we then have
    $$d_\infty(\norm_t,\norm_{k,t})\leq \log C_{k,t}\mi\max_i \left|\frac{\norm[v_{k,i}]_{k,0}^{1-t}\norm[v_{k,i}]_{k,1}^t}{\norm[v_{k,i}]_t}\right|.$$
    Using that
    $$\log \frac{\norm[v_{k,i}]_{k,0}^{1-t}\norm[v_{k,i}]_{k,1}^t}{\norm[v_{k,i}]_t}\leq \log C_{k,t}\mi \frac{\norm[v_{k,i}]_{k,0}^{1-t}\norm[v_{k,i}]_{k,1}^t}{\norm[v_{k,i}]_{0}^{1-t}\norm[v_{k,i}]_{1}^t},$$
    $$\log\frac{\norm[v_{k,i}]_t}{\norm[v_{k,i}]_{k,0}^{1-t}\norm[v_{k,i}]_{k,1}^t}\leq \log \frac{\norm[v_{k,i}]_{0}^{1-t}\norm[v_{i}]_{k,1}^t}{\norm[v_{k,i}]_{k,0}^{1-t}\norm[v_{k,i}]_{k,1}^t}$$
    we then find that
    $$d_\infty(\norm_t,\norm_{k,t})\leq 3((1-t)d_\infty(\norm_0,\norm_{k,0})+td_\infty(\norm_1,\norm_{k,1}))\to_{k\to\infty} 0.$$

    Finally, for general $p$, this follows from the fact that all $d_p$ norms are equivalent, see below \cite[Theorem 3.1]{boueri}.
\end{proof}

\begin{theorem}\label{thm:busemann}
    Let $t\mapsto \norm_t,\norm'_t$ be two geodesics in $\cN(V)$. Then,
    $$d_\infty(\norm_t,\norm'_t)\leq (1-t)d_\infty(\norm_0,\norm'_0)+td_\infty(\norm_1,\norm'_1),$$
    $$d_1(\norm_t,\norm'_t)\leq (1-t)d_1(\norm_0,\norm'_0)+td_1(\norm_1,\norm'_1).$$
\end{theorem}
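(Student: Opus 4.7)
The strategy is to prove both inequalities first in the diagonalisable setting where each pair of endpoints admits a joint orthogonal basis, and then extend to the general case via $d_\infty$-approximation using Proposition~\ref{prop:geodappx} together with continuity of the distances under such approximation.

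For $d_\infty$, I would pick a joint orthogonal basis $(v_i)_i$ for $\norm_0,\norm_1$ and $(w_i)_i$ for $\norm'_0,\norm'_1$. These bases also orthogonalise $\norm_t$ and $\norm'_t$ respectively, with the explicit log-linear formula $\norm[v_i]_t = \norm[v_i]_0^{1-t}\norm[v_i]_1^t$ (and symmetrically for $(w_i)$). By the envelope definition one has $\norm[v_i]'_t \leq (\norm[v_i]'_0)^{1-t}(\norm[v_i]'_1)^t$, so taking logarithms yields
\[\log\frac{\norm[v_i]'_t}{\norm[v_i]_t} \leq (1-t)\log\frac{\norm[v_i]'_0}{\norm[v_i]_0} + t\log\frac{\norm[v_i]'_1}{\norm[v_i]_1},\]
with each term on the right bounded by $d_\infty(\norm_0,\norm'_0)$ or $d_\infty(\norm_1,\norm'_1)$ via Lemma~\ref{lem:giortho} at the endpoints. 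A symmetric argument handles the $(w_i)$ ratios, and a final application of Lemma~\ref{lem:giortho} to express $d_\infty(\norm_t,\norm'_t)$ as a maximum of log-ratios on $(v_i)$ and $(w_i)$ completes the proof.

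For $d_1$, I would rewrite $d_1(\norm_t,\norm'_t) = \vol(\norm_t,\norm_t\vee\norm'_t) + \vol(\norm'_t,\norm_t\vee\norm'_t)$ via Proposition~\ref{prop:d1vol}, and let $\tilde\sigma_t$ denote the geodesic between $\norm_0\vee\norm'_0$ and $\norm_1\vee\norm'_1$. Since the gauge $(\norm_0\vee\norm'_0)^{1-t}(\norm_1\vee\norm'_1)^t$ dominates both $\norm_0^{1-t}\norm_1^t$ and $(\norm'_0)^{1-t}(\norm'_1)^t$ pointwise, the envelope characterisation of each geodesic forces $\norm_t,\norm'_t\leq\tilde\sigma_t$, hence $\norm_t\vee\norm'_t \leq \tilde\sigma_t$. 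Monotonicity of $\vol$ in the second variable (Proposition~\ref{prop:vol}) together with the cocycle identity with reference $\norm_\triv$ then gives
\[d_1(\norm_t,\norm'_t) \leq 2\vol(\norm_\triv,\tilde\sigma_t) - \vol(\norm_\triv,\norm_t) - \vol(\norm_\triv,\norm'_t).\]
Granting that each of these three volume terms is affine in $t$, the right-hand side becomes linear in $t$, and evaluating at $t=0,1$ recovers $d_1(\norm_0,\norm'_0)$ and $d_1(\norm_1,\norm'_1)$ via Proposition~\ref{prop:d1vol} applied to $\tilde\sigma_0=\norm_0\vee\norm'_0$ and $\tilde\sigma_1=\norm_1\vee\norm'_1$, delivering the convex-combination bound.

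The main obstacle is therefore the affineness of $t\mapsto\vol(\norm_\triv,\rho_t)$ along an arbitrary geodesic $\rho_t$. By Theorem~\ref{thm:voldet} this is equivalent to the determinantal identity $\det\rho_t = (\det\rho_0)^{1-t}(\det\rho_1)^t$ as norms on the one-dimensional space $\det V$. The identity is immediate when $\rho_0,\rho_1$ share a joint orthogonal basis, via $\det\rho[v_1\wedge\cdots\wedge v_n] = \prod_i\rho(v_i)$, but such a basis need not exist over non-spherically complete fields even when the endpoints are individually diagonalisable. I expect the remedy to be the usual one: pass to a spherical completion $\mfl$ of $\field$, establish the identity for $\rho_{0,\mfl},\rho_{1,\mfl}$, and descend back using Proposition~\ref{prop:fieldextdp} together with the $d_\infty$-Lipschitzness of $\det$. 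With this in hand, the diagonalisable case of both inequalities is complete, and a final $d_\infty$-approximation via Proposition~\ref{prop:geodappx} extends the Busemann bounds to all $\norm_0,\norm_1,\norm'_0,\norm'_1\in\cN(V)$.
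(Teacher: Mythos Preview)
Your $d_\infty$ argument is correct and matches the paper's sketch: it refers to the method of Theorem~\ref{thm:dinftyenvelope} via Lemma~\ref{lem:giortho}, then extends to the general case by Proposition~\ref{prop:geodappx}.

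For $d_1$ the paper simply cites \cite[Corollary~2.4.8]{reb:2} for the diagonalisable case, so your self-contained route through Proposition~\ref{prop:d1vol}, the comparison $\norm_t\vee\norm'_t\leq\tilde\sigma_t$, and affineness of $\vol$ along geodesics is genuinely different. It is correct in outline, but two points need adjustment. First, $\norm_\triv$ lives in $\cN_0(V)$, not in $\cN(V)$, so $\vol(\norm_\triv,\tilde\sigma_t)$ is undefined when $|\cdot|\neq|\cdot|_0$; replace $\norm_\triv$ by any fixed reference $\rho\in\cN(V)$ and the cocycle step goes through unchanged.

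Second, and more substantively, the ``main obstacle'' you flag is illusory: any two norms in $\cN^\diag(V)$ \emph{always} admit a joint orthogonal basis, over an arbitrary non-Archimedean field (this is \cite[Proposition~1.14]{boueri}, and it is used implicitly in Definition~\ref{def:geod} and in the proof of Proposition~\ref{prop:geodappx}). The equivalence with spherical completeness concerns joint diagonalisability of \emph{arbitrary} norms, not of diagonalisable ones. Since in your diagonalisable case $\norm_0\vee\norm'_0$ and $\norm_1\vee\norm'_1$ both lie in $\cN^\diag(V)$ by Lemma~\ref{lem:diagenvelope}, they share a joint orthogonal basis, and affineness of $\vol$ along $\tilde\sigma_t$ is then immediate from the explicit geodesic formula. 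The spherical-completion detour is therefore unnecessary---and as you sketch it, it would also require knowing that $(\rho_t)_\mfl$ is the geodesic between $(\rho_0)_\mfl$ and $(\rho_1)_\mfl$, which is Corollary~\ref{coro:geodesicbasechange} and appears only after Theorem~\ref{thm:busemann} in the paper's logical order.
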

\begin{proof}
    Assuming all the endpoints belong to $\cN^\diag(V)$, then the $d_\infty$-version of the inequality is proven along the lines of Theorem \ref{thm:dinftyenvelope} using Lemma \ref{lem:giortho}, see also the proof of \cite[Proposition 2.6]{goldmaniwahori}. Much as the proof of Theorem \ref{thm:dinftyenvelope} in the non-diagonalisable case followed from Proposition \ref{prop:appxenvelope}, the result in the non-diagonalisable case follows from Proposition \ref{prop:geodappx}.

    The $d_1$ inequality was proven in the diagonalisable case in \cite[Corollary 2.4.8]{reb:2}. The general case follows from Proposition \ref{prop:geodappx} above.
\end{proof}

As is checked by computing successive minima, $\vol$ is affine along geodesics, and geodesic segments are indeed $d_1$ geodesics, in the diagonalisable case. They are not \textit{a priori} unique among $d_1$ geodesics, but they are uniquely geodesic for the $d_2$ distance (and in fact for $d_p$ distances, $p>1$) -- hence are \textit{distinguished} among $d_1$ geodesics. Thus, although we focus on $d_1$ in this article, this characterisation will turn out to be useful.
\begin{lemma}\label{lem:lambdageod}
    Given $\norm,\norm'\in \cN(V)$ and the geodesic segment $t\mapsto \norm_t$ from Definition \ref{def:geod} joining them, then for all $0\leq t,s\leq 1$,
    $$\vol(\norm_t,\norm_s)=(s-t)\vol(\norm,\norm')$$
    and for all $p\geq 1$,
    $$d_p(\norm_t,\norm_s)=|s-t|d_p(\norm,\norm').$$
\end{lemma}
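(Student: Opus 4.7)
The plan is to first handle the diagonalisable case by direct computation on a jointly orthogonal basis, and then pass to the general case by $d_\infty$-approximation using Proposition \ref{prop:geodappx}.

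Assume first $\norm,\norm'\in\cN^\diag(V)$ and pick a jointly orthogonal basis $(v_i)_i$ for them. The explicit formula in Definition \ref{def:geod} shows that this basis remains orthogonal for every $\norm_t$, with $\norm[v_i]_t=\norm[v_i]^{1-t}\norm[v_i]'{}^t$. Consequently $(v_i)_i$ is jointly orthogonal for the pair $(\norm_t,\norm_s)$, and after reordering so that the ratios $\norm[v_i]'/\norm[v_i]$ are decreasing (or increasing, depending on the sign of $s-t$) we read off
$$\lambda_i(\norm_t,\norm_s)=\log\frac{\norm[v_i]_s}{\norm[v_i]_t}=(s-t)\log\frac{\norm[v_i]'}{\norm[v_i]}=(s-t)\lambda_{\sigma(i)}(\norm,\norm')$$
for a suitable permutation $\sigma$. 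Summing the $\lambda_i$'s then gives $\vol(\norm_t,\norm_s)=(s-t)\vol(\norm,\norm')$, while summing $|\lambda_i|^p$ yields $d_p(\norm_t,\norm_s)^p=|s-t|^p\,d_p(\norm,\norm')^p$, proving both identities in the diagonalisable case.

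For the general case, approximate $\norm$ and $\norm'$ in $d_\infty$ by sequences $(\norm_k)_k,(\norm'_k)_k$ in $\cN^\diag(V)$, and let $t\mapsto\norm_{k,t}$ denote the corresponding geodesic segments. By the diagonalisable case,
$$\vol(\norm_{k,t},\norm_{k,s})=(s-t)\vol(\norm_k,\norm'_k),\qquad d_p(\norm_{k,t},\norm_{k,s})=|s-t|\,d_p(\norm_k,\norm'_k).$$
By Proposition \ref{prop:geodappx}, $\norm_{k,t}\to\norm_t$ and $\norm_{k,s}\to\norm_s$ in $d_p$ (and also in $d_\infty$ by the same proof applied with $p=\infty$). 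Since $\vol$ is $1$-Lipschitz in each variable with respect to $d_\infty$ by Proposition \ref{prop:vol}, and $d_p$ is $1$-Lipschitz in each variable with respect to itself, both sides of each identity pass to the limit, giving the stated equalities.

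The main (essentially only) obstacle is handling the non-diagonalisable case, which is dispatched exactly as in the proofs of Theorems \ref{thm:dinftyenvelope} and \ref{thm:busemann} via Proposition \ref{prop:geodappx}; the diagonalisable computation itself is routine because the explicit product formula for $\norm_t$ along a joint orthogonal basis makes the successive minima of $(\norm_t,\norm_s)$ a linear rescaling of those of $(\norm,\norm')$.
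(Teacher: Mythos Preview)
Your proof is correct and follows the same approach as the paper: compute directly in a joint orthogonal basis in the diagonalisable case, then pass to the general case by $d_\infty$-approximation via Proposition \ref{prop:geodappx}. The paper's own proof is the terse one-liner version of exactly what you wrote out.
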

\begin{proof}
    In the diagonalisable case, this is an immediate computation, and the result follows in the non-diagonalisable case from $d_\infty$-approximation using Proposition \ref{prop:geodappx} above.
\end{proof}

\begin{proposition}\label{prop:cat0}
    $(\cN(V),d_2)$ is a CAT(0) metric space. In particular, it is uniquely geodesic, with geodesic given by Definition \ref{def:geod}.
\end{proposition}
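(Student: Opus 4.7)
The plan is to reduce via ground field extension to the case where $(\field, |\cdot|)$ is spherically complete, exploit the resulting Bruhat--Tits-style decomposition of $\cN(V)$ into Euclidean apartments, and verify the CAT(0) (CN) inequality in that setting.

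By Proposition \ref{prop:fieldextdp}, taking a spherical completion $\mfl$ of $\field$ yields an isometric embedding $(\cN(V), d_2) \hookrightarrow (\cN(V \otimes_\field \mfl), d_2)$. By Lemma \ref{lem:lambdageod} the envelope segment of Definition \ref{def:geod} is parametrised proportionally to $d_2$-arc length, so its image under this embedding is a $d_2$-geodesic in $\cN(V \otimes_\field \mfl)$. Once CAT(0) is established for the larger space, unique geodesicity identifies this image with the unique $d_2$-geodesic in $\cN(V \otimes_\field \mfl)$; in particular the image of $\cN(V)$ is closed under $d_2$-geodesics, and the CAT(0) inequality descends. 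The problem therefore reduces to the spherically complete case, where every norm is diagonalisable.

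In that setting, for each ordered basis $\mathbf{v} = (v_i)_i$ of $V$, the apartment $\cA_\mathbf{v}$ of norms orthogonal for $\mathbf{v}$ is identified via $\norm \mapsto (-\log\norm[v_i])_i$ with $\bbr^n$; the expression of successive minima in the jointly diagonalisable case together with Lemma \ref{lem:lambdageod} shows that this identification is an isometry onto $(\bbr^n, n^{-1/2}\|\cdot\|_2)$ sending envelope segments to affine segments. Thus apartments are totally geodesic Euclidean subspaces of $\cN(V)$, and by the joint orthogonalisation theorem any pair of norms shares an apartment. To verify the CN inequality
$$d_2(x, m)^2 \leq \tfrac{1}{2} d_2(x, y)^2 + \tfrac{1}{2} d_2(x, z)^2 - \tfrac{1}{4} d_2(y, z)^2$$
for $m$ the midpoint of the envelope geodesic from $y$ to $z$, I would pick an apartment $\cA_\mathbf{v}$ jointly diagonalising $y$ and $z$ (which then also contains $m$) and construct a $d_2$-nonexpanding retraction $\rho \colon \cN(V) \to \cA_\mathbf{v}$ fixing $y, z, m$. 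The inequality then reduces to the parallelogram identity in the Euclidean apartment $\cA_\mathbf{v}$ applied to $\rho(x), y, z$.

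With CAT(0) in hand, unique geodesicity is automatic, and Lemma \ref{lem:lambdageod} identifies the envelope segment of Definition \ref{def:geod} as the unique $d_2$-geodesic. The principal obstacle is the construction of the apartment retraction for a triple of norms lacking a common orthogonal basis, which is the technical heart of any Bruhat--Tits-style argument; a potentially cleaner alternative would be to verify the CN inequality directly by induction on $\dim V$, using the orthogonal splitting provided by joint diagonalisation of the pair $(y, z)$ to decompose $d_2(x, \cdot)^2$ into a sum of one-dimensional squared distances and apply the scalar parallelogram identity coordinate-by-coordinate.
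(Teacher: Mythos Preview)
Your reduction to the spherically complete case via ground field extension is sound and elegant, but the proof has a genuine gap at the point you yourself flag: you do not construct the $d_2$-nonexpanding retraction $\rho\colon\cN(V)\to\cA_\mathbf{v}$. Without it (or the coordinate-by-coordinate alternative, which you also leave as a sketch), the CN inequality remains unproven. The existence of such retractions is indeed the substance of the Bruhat--Tits theory, and cannot simply be asserted here; in particular, the natural candidate---the Gram--Schmidt/Iwasawa projection along a chosen basis---requires a nontrivial argument to check $d_2$-contractivity (this is essentially what is carried out in \cite[Example 3.2]{boueri}).

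The paper's proof sidesteps this entirely. It cites \cite[Example 3.2]{boueri} for the fact that $(\cN^\diag(V),d_2)$ is already CAT(0), and then passes to general (non-diagonalisable) norms by a different route from yours: rather than embedding into a spherical completion, it uses $d_\infty$-density of $\cN^\diag(V)$ in $\cN(V)$ together with Proposition~\ref{prop:geodappx} to approximate the CN inequality. Your ground-field-extension reduction is a perfectly good alternative to this density argument and has the advantage of landing in a space where \emph{all} norms are diagonalisable (so one can invoke the cited result directly without any further approximation); but you still need either to cite the CAT(0) property of the diagonalisable space or to actually carry out the retraction/induction argument you outline.
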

\begin{proof}
     By \cite[Example 3.2]{boueri}, $(\cN^\diag(V),d_2)$ is CAT(0), and it was seen in Lemma \ref{lem:lambdageod} that geodesics in Definition \ref{def:geod} are $d_2$-geodesics. The general result follows from approximating the CAT(0) inequality using density of diagonalisable norms and Proposition \ref{prop:geodappx}.
\end{proof}

\begin{corollary}\label{coro:geodesicbasechange}
    If $t\mapsto \norm_t$ is a geodesic, and $(\mfl,|\cdot|_\mfl)$ is a valued field extension of $(\field,|\cdot|)$, then $t\mapsto \norm_{t,\mfl}$ is a geodesic.
\end{corollary}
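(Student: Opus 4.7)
The plan is to show directly that $t \mapsto \norm_{t,\mfl}$ coincides with the geodesic (in the sense of Definition \ref{def:geod}) joining $\norm_{0,\mfl}$ and $\norm_{1,\mfl}$ in $\cN(V\otimes_\field \mfl)$. I would handle the diagonalisable case first and then reduce the general case to it via the approximation results of Section \ref{sect:16}.

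\medskip

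\noindent\emph{Diagonalisable case.} Assume $\norm_0,\norm_1\in\cN^\diag(V)$ and pick a joint orthogonal basis $(v_i)_i$ for them. By Definition \ref{def:geod}, this basis is also orthogonal for $\norm_t$ with $\norm[v_i]_t=\norm[v_i]_0^{1-t}\norm[v_i]_1^t$. By \cite[Proposition 1.25(iv)]{boueri}, the basis $(v_i)_i$ remains orthogonal after ground field extension for $\norm_{0,\mfl}$, $\norm_{1,\mfl}$ and $\norm_{t,\mfl}$, with unchanged values on the basis. Writing out the basis formula then shows that $\norm_{t,\mfl}$ is exactly the geodesic from Definition \ref{def:geod} joining $\norm_{0,\mfl}$ and $\norm_{1,\mfl}$ in $\cN(V\otimes_\field\mfl)$.

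\medskip

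\noindent\emph{General case.} By density of $\cN^\diag(V)$ in $(\cN(V),d_\infty)$, pick sequences $(\norm_{k,0})_k,(\norm_{k,1})_k$ in $\cN^\diag(V)$ with $d_\infty(\norm_{k,j},\norm_j)\to 0$ for $j=0,1$, and let $t\mapsto \norm_{k,t}$ be the geodesic joining them. By Proposition \ref{prop:geodappx}, $d_\infty(\norm_{k,t},\norm_t)\to 0$ for every fixed $t\in[0,1]$. Applying ground field extension, Proposition \ref{prop:groundfieldisometry} gives $d_\infty(\norm_{k,t,\mfl},\norm_{t,\mfl})\to 0$ and, at the endpoints, $d_\infty(\norm_{k,j,\mfl},\norm_{j,\mfl})\to 0$. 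Since $\norm_{k,j}\in\cN^\diag(V)$, their ground field extensions remain in $\cN^\diag(V\otimes_\field\mfl)$ by \cite[Proposition 1.25(iv)]{boueri}, so the diagonalisable case applies: $t\mapsto \norm_{k,t,\mfl}$ is the geodesic joining $\norm_{k,0,\mfl}$ and $\norm_{k,1,\mfl}$. Applying Proposition \ref{prop:geodappx} now in the larger space $V\otimes_\field\mfl$, the sequence of geodesics $\norm_{k,t,\mfl}$ converges to the geodesic joining $\norm_{0,\mfl}$ and $\norm_{1,\mfl}$. Uniqueness of $d_\infty$-limits identifies this geodesic with $\norm_{t,\mfl}$, concluding the proof.

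\medskip

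The potential obstacle here is conceptual rather than computational: one must verify that diagonalisability is preserved under ground field extension, so that Proposition \ref{prop:geodappx} can be applied both before and after extending scalars. This is precisely the content of \cite[Proposition 1.25(iv)]{boueri}, which closes the argument. Nothing deeper, such as the $d_2$-CAT(0) uniqueness of geodesics, is needed, although it provides an alternative route to identify the limit.
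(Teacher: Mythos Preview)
Your argument is correct. The diagonalisable case is handled by the explicit basis formula together with preservation of orthogonality under base change, and the general case is obtained by sandwiching Proposition~\ref{prop:geodappx} on either side of the $d_\infty$-isometry from Proposition~\ref{prop:groundfieldisometry}. The only point worth checking twice is the applicability of Proposition~\ref{prop:geodappx} on $V\otimes_\field\mfl$, which indeed requires the approximating endpoints $\norm_{k,j,\mfl}$ to be diagonalisable; you address this.

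The paper takes a different, shorter route: by Lemma~\ref{lem:lambdageod}, $t\mapsto\norm_t$ is a $d_2$-geodesic; by Proposition~\ref{prop:fieldextdp}, ground field extension is a $d_2$-isometry, so $t\mapsto\norm_{t,\mfl}$ is a $d_2$-geodesic in $\cN(V\otimes_\field\mfl)$; by the CAT(0) uniqueness of Proposition~\ref{prop:cat0}, it must therefore be \emph{the} geodesic of Definition~\ref{def:geod}. This avoids any approximation argument at the cost of invoking the CAT(0) structure. Your approach is more hands-on and, as you note, does not rely on the $d_2$-uniqueness; it stays entirely within the $d_\infty$-framework. Each route is natural given the surrounding machinery: the paper leverages the metric geometry it has just established, while you reuse the approximation technology that underlies Proposition~\ref{prop:geodappx} itself.
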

\begin{proof}
     This follows from Lemma \ref{lem:lambdageod}, Proposition \ref{prop:fieldextdp}, and the uniqueness part of Proposition \ref{prop:cat0}.
\end{proof}

Finally, the following result, akin to Berndtsson's maximum principle in the Hermitian case \cite{berndt:prob}, will be used throughout the entire article. With the envelope definition of geodesics, its proof is immediate.

\begin{lemma}[\textit{Maximum principle}, {\cite[Proposition 2.4.4]{reb:2}}]\label{lem:maxprinciple}
    Let $t\mapsto \norm_t,\norm'_t$ be two geodesics in $\cN(V)$. If $\norm'_0\leq \norm_0$ and $\norm'_1\leq\norm_1$, then for all $t\in [0,1]$,
    $$\norm'_t\leq\norm_t.$$
\end{lemma}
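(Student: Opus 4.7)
The plan is to use the envelope definition of geodesics directly. Recall from Definition \ref{def:geod} that $\norm_t$ is the pointwise supremum of all norms $\norm_*\in\cN(V)$ satisfying $\norm[v]_*\leq\norm[v]_0^{1-t}\norm[v]_1^t$ for all $v\in V$, and analogously for $\norm'_t$.

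The key observation is a monotonicity of the "gauge" $\norm^{1-t}\norm'^t$ under the hypotheses. Since $\norm'_0\leq \norm_0$ and $\norm'_1\leq\norm_1$, and since $t,1-t\geq 0$, raising to these positive exponents preserves the inequality, so for every $v\in V$,
\begin{equation*}
\norm[v]'^{1-t}_0\,\norm[v]'^t_1\leq \norm[v]^{1-t}_0\,\norm[v]^t_1.
\end{equation*}

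Now $\norm'_t$ satisfies, by its own defining envelope, $\norm[v]'_t\leq \norm[v]'^{1-t}_0\,\norm[v]'^t_1$ for every $v\in V$. Combining with the previous inequality, $\norm[v]'_t\leq \norm[v]^{1-t}_0\,\norm[v]^t_1$, so $\norm'_t$ is itself a candidate for the envelope defining $\norm_t$. Taking the pointwise supremum over all such candidates yields $\norm'_t\leq \norm_t$, which is the desired inequality. There is no real obstacle here: the whole argument is the observation that the envelope construction is monotone in the two endpoint norms, which in turn follows from the fact that the gauge $\norm^{1-t}\norm'^t$ is pointwise monotone in each of $\norm$ and $\norm'$.
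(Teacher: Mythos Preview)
Your proof is correct and is exactly the approach the paper has in mind: the text simply remarks that ``with the envelope definition of geodesics, its proof is immediate,'' and your argument spells out precisely this immediacy by showing $\norm'_t$ is a candidate in the envelope defining $\norm_t$.
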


\subsection{Geodesic rays.}\label{sect:17}

\begin{definition}
    A \textit{geodesic ray} in $\cN(V)$ is a map $[0,\infty)\ni t\mapsto \norm_t\in \cN(V)$ whose restriction to any segment $[a,b]\subset [0,\infty)$ is a geodesic in the sense of Definition \ref{def:geod}.
\end{definition}

Now, as mentioned at the beginning of section \ref{sect:16}, we notice that given $\norm_0\in \cN_0(V)$, and $\norm\in \cN(V)$, then the ray
$$[0,\infty)\ni t\mapsto (t.\norm_0).\norm$$
restricts to a geodesic on each segment, hence is a geodesic ray. Our main result in this section is that those are essentially the only possible geodesic rays in $\cN(V)$. We begin with showing that the pointwise rescaled limit of a geodesic ray yields a norm in $\cN(V)$.

\begin{lemma}
    Let $\{\norm_t\}_t$ be a nonconstant geodesic ray in $\cN(V)$. Then $\lim_{t\to\infty}\norm_t^{\frac1t}\in \cN_0(V)$.
\end{lemma}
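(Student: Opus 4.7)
The plan is to derive log-convexity and a linear Lipschitz bound for $t\mapsto\log\norm[v]_t$ from the envelope description of geodesics together with the known behaviour of successive minima, then extract the asymptotic linear rate as the value of a limit function $\mu$, and finally check that $\mu$ satisfies the trivially-valued norm axioms.

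First, I would establish two key inputs. Fix $0\leq t_1<s<t_2$ and set $\tau:=(s-t_1)/(t_2-t_1)\in(0,1)$. Since $\{\norm_t\}$ restricts to the geodesic of Definition \ref{def:geod} joining $\norm_{t_1}$ and $\norm_{t_2}$, the norm $\norm_s$ is itself a candidate in the envelope sup, and therefore
$$\norm[v]_s\leq\norm[v]_{t_1}^{1-\tau}\norm[v]_{t_2}^{\tau}$$
for all $v\in V$. Taking logarithms, $t\mapsto \log\norm[v]_t$ is convex on $[0,\infty)$ for each fixed $v\neq 0$. Next, using Lemma \ref{lem:lambdageod} (applied to the restriction of the ray to $[0,T]$ for arbitrary $T$) combined with Proposition \ref{prop:succmin}, the successive minima satisfy $\lambda_i(\norm_0,\norm_t)=t\,\lambda_i(\norm_0,\norm_1)$, so $d_\infty(\norm_0,\norm_t)=Ct$ where $C:=d_\infty(\norm_0,\norm_1)<\infty$. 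This yields the uniform linear bound $|\log\norm[v]_t-\log\norm[v]_0|\leq Ct$ for every nonzero $v$.

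Convexity of $t\mapsto \log\norm[v]_t$ implies that the slope $t\mapsto (\log\norm[v]_t-\log\norm[v]_0)/t$ is non-decreasing on $(0,\infty)$, and the bound above shows it is dominated by $C$. It therefore converges as $t\to\infty$, and since $(\log\norm[v]_0)/t\to 0$ for $v\neq 0$, so does $(\log\norm[v]_t)/t$. Define $\mu(v):=\lim_{t\to\infty}\norm[v]_t^{1/t}$ for $v\neq 0$ and $\mu(0):=0$. Then
$$\log\mu(v)=\sup_{t>0}\frac{\log\norm[v]_t-\log\norm[v]_0}{t}\in[-C,C],$$
and in particular $\mu(v)\geq e^{-C}>0$ for $v\neq 0$, so $\mu$ is positive definite.

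It remains to verify $\mu\in\cN_0(V)$. For $c\in\field^\times$, $|c|^{1/t}\to 1$, so $\mu(cv)=\mu(v)=|c|_0\mu(v)$, giving $|\cdot|_0$-homogeneity. For the ultrametric inequality, monotonicity of $x\mapsto x^{1/t}$ on $\bbr_{\geq 0}$ yields
$$\norm[v+w]_t^{1/t}\leq\max(\norm[v]_t,\norm[w]_t)^{1/t}=\max(\norm[v]_t^{1/t},\norm[w]_t^{1/t}),$$
and passing to the limit gives $\mu(v+w)\leq\max(\mu(v),\mu(w))$. Hence $\mu\in\cN_0(V)$. No step poses significant difficulty; the main conceptual input is the log-convexity of $t\mapsto\norm[v]_t$, which falls out directly from the envelope formulation of geodesics, and everything else is routine once that is in hand.
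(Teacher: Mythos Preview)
Your proof is correct and follows essentially the same approach as the paper: log-convexity from the envelope description gives existence of the limit, linearity of $t\mapsto d_\infty(\norm_0,\norm_t)$ gives the two-sided bound ensuring positivity and finiteness, and the trivially-valued norm axioms are checked directly. One minor point: the intermediate claim that $\lambda_i(\norm_0,\norm_t)=t\,\lambda_i(\norm_0,\norm_1)$ is not quite what Lemma~\ref{lem:lambdageod} and Proposition~\ref{prop:succmin} state (though it is true), but you only use the consequence $d_\infty(\norm_0,\norm_t)=Ct$, which Lemma~\ref{lem:lambdageod} gives directly for $p=\infty$, so the argument stands.
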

\begin{proof}
    By log-convexity of geodesics, this limit exists. It is then clear that it is nonnegative, and if it is finite, then it is also $|\cdot|_0$-homogeneous and satisfies the ultrametric inequality. We need only show it is finite and positive away from $0\in V$. Now, since $\{\norm_t\}_t$ is a geodesic ray, it is $d_\infty$-geodesic, hence $t\mapsto d_\infty(\norm_t,\norm_0)$ is linear, and $\lim_{t\to\infty} t\mi d_\infty(\norm_t,\norm_0)=s\mi d_\infty(\norm_s,\norm_0)=:C_\infty$ for any $s>0$. Since the ray is nonconstant, $C_\infty>0$. In particular, for all $v\in V-\{0\}$,
    $$0<e^{-C_\infty}\norm[v]_s^\frac1s\leq \lim_{t\to\infty}\norm[v]_t^\frac1t\leq e^{C_\infty}\norm[v]_s^\frac1s<\infty,$$
    proving the result.
\end{proof}

\begin{definition}
    Given a geodesic ray $\{\norm_t\}_t$ in $\cN(V)$, we define $\ell(\{\norm_t\}_t)\in\cN_0(V)$ to be the limit in the above lemma.
\end{definition}

Note in particular that
\begin{equation}\label{eq:uniq} 
\ell(\{(t.\norm_0).\norm\}_t)=\norm_0.
\end{equation}

We note the following for later use. It is clear from the definition as ground field extension preserves the values of norms on $V$.
\begin{lemma}
    Let $\{\norm_t\}_t$ be a geodesic ray in $\cN(V)$. Let $(\mfl,|\cdot|_\mfl)$ be a valued field extension, and consider the ground field extension $\{\norm_{t,\mfl}\}_t$. Then on $V$, we have
    $$\ell(\{\norm_t\}_t)=\ell(\{\norm_{t,\mfl}\}_t)$$
\end{lemma}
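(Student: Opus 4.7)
The plan is to reduce this to two previously established facts: that ground field extension preserves the values of norms on $V$ (stated in Section \ref{sect:11}), and that ground field extension of a geodesic is a geodesic (Corollary \ref{coro:geodesicbasechange}).

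First, I would observe that Corollary \ref{coro:geodesicbasechange}, applied on each finite subinterval $[a,b] \subset [0,\infty)$, implies that $\{\norm_{t,\mfl}\}_t$ is a geodesic ray in $\cN(V \otimes_\field \mfl)$. In particular, the previous lemma applies, so $\ell(\{\norm_{t,\mfl}\}_t) \in \cN_0(V \otimes_\field \mfl)$ is well-defined, and in particular the limit $\lim_{t\to\infty}\norm[w]_{t,\mfl}^{1/t}$ exists for every $w \in V \otimes_\field \mfl$.

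Second, for any $v \in V$ viewed as an element of $V \otimes_\field \mfl$, the defining property of ground field extension (recalled in Section \ref{sect:11}) gives $\norm[v]_{t,\mfl} = \norm[v]_t$ for each $t \geq 0$. Taking $t$-th roots and letting $t \to \infty$, both limits exist by the previous lemma and satisfy
$$\ell(\{\norm_{t,\mfl}\}_t)[v] \;=\; \lim_{t\to\infty} \norm[v]_{t,\mfl}^{1/t} \;=\; \lim_{t\to\infty} \norm[v]_t^{1/t} \;=\; \ell(\{\norm_t\}_t)[v],$$
which is exactly the claimed equality on $V$.

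There is no real obstacle: the statement is essentially tautological once one has in hand (i) the compatibility $\norm[\cdot]_{t,\mfl}|_V = \norm[\cdot]_t$ of ground field extension with restriction to $V$, and (ii) the fact that the ground-field extension of a geodesic segment remains a geodesic segment, so that the limit defining $\ell$ makes sense on the extended side. Both are already in the excerpt.
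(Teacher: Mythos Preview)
Your proof is correct and follows the same approach as the paper, which simply remarks that the statement is clear from the definition since ground field extension preserves the values of norms on $V$. You have just made explicit the verification that $\{\norm_{t,\mfl}\}_t$ is indeed a geodesic ray (via Corollary \ref{coro:geodesicbasechange}) so that $\ell$ is defined on the extended side, which the paper leaves implicit.
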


We now use a few preliminary definitions and results.
\begin{definition}
    Let $\underline v:=(v_i)_i$ be a basis of $v$. We define the \textit{apartment} $\bba_{\underline v}$ in $\cN^\diag(V)$ to be the set of all norms admitting $\underline v$ as an orthogonal basis.
\end{definition}

There is a map $\iota_{\underline v}:\bbr^n\hookrightarrow \bba_{\underline v}$ for all $\underline v$, given by sending $\alpha\in \bbr^n$ to
$$\iota_{\underline v}(\alpha)\left(\sum a_i v_i\right):=\max_i |a_i| e^{-\alpha_i}.$$
It is clear that this map is an injective isometry with respect to the usual $L^p$ norm on $\bbr^n$ and $d_p$ on $\cN(V)$. One can also change its basepoint (corresponding to the affine version of the above map), having fixed $\norm$ diagonalised in the basis $\underline v$, so that
$$\iota_{\underline v}(\alpha,\norm)\left(\sum a_i v_i\right):=\max_i |a_i|\norm[v_i]e^{-\alpha_i}.$$
In particular, apartments are characterised as being the image of a map of the form $\iota_{\underline v}$, and it is easily checked that geodesic rays starting at $\norm$ are the images of half-lines under the map $\iota_{\underline v}(\cdot,\norm)$. The following is a standard result in the theory of Bruhat--Tits buildings (\cite{kleinerleeb,parreau:immeubles}).

\begin{theorem}\label{thm:raystructure}
    If $\norm_t$ is a nonconstant geodesic ray of norms in $\cN^\diag(V)$, then it is of the form $(t.\ell(\{\norm_t\}_t)).\norm$.
\end{theorem}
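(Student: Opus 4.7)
Writing $\ell := \ell(\{\norm_t\})$ and $\norm$ for the time-zero value of the ray, the goal is to show $\norm_t = (t.\ell).\norm$ for all $t \geq 0$.

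The upper bound $\norm_s \leq (s.\ell).\norm$ follows from log-convexity. For each $v \in V$, Definition \ref{def:geod} yields $\norm[v]_s \leq \norm[v]^{1-s/T}\norm[v]_T^{s/T}$ on every segment $[0,T]$, so $s \mapsto \log \norm[v]_s$ is convex on $[0,\infty)$. Combined with the asymptotic slope $\lim_{s\to\infty} s\mi \log \norm[v]_s = \log \ell[v]$ and monotonicity of difference quotients of convex functions, this forces $\log \norm[v]_s - \log \norm[v] \leq s \log \ell[v]$, i.e., $\norm[v]_s \leq \norm[v]\cdot(s.\ell)[v]$. The envelope definition of the Gérardin action then gives $\norm_s \leq (s.\ell).\norm$.

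For the reverse inequality I would pass to a spherical completion $\mfl/\field$. Corollary \ref{coro:geodesicbasechange} shows that $\{\norm_{t,\mfl}\}$ remains a geodesic ray over $\mfl$ with asymptotic direction $\ell_\mfl$ (since base change preserves pointwise values of norms), while Proposition \ref{prop:envelopeground} applied with $\norm \in \cN^\diag(V)$ gives $((t.\ell).\norm)_\mfl = (t.\ell_\mfl).\norm_\mfl$. As restriction of a base-changed norm back to $V$ recovers $\norm$, it suffices to prove the equality over $\mfl$. There every norm is diagonalisable, and spherical completeness supplies a basis $(v_i)_i$ of $V \otimes_\field \mfl$ jointly orthogonal for $\norm_\mfl$ and $\ell_\mfl$; by Proposition \ref{prop:envdiag}, the candidate ray $t \mapsto (t.\ell_\mfl).\norm_\mfl$ is orthogonally diagonalised by $(v_i)$ throughout, with value $\norm[v_i]_\mfl \cdot \ell_\mfl[v_i]^t$ on the basis vector $v_i$.

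To conclude I would show that $(v_i)$ actually diagonalises the entire ray $\{\norm_{t,\mfl}\}$. For each $T > 0$, spherical completeness provides a joint orthogonal basis $(w_i^{(T)})_i$ for $\norm_\mfl$ and $\norm_{T,\mfl}$, and by Definition \ref{def:geod} and Lemma \ref{lem:diagenvelope} this basis orthogonalises every $\norm_{t,\mfl}$ for $t \in [0,T]$, with $\log \norm[w_i^{(T)}]_{t,\mfl}$ affine in $t$. A compactness-and-extraction argument on these bases as $T \to \infty$, combined with Proposition \ref{prop:succmin} and the pointwise convergence $\norm[v]_{t,\mfl}^{1/t} \to \ell_\mfl[v]$, should yield a limit basis orthogonalising both the whole ray and $\ell_\mfl$; after reindexing this must coincide with $(v_i)$, forcing $\norm[v_i]_{t,\mfl} = \norm[v_i]_\mfl \cdot \ell_\mfl[v_i]^t$ and hence $\norm_{t,\mfl} = (t.\ell_\mfl).\norm_\mfl$. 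The main obstacle is making this extraction rigorous --- morally it says that geodesic rays in the CAT(0) space $(\cN(V \otimes_\field \mfl), d_2)$ lie in a single apartment of the associated Bruhat--Tits-like building --- and the hypothesis that the whole ray lies in $\cN^\diag$ is precisely what ensures the limiting basis is well-defined.
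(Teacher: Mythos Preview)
Your upper bound $\norm_s \leq (s.\ell).\norm$ via log-convexity of $s \mapsto \log\norm[v]_s$ is correct and clean; the paper does not isolate this half so explicitly.

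The lower bound, however, has a genuine gap that you yourself flag: the ``compactness-and-extraction'' of a limiting basis from the family $(w_i^{(T)})_i$ as $T\to\infty$. Over a general spherically complete field (which may be enormous, e.g.\ a Hahn or Mal'cev--Neumann field) there is no compactness on the space of bases, so no subsequence argument is available. You also implicitly assume $\ell(\{\norm_{t,\mfl}\}) = (\ell(\{\norm_t\}))_\mfl$ on all of $V\otimes_\field\mfl$; the paper's lemma only gives equality on $V$, and this would need separate justification.

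The paper sidesteps extraction entirely. It works directly over $\field$: for each $t>0$ it picks a basis jointly orthogonal for $\norm$ and $\norm_t$, defines the ``slope'' norm $\norm_{0,t}\in\cN_0(V)$ by
\[
\norm[\sum a_i v_i]_{0,t}=\max_i |a_i|_0\,\norm[v_i]_t^{1/t}\norm[v_i]^{-1/t},
\]
so that $(t.\norm_{0,t}).\norm=\norm_t$, and then shows $\norm_{0,t}$ is \emph{independent of $t$}. The key point is nestedness rather than convergence: for $t<u$, a joint orthogonal basis for $\norm$ and $\norm_u$ diagonalises the whole geodesic on $[0,u]$, hence in particular $\norm_t$, and a direct computation in this basis gives $\norm_{0,t}=\norm_{0,u}$. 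Injectivity (Corollary~\ref{coro:injectivity}) then identifies this constant slope with $\ell$. No limit of bases is needed, only the observation that a basis chosen for a longer segment also serves for all shorter ones. If you want to run the argument over the spherical completion (to avoid worrying about joint diagonalisability over $\field$), the same nestedness trick works there verbatim and replaces your extraction step.
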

\begin{proof}
    This follows as in \cite[Proposition 2.18]{parreau:immeubles} which shows that $t\mapsto \norm_t$ must lie in a same apartment for all $t$, hence is of the form $\iota_{\underline v}(t\alpha,\norm)$ for some basis $\underline v$. Picking $\norm_0\in \cN_0(V)$ defined as
    $$\norm[\sum a_i v_i]_0:=\max_i |a_i|_0 e^{-\alpha_i}$$
    then shows that $\norm_t=(t.\norm_0).\norm$, while it is clear that $\ell(\{\norm_t\}_t)=\norm_0$.
\end{proof}

We conclude with the following estimate. It is a finite-dimensional, and much simpler version, of the main result of our article.

\begin{theorem}\label{thm:convergencemeasurefindim}
    Let $t\mapsto \norm_t,\norm'_t$ be geodesic rays in $\cN(V)$. Then, for all $p\in [1,\infty]$,
    $$\lim_{t\to\infty}\frac{d_p(\norm_t,\norm'_t)}{t}=d_p(\ell(\{\norm_t\}_t),\ell(\{\norm_t\}_t)).$$
\end{theorem}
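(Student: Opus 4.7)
The plan is to reduce the claim to the exact identity of Remark~\ref{rem:diag} by introducing an auxiliary norm diagonalised in a joint basis of $\norm_0:=\ell(\{\norm_t\}_t)$ and $\norm'_0:=\ell(\{\norm'_t\}_t)$, and to absorb the resulting error through the $d_\infty$-Lipschitz properties of the Gérardin action (Theorem~\ref{thm:dinftyenvelope}) and of the successive minima (Proposition~\ref{prop:succmin}).

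I would first establish that the structural result of Theorem~\ref{thm:raystructure} extends to all geodesic rays in $\cN(V)$, not only those in $\cN^\diag(V)$: every nonconstant geodesic ray has the form $\norm_t=(t.\norm_0).\norm$ with $\norm_0=\ell(\{\norm_t\}_t)$ and $\norm=\norm_{t=0}$. This is done by extending scalars to a spherical completion $(\mfl,|\cdot|_\mfl)$ of $\field$, over which every norm on $V\otimes_\field\mfl$ is diagonalisable, applying Theorem~\ref{thm:raystructure} to the extended ray (which is itself a geodesic by Corollary~\ref{coro:geodesicbasechange}), and restricting back to $V$ via the equality clause of Proposition~\ref{prop:envelopeground} together with the preservation of $\ell$ under ground field extension. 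The constant-ray case corresponds to $\norm_0=\norm_\triv$ and is immediate.

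Since $(\field,|\cdot|_0)$ is always spherically complete, the trivially-valued norms $\norm_0,\norm'_0$ admit a joint orthogonal basis $(v_i)_i$ of $V$, ordered so that $\log(\norm[v_i]'_0/\norm[v_i]_0)=\lambda_i(\norm_0,\norm'_0)$ is decreasing in $i$. Choose any $\eta\in\cN(V)$ admitting $(v_i)_i$ as an orthogonal basis: then $\norm_0,\norm'_0,\eta$ jointly diagonalise in $(v_i)_i$, so Remark~\ref{rem:diag} yields the exact identity
$$\lambda_i\bigl((t.\norm_0).\eta,\,(t.\norm'_0).\eta\bigr)=\lambda_i(t.\norm_0,\,t.\norm'_0)=t\cdot\lambda_i(\norm_0,\norm'_0)$$
for every $t\geq 0$ and every $i=1,\dots,n$. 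Theorem~\ref{thm:dinftyenvelope} then gives the $t$-independent bounds $d_\infty(\norm_t,(t.\norm_0).\eta)\leq d_\infty(\norm,\eta)$ and $d_\infty(\norm'_t,(t.\norm'_0).\eta)\leq d_\infty(\norm',\eta)$; combined with the $1$-Lipschitzness of $\lambda_i$ in each variable (Proposition~\ref{prop:succmin}) and two triangle inequalities, this produces
$$\bigl|\lambda_i(\norm_t,\norm'_t)-t\cdot\lambda_i(\norm_0,\norm'_0)\bigr|\leq d_\infty(\norm,\eta)+d_\infty(\norm',\eta),$$
a bound that is constant in $t$. Dividing by $t$ shows $\lambda_i(\norm_t,\norm'_t)/t\to\lambda_i(\norm_0,\norm'_0)$ for each $i$; since the index set is finite and $d_p^p=n\mi\sum_i|\lambda_i|^p$ for $p<\infty$, respectively $d_\infty=\max_i|\lambda_i|$, this pointwise convergence of rescaled successive minima transfers at once to the sought convergence of $d_p(\norm_t,\norm'_t)/t$ for every $p\in[1,\infty]$.

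The main obstacle is the preparatory step, namely extending the representation $\norm_t=(t.\norm_0).\norm$ beyond the diagonalisable setting via a spherical completion and the equality clause of Proposition~\ref{prop:envelopeground}. Once this is in place, the introduction of $\eta$ and the direct comparison via Remark~\ref{rem:diag} make the remainder mechanical; in particular, the argument sidesteps the $d_p$-Lipschitzness of the Gérardin action for $p>1$ that the author flags as elusive.
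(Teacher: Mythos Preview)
Your core argument is essentially the paper's: jointly diagonalise $\norm_0,\norm'_0$, introduce an auxiliary norm $\eta$ in the same apartment, apply Remark~\ref{rem:diag} to obtain $\lambda_i((t.\norm_0).\eta,(t.\norm'_0).\eta)=t\,\lambda_i(\norm_0,\norm'_0)$, and absorb the change of base point via the $d_\infty$-contraction of Theorem~\ref{thm:dinftyenvelope}. Your explicit tracking of each successive minimum is a minor refinement, but the route is the same.

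The gap is in your preparatory step for the non-diagonalisable case. You invoke the equality clause of Proposition~\ref{prop:envelopeground} to conclude $\norm_t=(t.\norm_0).\norm$ after restricting from a spherical completion $\mfl$; but that equality, $\norm_{0,\mfl}.\norm_\mfl=(\norm_0.\norm)_\mfl$, is stated \emph{only} under the hypothesis $\norm\in\cN^\diag(V)$ --- precisely the hypothesis you are trying to remove. For general $\norm$ the proposition yields only an inequality, and the remark following it explicitly warns that envelope problems for non-diagonalisable norms cannot in general be handled by passing to a spherical completion. A related subtlety: the lemma on preservation of $\ell$ only asserts agreement on $V$, not that $\ell(\{\norm_{t,\mfl}\}_t)$ coincides with the ground field extension of $\ell(\{\norm_t\}_t)$ on all of $V\otimes_\field\mfl$, which your restriction argument also tacitly uses.

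The paper avoids this entirely by not attempting to extend Theorem~\ref{thm:raystructure}: it runs the diagonalisable argument over $\mfl$ and transports the $d_p$-distances back via the isometry of ground field extension (Proposition~\ref{prop:fieldextdp}). Your proof is easily repaired the same way --- your steps for the diagonalisable case already suffice, applied after base change to $\mfl$.
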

\begin{proof}
    Assume first that the considered rays lie in $\cN^\diag(V)$. By Theorem \ref{thm:raystructure} they are then of the form $(t.\ell(\{\norm_t\}_t)).\norm_0$, $(t.\ell(\{\norm'_t\}_t).\norm'_0)$. In this case, it is clear that the limit is independent of the starting point, by Theorem \ref{thm:dinftyenvelope}. We jointly diagonalise $\ell(\{\norm_t\}_t),\ell(\{\norm_t\}_t)$ and pick a norm $\norm\in \cN(V)$ also diagonalised in this basis. Without loss of generality, we may therefore assume that $\norm_t=(t.\ell(\{\norm_t\}_t)).\norm$ and that $\norm'_t=(t.\ell(\{\norm_t\}_t)).\norm$. The result then follows from Remark \ref{rem:diag}.

    In general, we pick a spherically complete field extension $(\mfl,|\cdot|_\mfl)$. The ground field extension of the given geodesic rays then lies in $\cN^\diag(V\otimes_\field \mfl)$, in which case the first part of the proof applies. Since ground field extension is an isometry by Proposition \ref{prop:fieldextdp}, the result follows.
\end{proof}

\begin{remark}
    The above very simple proof no longer works in the graded case considered in the next section: in general, it is not possible to construct a \textit{submultiplicative} norm which is diagonalised in a given sequence of bases, unless (e.g.) the algebra is toric.
\end{remark}

\section{Asymptotic geometry of the space of graded norms.}

Throughout, we let $X$ be a compact (connected) projective variety over $\field$, and we fix $L$ an ample line bundle on $X$. We write $R:=R(X,L)=\bigoplus_m H^0(X,mL)$, and for brevity $R_m=H^0(X,mL)$.

\subsection{First definitions.}\label{sect:21}

We define a \textit{graded norm} on $R$ to be the data $\norm_\bullet$, for each $m$, of a norm $\norm_m\in \cN(R_m)$ such that, for all $s_m\in R_m,s_sn\in R_n$, 
$$\norm[s_m\cdot s_n]_{m+n}\leq \norm[s_m]_m\norm[s_n]_n.$$
We say it is \textit{bounded} if there exists $C>0$ and a graded norm $\norm_\bullet'$ with
$$d_\infty(\norm_m,\norm'_m)\leq Cm$$
for all $m$. We denote by $\cN(R)$ the set of bounded graded norms on $R$. We likewise define the trivially-valued version of this space, denoted by $\cN_0(R)$. It contains the trivial graded norm $\norm_{\triv,\bullet}$ equal to $1$ on each $R_m-\{0\}$.

The space $\cN(R)$ inherits the pointwise partial order $\leq$ of norms, and the least upper bound property (hence the pointwise maximum of norms is well-defined), as we leave it to the reader to check. There is again a translating action of $\bbr$, $e^{\bullet c}\norm_\bullet$, given in degree $m$ by $e^{mc}\norm_m$. Finally, $\cN_0(R)$ also inherits the scaling action of $\bbr_{>0}$ given in degree $m$ by $(\norm_{0,m})^t$.


We define $\cN^\diag(R)$ to be the set of bounded graded norms $\norm_\bullet$ on $L$ such that, for each $m$, $\norm_m\in\cN^\diag(R_m)$. 

We define the \textit{strong Goldman--Iwahori distance} on $\cN(R)$ by
$$d_\infty(\norm_\bullet,\norm'_\bullet):=\sup_{m>0} m\mi d_\infty(\norm_m,\norm'_m).$$
This expression is finite by boundedness, and it coincides with the smallest constant $C>0$ such that for all $m>0$ and all $s_m\in R_m$,
\begin{equation}\label{eq:distgraded} 
e^{-Cm}\norm[s_m]'_m\leq \norm[s_m]_m\leq e^{Cm}\norm[s_m]'_m.
\end{equation}That it defines a genuine distance is clear. Endowed with the strong Goldman--Iwahori distance, $\cN(R)$ is complete by \cite[Theorem 1.3.4.1]{reb:thesis}. We insist it is not the same as the $d_\infty$-distance in \cite{bj:kstab1}, and indeed is not quite natural as it gives as much weight to behaviour in degree one as it does to asymptotic behaviour. However, it has the advantage of being a distance over a pseudodistance. We collect a few facts here that will be used consistently in this section.

\begin{proposition}\label{prop:dinftystrong}
    We have the following:
    \begin{enumerate}
        \item $\cN^\diag(R)$ is $d_\infty$-dense in $\cN(R)$;
        \item let $\norm_\bullet:m\mapsto \norm_m\in \cN(R_m)$ be a sequence of norms, and assume there exist $\norm_{\bullet,k}\in \cN(R)$ such that $\sup_m d_\infty(\norm_m,\norm_{m,k})\to_{k\to\infty} 0$. Then $\norm_\bullet\in \cN(R)$;
        \item if $d_\infty(\norm_\bullet,\norm'_\bullet)\leq c$ then $d_\infty(\norm_m,\norm'_m)\leq cm$;
        \item given a set $S\subset \cN(R)$ such that there exists $C>0$ and $\norm'_\bullet\in \cN(R)$ with, for all $\norm_\bullet\in S$, $d_\infty(\norm_\bullet,\norm'_\bullet)\leq C$, then the pointwise supremum $\norm_{\bullet,S}$ defined by
        $$\norm[s]_{m,S}:=\sup_{\norm_\bullet\in S}\norm[s]_m$$
        for $s\in R_m$ belongs to $\cN(R)$.
    \end{enumerate}
\end{proposition}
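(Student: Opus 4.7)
The strategy is to dispatch (3) immediately from the definition, handle (2) and (4) by passing submultiplicativity to limits and suprema respectively, and devote most of the effort to (1).

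Assertion (3) is a direct unpacking: $d_\infty(\norm_\bullet,\norm'_\bullet)\leq c$ reads $\sup_m m\mi d_\infty(\norm_m,\norm'_m)\leq c$, hence $d_\infty(\norm_m,\norm'_m)\leq cm$ for every $m$. Assertion (4) then follows by combining (3) with the finite-dimensional least-upper-bound lemma (Lemma~\ref{lem:upperbound}): the degreewise bound $d_\infty(\norm_m,\norm'_m)\leq Cm$ produces a genuine norm $\norm[\cdot]_{m,S}\in\cN(R_m)$ in every degree, and submultiplicativity is immediate from
$$\norm[s_m t_n]_{m+n,S} = \sup_{\norm_\bullet\in S}\norm[s_m t_n]_{m+n} \leq \sup_{\norm_\bullet\in S}\norm[s_m]_m\norm[t_n]_n\leq \norm[s_m]_{m,S}\norm[t_n]_{n,S},$$
while the $d_\infty$-bound to $\norm'_\bullet$ persists in the supremum. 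Assertion (2) is similar: each pointwise limit $\norm_m$ is a norm by completeness of $(\cN(R_m),d_\infty)$, submultiplicativity passes to the limit via $\norm[s_m t_n]_{m+n}=\lim_k\norm[s_m t_n]_{m+n,k}\leq\lim_k\norm[s_m]_{m,k}\norm[t_n]_{n,k}=\norm[s_m]_m\norm[t_n]_n$, and boundedness is inherited from any fixed $\norm_{\bullet,k}$ via the triangle inequality.

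For (1), over a spherically complete base field there is nothing to do, since every finite-dimensional norm is then diagonalisable. In general, I would fix $\varepsilon>0$ and, in each degree $m$, combine density of $\cN^\diag(R_m)$ with Lemma~\ref{lem:orthodinfty} to extract a basis $(s_{m,i})_i$ of $R_m$ that is $e^{-m\varepsilon/4}$-orthogonal for $\norm_m$; declaring this basis orthogonal with values $\norm'_m(s_{m,i}):=\norm_m(s_{m,i})$ then gives a diagonalised norm $\norm'_m$ with $d_\infty(\norm_m,\norm'_m)\leq m\varepsilon/2$ by Lemma~\ref{lem:gilambdaortho}. The collection $\norm'_\bullet$ is not a priori submultiplicative across degrees.

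The main obstacle is precisely restoring submultiplicativity while retaining degreewise diagonalisability. I would repair submultiplicativity by replacing $\norm'_\bullet$ with its largest submultiplicative minorant $\norm^\sharp_\bullet$, given degreewise by an inf-envelope over decompositions $s=\sum_j\prod_k s_k^{(j)}$ of $s\in R_m$ into products of pure-degree sections of $\norm'_\bullet$. Since $e^{-m\varepsilon/2}\norm_\bullet$ is a submultiplicative graded norm bounded above by $\norm'_\bullet$, the sandwich $e^{-m\varepsilon/2}\norm_m\leq\norm^\sharp_m\leq\norm'_m$ yields $d_\infty(\norm_\bullet,\norm^\sharp_\bullet)\leq\varepsilon/2$, hence $\norm^\sharp_\bullet\in\cN(R)$ lies at strong distance at most $\varepsilon$ from $\norm_\bullet$. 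The delicate step, requiring the most care, is verifying that the envelope preserves diagonalisability in each degree; this can be arranged by choosing the bases $(s_{m,i})_i$ compatibly with a fixed finite set of generators of $R$, so that the infimum defining $\norm^\sharp_m$ is attained along monomial decompositions and $\norm^\sharp_m$ is jointly diagonalised by monomials via Lemma~\ref{lem:diagenvelope}. This density statement is essentially the content of \cite[Theorem 1.3.4.1]{reb:thesis}, to which I would ultimately defer for the technical execution.
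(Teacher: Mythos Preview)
Your treatment of (2), (3), (4) is correct and matches the paper's approach. One small misreading in (2): the $\norm_m$ are already \emph{given} as norms in $\cN(R_m)$, so there is no need to invoke completeness; only submultiplicativity and boundedness require checking, and your limit argument for these is fine.

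For (1), however, your approach has a genuine gap and is far more complicated than necessary. The delicate step you flag --- that the largest submultiplicative minorant $\norm^\sharp_\bullet$ remains degreewise diagonalisable --- is not justified. Your appeal to Lemma~\ref{lem:diagenvelope} is misplaced: that lemma concerns \emph{suprema} of norms sharing a common orthogonal basis, whereas $\norm^\sharp_m$ is an \emph{infimum} over product decompositions, and there is no reason such an infimum should be attained along monomials or be diagonal in any prescribed basis (products of basis elements across degrees need not be basis elements, due to relations in $R$). Your final citation is also off: \cite[Theorem 1.3.4.1]{reb:thesis} is invoked in the paper for \emph{completeness} of $(\cN(R),d_\infty)$, not for density of diagonalisable norms.

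The paper's argument for (1) avoids all of this with a one-line trick you missed. Rather than allowing the degreewise error to grow like $m\varepsilon$, pick $\norm'_{m,\varepsilon}\in\cN^\diag(R_m)$ with $d_\infty(\norm_m,\norm'_{m,\varepsilon})<\varepsilon/3$ \emph{uniformly in $m$}. Then a short chain of inequalities shows $\norm[st]'_{m+n,\varepsilon}\leq e^\varepsilon\norm[s]'_{m,\varepsilon}\norm[t]'_{n,\varepsilon}$, so the \emph{constant} rescaling $e^\varepsilon\norm'_{\bullet,\varepsilon}$ is submultiplicative, manifestly still diagonalisable in each degree, and lies within strong $d_\infty$-distance $2\varepsilon$ of $\norm_\bullet$. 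The key point is that a constant (degree-independent) multiplicative correction preserves diagonalisability trivially, so no envelope machinery is needed.
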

\begin{proof}
    Let $\norm_\bullet\in \cN(R)$. Pick $\varepsilon>0$, and for each $m$, consider $\norm'_{m,\varepsilon}\in \cN^\diag(R_m)$ such that $d_\infty(\norm_m,\norm'_{m,\varepsilon})<\varepsilon/3$. Pick $s_{m,i}\in R_m$ and $s_{n,j}\in R_n$. Using \eqref{eq:distgraded} we write
    \begin{align*}
        \norm[s_{m,i}\cdot s_{n,j}]'_{m+n,\varepsilon}&\leq e^{\varepsilon/3}\norm[s_{m,j}\cdot s_{n,j}]_{m+n}\\
        &\leq e^{\varepsilon/3}\norm[s_{m,i}]_m \norm[s_{n,j}]_n\\
        &\leq e^{\varepsilon}\norm[s_{m,i}]'_{m,\varepsilon}\norm[s_{n,j}]'_{n,\varepsilon}
    \end{align*}
    hence
    $$e^\varepsilon \norm[s_{m,j}\cdot s_{n,j}]'_{m+n,\varepsilon}\leq (e^{\varepsilon}\norm[s_{m,i}]'_{m,\varepsilon})(e^\varepsilon\norm[s_{n,j}]'_{n,\varepsilon}),$$
    i.e.\ the sequence of norms $m\mapsto \norm_{m,\varepsilon}:= e^{\varepsilon}\norm'_{m,\varepsilon}$ is submultiplicative. It is clear that it is bounded, and that
    $$d_\infty(\norm_\bullet,\norm_{\bullet,\varepsilon})\leq d_\infty(\norm_\bullet,\norm'_{\bullet,\varepsilon})+\varepsilon<2\varepsilon$$
    proving the density statement (1). The proof of (2) is similar: pick two sections in degrees $m$ and $n$, and given $\varepsilon>0$ fix $k$ such that $\sup_m m\mi d_\infty(\norm_m,\norm_{m,k})<\varepsilon$. Then similarly we find
    \begin{align*}
        \norm[s_{m,i}\cdot s_{n,j}]_{m+n}&\leq e^{(m+n)\varepsilon}\norm[s_{m,i}\cdot s_{n,j}]_{m+n,k}\\
        &\leq e^{(m+n)\varepsilon}\norm[s_{m,i}]_{m,k}\norm[s_{n,j}]_{n,k}\\
        &\leq e^{3(m+n)\varepsilon}\norm[s_{m,i}]_m\norm[s_{n,j}]_n.
    \end{align*}
    Thus, taking $\varepsilon\to 0$ in the inequality
    $$\norm[s_{m,i}\cdot s_{n,j}]_{m+n}\leq e^{3(m+n)\varepsilon}\norm[s_{m,i}]_m\norm[s_{n,j}]_n$$
    we find that $\norm_\bullet$ is submultiplicative. It is bounded by definition, concluding the proof of (2). (3) is definitional. (4) follows from (3) and the least upper bound property of $\cN(R)$.
\end{proof}

\subsection{Metric structures and asymptotic equivalence.}\label{sect:22}

Let $\norm_\bullet,\norm'_\bullet\in \cN(R)$. Then, the main theorem of \cite{cmac} (see also \cite[Section 4.2]{reb:1}, \cite[Theorem 9.5]{boueri}) states that the sequence of measures $m\mi_*\sigma(\norm_m,\norm'_m)$ converges weakly to a compactly supported measure on $\bbr$ that we denote by
$$\sigma(\norm_\bullet,\norm'_\bullet).$$
As in the finite-dimensional case, if $\norm_\zb\in \cN_0(R)$ and $\norm'_\zb=\norm_{\triv,\bullet}$ we will simply write $\sigma(\norm_\zb)$. It shares the same properties as its finite-dimensional counterpart Proposition \ref{prop:measure}, which we will freely use throughout this section:
\begin{proposition}\label{prop:measuregraded}
    Let $\norm_\bullet,\norm'_\bullet\in \cN(R)$, and $f:\bbr\to\bbr$.
    \begin{enumerate}
        \item if $f(\lambda)=-\lambda$, $f_*\sigma(\norm_\bullet,\norm_\bullet')=\sigma(\norm_\bullet',\norm_\bullet)$;
        \item if $f(\lambda)=\lambda+c$, then for any $a+b=c$, $f_*\sigma(\norm_\bullet,\norm_\bullet')=\sigma(e^{-\bullet b}\norm_\bullet,e^{\bullet a}\norm_\bullet')=\sigma(e^{-\bullet c}\norm_\bullet,\norm'_\bullet)=\sigma(\norm_\bullet,e^{\bullet c}\norm'_\bullet)$;
        \item if $f(\lambda)=\max(\lambda,0)$, then $f_*\sigma(\norm_\bullet,\norm_\bullet')=\sigma(\norm_\bullet,\norm_\bullet\vee\norm_\bullet')$;
        \item if $f(\lambda)=\max(\lambda,c)$, then $f_*\sigma(\norm_\bullet,\norm'_\bullet)=\sigma(\norm_\bullet,(e^{\bullet c} \norm_\bullet)\vee\norm_\bullet').$
    \end{enumerate}
\end{proposition}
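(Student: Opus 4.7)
The plan is to pass each identity to the asymptotic limit from its finite-dimensional counterpart (Proposition \ref{prop:measure}), using the equidistribution of \cite{cmac} which defines $\sigma(\norm_\bullet,\norm'_\bullet)$ as the weak limit of the rescaled measures $m\mi_*\sigma(\norm_m,\norm'_m)$. The key observation is that all pushforward functions $f$ in the statement are continuous on $\bbr$, and the measures $m\mi_*\sigma(\norm_m,\norm'_m)$ have uniformly bounded supports (bounded by $d_\infty(\norm_\bullet,\norm'_\bullet)$ via Proposition \ref{prop:dinftystrong}(3) and Proposition \ref{prop:succmin}), so continuous pushforwards commute with the weak limit.

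Item (1) is immediate: by Proposition \ref{prop:measure}(1), $(-\lambda)_*\sigma(\norm_m,\norm'_m)=\sigma(\norm'_m,\norm_m)$ at each level, and the scaling $\lambda\mapsto\lambda/m$ commutes with $\lambda\mapsto-\lambda$, so passing to the limit yields the result. For item (2), note that at level $m$ one applies Proposition \ref{prop:measure}(2) with translation by $mc$: if $a+b=c$, then $(\lambda+mc)_*\sigma(\norm_m,\norm'_m)=\sigma(e^{-mb}\norm_m,e^{ma}\norm'_m)$. Since $m\mi\circ(\lambda+mc)=(\lambda\mapsto m\mi\lambda+c)$, pushforward by $\lambda\mapsto m\mi\lambda$ turns this into $(\lambda+c)_*m\mi_*\sigma(\norm_m,\norm'_m)=m\mi_*\sigma(e^{-mb}\norm_m,e^{ma}\norm'_m)$, and both sides converge weakly, the right-hand side to $\sigma(e^{-\bullet b}\norm_\bullet,e^{\bullet a}\norm'_\bullet)$ (which is in $\cN(R)$ by an easy check) by the definition of the relative limit measure.

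The two envelope identities (3) and (4) are essentially the same argument, but require one preliminary observation: the pointwise supremum $\norm_\bullet\vee\norm'_\bullet$ and the shifted supremum $(e^{\bullet c}\norm_\bullet)\vee\norm'_\bullet$ lie in $\cN(R)$. Submultiplicativity is a straightforward verification:
\[
\max(\norm[s_m]_m,\norm[s_m]'_m)\cdot\max(\norm[s_n]_n,\norm[s_n]'_n)\geq\max(\norm[s_m s_n]_{m+n},\norm[s_m s_n]'_{m+n}),
\]
and boundedness is clear from $d_\infty$-boundedness of $\norm_\bullet,\norm'_\bullet$ and \eqref{eq:distgraded}. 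Granting this, at each level $m$ we apply Proposition \ref{prop:measure}(3),(4) (with constant $mc$ in case (4)) and combine with $m\mi\circ\max(\lambda,mc)=\max(m\mi\lambda,c)$ to obtain
\[
(\max(\lambda,c))_*\,m\mi_*\sigma(\norm_m,\norm'_m)=m\mi_*\sigma(\norm_m,(e^{mc}\norm_m)\vee\norm'_m),
\]
and the corresponding identity with $c=0$ for (3). The left-hand side converges weakly to $(\max(\lambda,c))_*\sigma(\norm_\bullet,\norm'_\bullet)$ by continuity and uniform boundedness of supports, while the right-hand side converges to $\sigma(\norm_\bullet,(e^{\bullet c}\norm_\bullet)\vee\norm'_\bullet)$ by \cite{cmac} applied to the pair $(\norm_\bullet,(e^{\bullet c}\norm_\bullet)\vee\norm'_\bullet)$.

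The only mild subtlety is ensuring that the envelopes indeed produce elements of $\cN(R)$ (so that \cite{cmac} applies) and that weak convergence survives the pushforward; both reduce to the uniform support boundedness noted above. No new ideas beyond quantisation of Proposition \ref{prop:measure} are needed.
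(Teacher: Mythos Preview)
Your proposal is correct and matches the paper's intended approach: the paper states this proposition without proof, simply remarking that it ``shares the same properties as its finite-dimensional counterpart Proposition \ref{prop:measure},'' i.e.\ it is to be obtained by passing the identities of Proposition \ref{prop:measure} to the weak limit via the Chen--Maclean equidistribution. Your write-up supplies exactly those details (uniform support bounds, commutation of $m\mi_*$ with the relevant $f$'s, and the check that the maxima lie in $\cN(R)$), so there is nothing to add.
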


We now move on to metric structures. For all $p\in [1,\infty)$, we define
$$d_p(\norm_\bullet,\norm'_\bullet)^p:=\int_\bbr |x|^pd\sigma(\norm_\bullet,\norm'_\bullet)=\lim_{m\to\infty}m\mi d_p(\norm_m,\norm'_m).$$
Clearly, for $1\leq p\leq q\leq \infty$, we have
\begin{equation}\label{eq:dpcomp}
    d_1\leq d_p \leq d_q \leq d_\infty.
\end{equation}
For $p<\infty$, $d_p$ only defines a pseudodistance, but they all define the same equivalence classes, and we write
$$\norm_\bullet\sim\norm'_\bullet\Leftrightarrow d_p(\norm_\bullet,\norm'_\bullet)=0$$
for some, hence all $p\in [1,\infty)$. Finally, we define the volume of bounded graded norms by
$$\vol(\norm_\bullet,\norm'_\bullet)=\int_\bbr xd\sigma(\norm_\bullet,\norm'_\bullet)=\lim_{m\to\infty}m\mi \vol(\norm_m,\norm'_m).$$
Again, the properties of the finite-dimensional volume from Proposition \ref{prop:vol} are easily seen to hold on $\cN(R)$:
\begin{proposition}
    Let $\norm_\bullet,\norm'_\bullet,\norm''_\bullet\in \cN(V)$, $\norm_\zb,\norm'_\zb\in \cN_0(V)$.
    \begin{enumerate}
        \item $\vol(\norm_\bullet,\norm'_\bullet)=\vol(\norm_\bullet,\norm''_\bullet)+\vol(\norm''_\bullet,\norm'_\bullet)$;
        \item $\vol(\norm_\bullet,\norm'_\bullet)=-\vol(\norm'_\bullet,\norm_\bullet)$;
        \item $\vol$ is $1$-Lipschitz with respect to $d_\infty$ in each variable;
        \item $\vol(e^{\bullet c}\norm,e^{\bullet d}\norm')=(d-c)+\vol(\norm,\norm');$
        \item $\vol$ is decreasing in the first variable and increasing in the second variable with respect to the partial order $\leq$;
        \item $\vol(t.\norm_\zb,s.\norm'_\zb)=s\cdot \vol(\norm_\zb,\norm'_\zb)+(s-t)\cdot \vol(\norm_\zb)=t\cdot \vol(\norm_\zb,\norm'_\zb)+(s-t)\vol(\norm'_\zb)$.
    \end{enumerate}
\end{proposition}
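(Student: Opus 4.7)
The plan is to derive all six properties directly from the finite-dimensional analogue (Proposition \ref{prop:vol}) by applying it degree by degree and passing to the limit. The graded volume is defined as $\vol(\norm_\bullet, \norm'_\bullet) := \lim_{m\to\infty} m^{-1}\vol(\norm_m, \norm'_m)$, whose existence rests on the Chen--Maclean equidistribution result cited above; moreover each operation involved in the statement --- translation $e^{\bullet c}\norm_\bullet$, scaling $t.\norm_\zb$, and the pointwise partial order --- is built degree-wise, so the finite-dimensional identities apply verbatim in each graded piece $R_m$.

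For items (1), (2), (4), (5), and (6), Proposition \ref{prop:vol} yields an identity or monotonicity statement at each degree $m$ which is homogeneous of degree one in $m$ under the relevant rescaling. Explicitly, part (4) of the finite-dimensional proposition gives $\vol(e^{mc}\norm_m, e^{md}\norm'_m) = m(d-c) + \vol(\norm_m, \norm'_m)$; dividing by $m$ and letting $m \to \infty$ yields the graded identity (4). For (6), the same argument applies once one observes that $(t.\norm_\zb)_m = (\norm_{0,m})^t$, so that
$$m^{-1}\vol(t.\norm_{0,m}, s.\norm'_{0,m}) = s\, m^{-1}\vol(\norm_{0,m}, \norm'_{0,m}) + (s-t)\, m^{-1}\vol(\norm_{0,m})$$
passes directly to the claimed formula. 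Items (1), (2), and (5) are similarly immediate after dividing by $m$.

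The one slightly more delicate point is (3). In finite dimension, Proposition \ref{prop:vol}(3) gives $|\vol(\norm_m, \norm'_m) - \vol(\norm_m, \norm''_m)| \leq d_\infty(\norm'_m, \norm''_m)$. Dividing by $m$ and applying Proposition \ref{prop:dinftystrong}(3), namely $m^{-1}d_\infty(\norm'_m, \norm''_m) \leq d_\infty(\norm'_\bullet, \norm''_\bullet)$, one may pass to the limit to obtain the graded Lipschitz bound; the first variable is symmetric. I do not anticipate any genuine obstacle: the proof amounts to the observation that the factor of $m$ in the definition of bounded graded norms is precisely what is needed for the finite-dimensional identities and estimates to rescale correctly under $m^{-1}\,\cdot\,$, so that all six properties pass cleanly to the limit.
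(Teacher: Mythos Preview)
Your proposal is correct and matches the paper's approach exactly: the paper states this proposition without proof, noting only that ``the properties of the finite-dimensional volume from Proposition \ref{prop:vol} are easily seen to hold on $\cN(R)$,'' which is precisely the degree-by-degree passage to the limit you have written out. Your handling of item (3) via Proposition \ref{prop:dinftystrong}(3) is the right way to make the Lipschitz bound rigorous in the graded setting.
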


\subsection{The action of $\cN_0(R)$.}\label{sect:23}

\begin{definition}
    Let $\norm_{0,\bullet}\in \cN_0(R)$, and $\norm_\bullet\in \cN(R)$. We define $\norm_{0,\bullet}.\norm_\bullet$ to be the sequence of norms defined in each degree $m$ by $\norm_{0,m}.\norm_m$.
\end{definition}

\begin{proposition}
    The sequence of norms $\norm_{0,\bullet}.\norm_\bullet$ lies in $\cN(R)$.
\end{proposition}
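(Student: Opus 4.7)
The plan is to verify separately that $\norm_{0,\bullet}.\norm_\bullet$ satisfies the two conditions for membership in $\cN(R)$: submultiplicativity of the sequence and boundedness with respect to the strong Goldman--Iwahori metric. The degree-wise norm property is already known from the finite-dimensional setting.

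For submultiplicativity, the key is to use the alternative characterisation from Lemma \ref{lem:infenvelope} rather than the supremum definition. Given $s_m\in R_m$ and $s_n\in R_n$, and arbitrary decompositions $s_m=\sum_k v_k$ and $s_n=\sum_\ell w_\ell$, the product admits the decomposition $s_m\cdot s_n=\sum_{k,\ell} v_k\cdot w_\ell$. Applying the infimum formula to $s_m s_n$ and then combining submultiplicativity of $\norm_\bullet$ and $\norm_{0,\bullet}$ yields
\begin{align*}
    \norm_{0,m+n}.\norm_{m+n}[s_m s_n] &\leq \max_{k,\ell}\norm[v_k\cdot w_\ell]_{0,m+n}\norm[v_k\cdot w_\ell]_{m+n}\\
    &\leq \max_{k,\ell}\norm[v_k]_{0,m}\norm[w_\ell]_{0,n}\norm[v_k]_m\norm[w_\ell]_n\\
    &=\Big(\max_k\norm[v_k]_{0,m}\norm[v_k]_m\Big)\cdot\Big(\max_\ell\norm[w_\ell]_{0,n}\norm[w_\ell]_n\Big).
\end{align*}
Taking the infimum over decompositions of $s_m$ and of $s_n$ independently and invoking Lemma \ref{lem:infenvelope} once more gives $\norm_{0,m+n}.\norm_{m+n}[s_m s_n]\leq \norm_{0,m}.\norm_m[s_m]\cdot\norm_{0,n}.\norm_n[s_n]$, as required.

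For boundedness, the elegant route is via the finite-dimensional Lipschitz estimate. First, directly from the envelope definition, $\norm_{\triv,m}.\norm_m=\norm_m$, since the condition $\norm_* \leq \norm_{\triv,m}\cdot\norm_m=\norm_m$ has $\norm_m$ itself as its greatest solution. Applying Theorem \ref{thm:dinftyenvelope} to $(\norm_{0,m},\norm_m)$ and $(\norm_{\triv,m},\norm_m)$ then gives, in each degree $m$,
$$d_\infty(\norm_{0,m}.\norm_m,\norm_m)=d_\infty(\norm_{0,m}.\norm_m,\norm_{\triv,m}.\norm_m)\leq d_\infty(\norm_{0,m},\norm_{\triv,m}).$$
Since $\norm_{0,\bullet}\in\cN_0(R)$, by definition there exists $C>0$ with $d_\infty(\norm_{0,m},\norm_{\triv,m})\leq Cm$ for all $m$; hence $m^{-1}d_\infty(\norm_{0,m}.\norm_m,\norm_m)\leq C$ uniformly in $m$, proving that $\norm_{0,\bullet}.\norm_\bullet$ is at finite strong Goldman--Iwahori distance from $\norm_\bullet$. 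Together with the submultiplicativity step this confirms $\norm_{0,\bullet}.\norm_\bullet\in\cN(R)$.

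Neither step is technically demanding, but the choice of formalism matters: the supremum definition of the Gérardin envelope does not obviously interact well with products of sections, whereas Lemma \ref{lem:infenvelope} makes submultiplicativity essentially tautological. The boundedness estimate is almost a free consequence of Theorem \ref{thm:dinftyenvelope}, which is the price paid by the hard technical work of Section \ref{sect:14}; the main point here is simply to notice that the trivial trivially-valued norm acts as the identity on $\cN(R_m)$, so that the abstract $d_\infty$-Lipschitzness of the Gérardin action transfers directly into a bound in the strong Goldman--Iwahori distance.
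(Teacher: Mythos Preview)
Your proof is correct but takes a genuinely different route from the paper's. For submultiplicativity, the paper first treats the case $\norm_\bullet\in\cN^\diag(R)$ by picking jointly orthogonal bases in each degree and using the explicit formula of Proposition~\ref{prop:envdiag}, then passes to the general case by $d_\infty$-approximation via Proposition~\ref{prop:dinftystrong}(1)(2) together with Theorem~\ref{thm:dinftyenvelope}. You instead invoke Lemma~\ref{lem:infenvelope} directly, which makes the submultiplicativity computation a one-liner that works uniformly for all $\norm_\bullet$ without any diagonalisability assumption or approximation argument. This is cleaner: the infimum formula is precisely designed to interact well with products, and you exploit that. The paper's approach, by contrast, reuses the diagonal/approximation template that drives most of Section~\ref{sect:14}, so it is more in keeping with the paper's overall style but slightly longer here. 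For boundedness, the paper leaves this implicit (it falls out of the approximation argument since Proposition~\ref{prop:dinftystrong}(2) returns an element of $\cN(R)$), whereas you make it explicit via the identity $\norm_{\triv,m}.\norm_m=\norm_m$ and Theorem~\ref{thm:dinftyenvelope}; this is a nice observation and a legitimate alternative.
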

\begin{proof}
    Assume first that $\norm_\bullet\in \cN^\diag(R)$. Let $s_{m,i}$ belong to a basis jointly diagonalising $\norm_m,\norm_{0,n}$, and define likewise $s_{n,j}$. Then it is clear from the envelope definition of the $\cN_0$-action that
    \begin{align*}
    \norm_{0,m+n}.\norm[s_{m,i}\cdot s_{n,j}]_{m+n}&\leq \norm[s_{m,i}\cdot s_{n,j}]_{0,m+n}\norm[s_{m,i}\cdot s_{n,j}]_{m+n}\\
    &\leq \norm[s_{m,i}]_{0,m}\norm[s_{m,i}]_m\norm[s_{n,j}]_{0,n}\norm[s_{n,j}]_n\\
    &=(\norm_{0,m}.\norm[s_{m,i}]_m )(\norm_{0,n}.\norm[s_{n,j}]_n)
    \end{align*}
    since $s_{m,i}$, $s_{n,j}$ belong to jointly diagonalising bases. Now, given $s_m=\sum_i a_i s_{m,i}$ and $s_n=\sum_j b_j s_{n,j}$ we have
    \begin{align*}
        \norm_{0,m+n}.\norm[s_m\cdot s_n]_{m+n}&\leq \max_{i,j}|a_i||b_j|\norm_{0,m+n}.\norm[s_{m,i}\cdot s_{n,j}]_{m+n}\\
        &\leq (\max_i |a_i|\norm_{0,m}.\norm[s_{m,i}]_m)(\max_j |b_j|\norm_{0,n}.\norm[s_{n,j}]_n)\\
        &=\norm_{0,m}.\norm[s_m]_m \cdot \norm_{0,n}.\norm[s_n]_n. 
    \end{align*}
    In the general case, using Proposition \ref{prop:dinftystrong}(1) we may $d_\infty$-approximate $\norm_\bullet$ by a sequence $\norm_{\bullet,k}\in \cN^\diag(R)$. By Theorem \ref{thm:dinftyenvelope} given $k$ with $d_\infty(\norm_\bullet,\norm_{\bullet,k})$ we then have for each $m$ that
    $$d_\infty(\norm_{0,m}.\norm_m,\norm_{0,m}.\norm_{m,k})\leq m\varepsilon$$
    and $\norm_{0,\bullet}.\norm_{\bullet,k}\in \cN(R)$ by the first part of the proof, so that the result follows from Proposition \ref{prop:dinftystrong}(2).
\end{proof}

As an immediate corollary of Theorems \ref{thm:dinftyenvelope}, \ref{thm:d1envelope}, and \ref{thm:voldet} we then find
\begin{corollary}\label{coro:actioncontractivity}
    Let $\norm_{0,\bullet},\norm'_{0,\bullet}\in \cN_0(R)$, $\norm_\bullet,\norm'_\bullet\in \cN(R)$. Then
    $$d_\infty(\norm_{0,\bullet}.\norm_\bullet,\norm_\zb'.\norm'_\bullet)\leq d_\infty(\norm_\zb,\norm'_\zb)+d_\infty(\norm_\bullet,\norm'_\bullet),$$
    $$d_1(\norm_{0,\bullet}.\norm_\bullet,\norm_\zb'.\norm'_\bullet)\leq d_1(\norm_\zb,\norm'_\zb)+d_1(\norm_\bullet,\norm'_\bullet),$$
    $$\vol(\norm_{0,\bullet}.\norm_\bullet,\norm_\zb'.\norm'_\bullet)=\vol(\norm_\zb,\norm'_\zb)+\vol(\norm_\bullet,\norm'_\bullet).$$
\end{corollary}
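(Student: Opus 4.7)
The plan is to deduce the corollary termwise from its finite-dimensional analogues by applying them in each graded piece $R_m$, then passing to the appropriate limit or supremum in $m$. All three inequalities have exactly the same structure, so I would treat them in parallel.

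First, fix $m \geq 1$. Since $\norm_{0,\bullet}.\norm_\bullet$ and $\norm'_{0,\bullet}.\norm'_\bullet$ are defined degreewise, and their $m$-th components are the finite-dimensional envelopes $\norm_{0,m}.\norm_m$ and $\norm'_{0,m}.\norm'_m$ on $R_m$, Theorem \ref{thm:dinftyenvelope} gives
$$d_\infty(\norm_{0,m}.\norm_m,\norm'_{0,m}.\norm'_m) \leq d_\infty(\norm_{0,m},\norm'_{0,m}) + d_\infty(\norm_m,\norm'_m),$$
Theorem \ref{thm:d1envelope} gives the analogous inequality for $d_1$, and Corollary \ref{coro:volenvelope} gives the corresponding equality for $\vol$.

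For the $d_\infty$ statement, divide the finite-dimensional inequality by $m$ and observe that $m^{-1} d_\infty(\norm_{0,m},\norm'_{0,m}) \leq d_\infty(\norm_\zb,\norm'_\zb)$ and $m^{-1} d_\infty(\norm_m,\norm'_m) \leq d_\infty(\norm_\bullet,\norm'_\bullet)$ by definition of the strong Goldman--Iwahori distance on $\cN(R)$ and $\cN_0(R)$. Taking the supremum over $m$ on the left yields the desired bound.

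For $d_1$ and $\vol$, I would divide the finite-dimensional inequality (resp.\ equality) by $m$, and use that by the graded definitions together with the Chen--Maclean convergence result recalled in Section \ref{sect:22},
$$\lim_{m\to\infty} m^{-1} d_1(\norm_{0,m},\norm'_{0,m}) = d_1(\norm_\zb,\norm'_\zb), \qquad \lim_{m\to\infty} m^{-1} d_1(\norm_m,\norm'_m) = d_1(\norm_\bullet,\norm'_\bullet),$$
and likewise for $\vol$. Passing to the limit $m\to\infty$ then preserves the inequality (resp.\ equality), giving both remaining statements. There is no real obstacle here: the entire content is in the finite-dimensional results of Sections \ref{sect:14}--\ref{sect:15}, and the present corollary is a formal consequence of their compatibility with the rescaling that defines the graded distances.
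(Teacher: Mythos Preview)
Your proposal is correct and is exactly the argument the paper has in mind: the corollary is stated there as an immediate consequence of Theorems \ref{thm:dinftyenvelope}, \ref{thm:d1envelope}, and the volume identity (Corollary \ref{coro:volenvelope}), applied degreewise and then passed to the supremum (for $d_\infty$) or the limit (for $d_1$ and $\vol$) using the Chen--Maclean convergence. The only point you leave implicit is that Chen--Maclean also applies on the left-hand side because $\norm_{0,\bullet}.\norm_\bullet$ and $\norm'_{0,\bullet}.\norm'_\bullet$ lie in $\cN(R)$ by the preceding proposition, but this is clear from context.
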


As a consequence, we easily see:
\begin{proposition}\label{prop:equivalenceaction}
    If $\norm_\zb\sim\norm_\zb'\in \cN_0(R)$ and $\norm_\bullet\sim\norm'_\bullet\in \cN(R)$, then
    $$\norm_\zb.\norm_\bullet\sim \norm'_\zb.\norm'_\bullet.$$
\end{proposition}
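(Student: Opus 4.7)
The plan is to reduce the statement directly to the $d_1$-Lipschitz estimate supplied by Corollary \ref{coro:actioncontractivity}. Recall from Section \ref{sect:22} that the equivalence relation $\sim$ on $\cN(R)$ (resp.\ $\cN_0(R)$) is characterised by $d_1 = 0$, since all the $d_p$-pseudodistances for $p \in [1,\infty)$ induce the same equivalence classes. So the entire proposition amounts to checking a single vanishing of a $d_1$-pseudodistance.

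First, I would note that by hypothesis $d_1(\norm_\zb, \norm'_\zb) = 0$ and $d_1(\norm_\bullet, \norm'_\bullet) = 0$. Then I would invoke the $d_1$-contractivity of the action, which is the second inequality of Corollary \ref{coro:actioncontractivity}, to write
\[
d_1(\norm_\zb.\norm_\bullet,\, \norm_\zb'.\norm'_\bullet) \;\leq\; d_1(\norm_\zb, \norm'_\zb) + d_1(\norm_\bullet, \norm'_\bullet) \;=\; 0.
\]
Hence $d_1(\norm_\zb.\norm_\bullet,\, \norm_\zb'.\norm'_\bullet) = 0$, which by definition of $\sim$ gives $\norm_\zb.\norm_\bullet \sim \norm_\zb'.\norm'_\bullet$, as required.

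There is no serious obstacle: the whole content of the proposition is packaged inside the $d_1$-Lipschitz bound already proven in the finite-dimensional setting (Theorem \ref{thm:d1envelope}) and transferred degreewise to the graded setting in Corollary \ref{coro:actioncontractivity}. The only thing one might wish to emphasise is that the equivalence relation could a priori have been defined using $d_p$ for $p > 1$, in which case the same argument still works because the graded contractivity of the action at level $d_p$ can be inferred degreewise from Theorem \ref{thm:d1envelope} together with $d_1 \leq d_p$ on spectral measures; but as stated in the text, $\sim$ is independent of $p$, so a single application of the $d_1$ bound suffices.
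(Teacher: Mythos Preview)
Your proof is correct and matches the paper's approach exactly: the paper simply writes ``As a consequence, we easily see'' immediately after Corollary~\ref{coro:actioncontractivity}, leaving the reader to apply the $d_1$-contractivity bound precisely as you have done. One minor quibble: your final parenthetical remark about inferring $d_p$-contractivity from Theorem~\ref{thm:d1envelope} via $d_1 \leq d_p$ is garbled (that inequality goes the wrong way for such a deduction), but since you correctly observe that $\sim$ is $p$-independent and only the $d_1$ bound is needed, this does not affect the argument.
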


\subsection{Geodesics and geodesic rays.}\label{sect:24}

Given $\norm_\bullet,\norm'_\bullet\in \cN(R)$, it was seen in \cite{reb:2} that the sequence of geodesic segments $\norm_{m,t}$ joining $\norm_m$ and $\norm'_m$ is submultiplicative and bounded for each $t$. We define a \textit{geodesic} in $\cN(R)$ to be a segment of this form. 

Given that finite-dimensional geodesics are $d_p$-geodesics from Section \ref{sect:16}, it follows that such segments are indeed $d_p$-geodesics for $p\in(1,\infty)$. The CAT(0) inequality from Proposition \ref{prop:cat0} is easily seen to pass to the limit, implying that $(\cN(R)/\sim,d_2)$ is a uniquely geodesic metric space. Furthermore, $\vol$ also remains affine along geodesics in $\cN(R)$. 

We note that from the results of Section \ref{sect:16}, a $d_\infty$-limit of geodesics is a geodesic. Finally, the maximum principle Lemma \ref{lem:maxprinciple} also extends for the pointwise partial order of $\cN(R)$ to a maximum principle for geodesics in $\cN(R)$.

As in Section \ref{sect:17}, we define a \textit{geodesic ray} to be a map $[0,\infty)\ni t\mapsto \norm_{\bullet,t}\in \cN(R)$ whose restriction to each segment is a geodesic. In particular, each $t\mapsto \norm_{m,t}$ is a geodesic ray in $\cN(R_m)$, and the sequence $\ell(\{\norm_{\bullet,t}\}_t)$ lies $\cN_0(R)$, as is seen upon taking the limit in the inequality
    \begin{align*}
        \norm[s_m\cdot s_n]_{m+n,t}^\frac1t\leq \norm[s_m]_{m,t}^\frac1t \norm[s_n]_{n,t}^\frac1t.
    \end{align*}
It is also clear that $\ell$ is increasing with respect to the partial order $\leq$, and conversely that $\norm_\zb\leq \norm'_\zb$ implies the corresponding geodesic rays to be comparable as well.

By the results of Section \ref{sect:17}, geodesic rays in $\cN^\diag(R)$ are exactly those of the form $(t.\norm_\zb).\norm_\bullet$ for some $\norm_\zb\in \cN^\diag_0(R)$ and $\norm_\bullet\in \cN^\diag(R)$. Up to a spherically complete field extension, this therefore describes all possible geodesic rays in $\cN(R)$.

At this stage, we have tacitly proven Theorem A:
\begin{theorem}
    There is an action $\cN_0(R)\times \cN(R)\to \cN(R)$ which:
\begin{enumerate}
    \item is jointly $1$-Lipschitz in both variables with respect to the $d_1$ metric;
    \item transforms the scaling action on $\cN_0(R)$ into geodesic rays.
\end{enumerate}
\end{theorem}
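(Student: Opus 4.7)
The plan is to observe that Theorem A is an assembly statement: each of its three items is an essentially immediate consequence of results already established in Sections \ref{sect:21}--\ref{sect:24}, and the only work is to indicate how the pieces combine. I would organise the proof in three short parts, after first noting that the action $(\norm_{0,\bullet},\norm_\bullet)\mapsto \norm_{0,\bullet}.\norm_\bullet$ is defined degree by degree in Section \ref{sect:23}, and that the proposition preceding Corollary \ref{coro:actioncontractivity} already shows that the resulting sequence is submultiplicative and bounded, hence a well-defined element of $\cN(R)$.

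For part (1), the joint $d_1$-Lipschitz property is precisely the $d_1$ line in Corollary \ref{coro:actioncontractivity}. That corollary itself is obtained by applying the finite-dimensional Theorem \ref{thm:d1envelope} in each degree $m$ to get
\[
d_1(\norm_{0,m}.\norm_m,\norm'_{0,m}.\norm'_m)\leq d_1(\norm_{0,m},\norm'_{0,m})+d_1(\norm_m,\norm'_m),
\]
dividing by $m$, and passing to the limit using the Chen--Maclean convergence of relative spectral measures from Section \ref{sect:22} that defines $d_1$ on $\cN(R)$.

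For part (2), suppose $\norm_\zb,\norm'_\zb\in\cN_0(R)$ satisfy $\norm_\zb.\norm_\bullet=\norm'_\zb.\norm_\bullet$ for some fixed $\norm_\bullet\in\cN(R)$. Since the action is defined degree-wise, this means $\norm_{0,m}.\norm_m=\norm'_{0,m}.\norm_m$ for every $m$. Corollary \ref{coro:injectivity}, applied in each $\cN(R_m)$, gives $\norm_{0,m}=\norm'_{0,m}$, and hence $\norm_\zb=\norm'_\zb$ as graded norms. For part (3), fix $\norm_\zb\in\cN_0(R)$ and $\norm_\bullet\in\cN(R)$ and consider $t\mapsto (t.\norm_\zb).\norm_\bullet$. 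By the remark opening Section \ref{sect:17}, the finite-dimensional curve $t\mapsto (t.\norm_{0,m}).\norm_m$ restricts to a geodesic segment on any $[a,b]\subset[0,\infty)$ and is therefore a geodesic ray in $\cN(R_m)$. Assembling these degree-wise curves and invoking once more the submultiplicativity/boundedness from Section \ref{sect:23}, the restriction to any $[a,b]$ is a geodesic in $\cN(R)$ in the sense of Section \ref{sect:24}, so $t\mapsto (t.\norm_\zb).\norm_\bullet$ is a geodesic ray in $\cN(R)$.

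The only genuinely nontrivial ingredient here is the finite-dimensional Lipschitz estimate Theorem \ref{thm:d1envelope}, whose proof relies on the determinant--volume identity and the monotonicity arguments of Section \ref{sect:12} (together with the $d_\infty$-continuity work in Section \ref{sect:14} needed to handle non-diagonalisable norms); once that is in place, the passage to the graded setting is routine via per-degree estimates and the Chen--Maclean asymptotics, and no new difficulty arises for Theorem A itself.
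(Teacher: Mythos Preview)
Your proposal is correct and matches the paper's own proof essentially line for line: the paper likewise derives (1) from Corollary \ref{coro:actioncontractivity}, (2) from Corollary \ref{coro:injectivity} applied degreewise, and declares (3) clear from the finite-dimensional construction of Section \ref{sect:17}. Your write-up is slightly more expansive in spelling out the passage to the limit via Chen--Maclean, but the argument and cited ingredients are the same.
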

\begin{proof}
    (2) is clear and (1) was given in Corollary \ref{coro:actioncontractivity}. 
\end{proof}

Given two geodesic rays $t\mapsto \norm_\bt,\norm'_\bt$ in $\cN(R)$, the Busemann convexity inequality derived from the finite-dimensional case Theorem \ref{thm:busemann} allows us to define
$$\hat d_p(\{\norm_\bt\}_t,\{\norm'_\bt\}_t):=\lim_{t\to\infty}\frac{d_p(\norm_\bt,\norm'_\bt)}{t}.$$
This is also a pseudodistance, but the classes induced by the $\hat d_p$ equivalence relation $\hat\tilde$ are rather large: from Theorem B, we will see that they contain all rays directed by $d_p$-equivalent norms in $\cN_0(R)$, but this distance is also independent of the starting point, so that rays starting at different point but with the same limit will be $\hat\sim$ equivalent. 

\begin{lemma}
    Let $\norm_\zb\in \cN_0(R)$ and $\norm_\bullet,\norm'_\bullet\in \cN(R)$. Then
    $$\lim_{t\to\infty}t\mi d_p((t.\norm_\zb).\norm_\bullet,(t.\norm_\zb).\norm'_\bullet)=0.$$
\end{lemma}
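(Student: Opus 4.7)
The plan is to derive this as an immediate consequence of the $d_\infty$-contractivity of the $\cN_0(R)$-action established in Corollary \ref{coro:actioncontractivity}, combined with the comparison $d_p \leq d_\infty$ from \eqref{eq:dpcomp}. The key observation is that the action is $1$-Lipschitz in the second variable (for fixed first variable), so applying it to the same element $t.\norm_\zb$ on both sides of a pair $(\norm_\bullet, \norm'_\bullet)$ cannot increase their distance — no matter how large $t$ becomes.

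More precisely, I would first invoke Corollary \ref{coro:actioncontractivity} with first variables equal to $t.\norm_\zb$, obtaining
\[
d_\infty\bigl((t.\norm_\zb).\norm_\bullet,\, (t.\norm_\zb).\norm'_\bullet\bigr) \;\leq\; d_\infty(t.\norm_\zb,\, t.\norm_\zb) + d_\infty(\norm_\bullet,\norm'_\bullet) \;=\; d_\infty(\norm_\bullet,\norm'_\bullet),
\]
which is a quantity depending only on the endpoints and is in particular finite and independent of $t$ (since $\norm_\bullet,\norm'_\bullet\in\cN(R)$).

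Next I would apply \eqref{eq:dpcomp} to pass from $d_\infty$ to $d_p$ on the left-hand side, yielding
\[
d_p\bigl((t.\norm_\zb).\norm_\bullet,\, (t.\norm_\zb).\norm'_\bullet\bigr) \;\leq\; d_\infty(\norm_\bullet,\norm'_\bullet).
\]
Dividing by $t$ and letting $t\to\infty$ gives the desired limit $0$. There is no real obstacle here: the $d_p$-contractivity of the action in the second variable for $p>1$ is unknown (as discussed in the ``Thoughts'' paragraph), but we do not need it — the cruder $d_\infty$-bound is enough because we are rescaling by $1/t$. The result should be recorded precisely because it shows that the $\hat d_p$-class of a ray $(t.\norm_\zb).\norm_\bullet$ depends only on $\norm_\zb$ and not on the chosen basepoint $\norm_\bullet$, which is the content needed to make the trivially-valued limit well-defined on equivalence classes.
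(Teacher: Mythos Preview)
Your proof is correct and follows essentially the same approach as the paper: apply \eqref{eq:dpcomp} to bound $d_p$ by $d_\infty$, then use Corollary \ref{coro:actioncontractivity} with both first variables equal to $t.\norm_\zb$ to obtain a bound $d_\infty(\norm_\bullet,\norm'_\bullet)$ independent of $t$, and divide by $t$.
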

\begin{proof}
    By \eqref{eq:dpcomp} and Corollary \ref{coro:actioncontractivity} we have
    \begin{align*} d_p((t.\norm_\zb).\norm_\bullet,(t.\norm_\zb).\norm'_\bullet)&\leq d_\infty((t.\norm_\zb).\norm_\bullet,(t.\norm_\zb).\norm'_\bullet)\\
    &\leq d_\infty(t.\norm_\zb,t.\norm_\zb)+d_\infty(\norm_\bullet,\norm'_\bullet)\\
    &=d_\infty(\norm_\bullet,\norm'_\bullet)=o(t).
    \end{align*}
\end{proof}

\subsection{Radial isometry and spectral measures.}\label{sect:25}

We now fix a starting point $\norm_\bullet\in \cN(R)$, which by the previous results we may assume to lie in $\cN^\diag(R)$. The main result of this section is Theorem B:

\begin{theorem}\label{thm:convergencemeasure}
    Let $\norm_\zb,\norm'_\zb\in \cN_0(R)$. Then the sequence of measures
    $$\sigma_t:=t\mi_*\sigma((t.\norm_\zb).\norm_\bullet,(t.\norm'_\zb).\norm_\bullet))$$
    converges weakly to
    $$\sigma(\norm_\zb,\norm'_\zb).$$
\end{theorem}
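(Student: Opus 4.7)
The plan is to reduce weak convergence of the $\sigma_t$'s to an asymptotic identity for relative volumes, then bound this asymptotic volume from above and below. Since all measures involved have uniformly bounded support --- by the finite-dimensional fact that $\mathrm{supp}\,\sigma(\norm_m,\norm'_m)\subseteq[-d_\infty(\norm_m,\norm'_m),d_\infty(\norm_m,\norm'_m)]$ combined with $d_\infty((t.\norm_\zb).\norm_\bullet,(t.\norm'_\zb).\norm_\bullet)\leq t\,d_\infty(\norm_\zb,\norm'_\zb)$ from Corollary \ref{coro:actioncontractivity} --- weak convergence is equivalent to pointwise convergence of $c\mapsto\int\max(x,c)\,d\sigma_t$ for every $c\in\bbr$. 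Using Proposition \ref{prop:measuregraded}(4) and the scaling identity $e^{\bullet tc}(t.\norm_\zb)=t.(e^{\bullet c}\norm_\zb)$, this in turn reduces to the asymptotic volume identity
\[
\lim_{t\to\infty}t^{-1}\vol\bigl((t.\norm_\zb).\norm_\bullet,\,(t.(e^{\bullet c}\norm_\zb)).\norm_\bullet\vee(t.\norm'_\zb).\norm_\bullet\bigr)=\vol\bigl(\norm_\zb,\,(e^{\bullet c}\norm_\zb)\vee\norm'_\zb\bigr)
\]
for every $c\in\bbr$.

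The $\limsup$ inequality is immediate. The max-of-envelopes $\leq$ envelope-of-max estimate already exploited in the proof of Theorem \ref{thm:d1envelope} gives
\[
(t.(e^{\bullet c}\norm_\zb)).\norm_\bullet\vee(t.\norm'_\zb).\norm_\bullet\leq\bigl(t.((e^{\bullet c}\norm_\zb)\vee\norm'_\zb)\bigr).\norm_\bullet,
\]
and combined with monotonicity of $\vol$ in its second slot and the cocycle formula from Corollary \ref{coro:actioncontractivity}, this produces the sharp upper bound with no limit required.

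The matching $\liminf$ bound is the main obstacle. In the finite-dimensional case (Theorem \ref{thm:convergencemeasurefindim}) the upper bound is already an equality via Remark \ref{rem:diag}, after choosing the starting norm jointly diagonalised with $\norm_\zb$ and $\norm'_\zb$ over a spherically complete extension --- a choice permitted by $d_\infty$-Lipschitzness of the action in the base. In the graded setting, $\norm_\bullet$ is constrained by submultiplicativity and cannot be freely modified degree-wise, so this trick is unavailable. My approach would be to pass to a spherically complete ground field extension (using graded analogues of Propositions \ref{prop:groundfieldisometry} and \ref{prop:fieldextdp}) and then construct, degree-by-degree, a candidate submultiplicative norm dominated by the max-of-envelopes whose degree-$m$ volume contribution realises the sharp Remark \ref{rem:diag} value up to a defect quantified by the failure of a joint orthogonal basis for $\norm_{0,m},\norm'_{0,m}$ to simultaneously diagonalise $\norm_m$, controlled through the almost-orthogonality machinery of Section \ref{sect:14}.

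The hard step will be to show that this degree-wise defect is $o(m)$ uniformly in $t$, so that it vanishes in the $m\to\infty$ limit. I expect this to require the quantitative equidistribution input from \cite{cmac} controlling the asymptotic distribution of successive minima of the relevant graded norms, combined with a volume-monotonicity argument in the spirit of \cite{bdl:kenergy,reb:3} from the Kähler setting. Once the $\liminf$ is secured, combining with the upper bound yields pointwise convergence of $\int\max(x,c)\,d\sigma_t$ for every $c\in\bbr$, hence the asserted weak convergence of measures.
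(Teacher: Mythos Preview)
Your reduction to the asymptotic volume identity via $\int\max(x,c)\,d\sigma_t$ is exactly the paper's opening move, and your $\limsup$ argument (max-of-envelopes $\leq$ envelope-of-max, then Corollary \ref{coro:actioncontractivity}) is correct and matches the paper. The gap is entirely in your $\liminf$.

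Your proposed route --- construct a degree-wise candidate norm dominated by the max-of-envelopes, control the failure of joint diagonalisation by almost-orthogonality, and show the defect is $o(m)$ uniformly in $t$ --- is not viable as stated. The almost-orthogonality constants $\lambda_k$ from Section \ref{sect:14} are governed by $d_\infty$-distances between fixed norms in each degree, and there is no mechanism in that machinery for the defect to vanish as $m\to\infty$: the failure of $\norm_m$ to be diagonalised in a joint basis for $\norm_{0,m},\norm'_{0,m}$ need not improve with $m$, and submultiplicativity gives you no leverage here. This is precisely the obstacle flagged in the Remark closing Section \ref{sect:17}: one cannot in general manufacture a submultiplicative $\norm_\bullet$ diagonalised in a prescribed sequence of bases. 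The equidistribution result of \cite{cmac} you invoke controls limits of spectral measures, not the degree-wise diagonalisation defect you need.

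The paper's actual argument for the $\liminf$ (Proposition \ref{prop:volumeasymptotics}) avoids any diagonalisation entirely and works purely with the geodesic-ray structure. For each $t$ one takes the geodesic segment on $[0,t]$ from $\norm_\bullet$ to the pointwise max $(t.\norm_\zb).\norm_\bullet\vee(t.\norm'_\zb).\norm_\bullet$; the maximum principle shows these segments increase in $t$, and a $d_\infty$-bound forces convergence to a geodesic ray $s\mapsto\norm_{s,\bullet}$ whose volume slope is exactly the limit you want. Then one shows this ray is the \emph{minimal} geodesic ray dominating both original rays, and identifies it as $(t.(\norm_\zb\vee\norm'_\zb)).\norm_\bullet$ by applying the $\ell$-limit operation and Theorem \ref{thm:raystructure}. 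Affineness of $\vol$ along geodesics finishes. This is the ``volume argument inspired by \cite{bdl:kenergy,reb:3}'' alluded to in the introduction --- but it is a geodesic-ray envelope construction, not a degree-wise approximation, and that is the idea you are missing.
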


Thus (using Proposition \ref{prop:equivalenceaction} for the first part) we obtain the following non-Archimedean analogue of the main result of \cite{finski:dp}: 
\begin{corollary}\label{coro:isometry}
    $(\cR^p(R)/\hat\sim,\hat d_p)\simeq (\cN_0(R)/\sim,d_p)$ is an isometric isomorphism; in particular, given two geodesic rays $\{\norm_{\bullet,t}\}_t,\{\norm'_{\bullet,t}\}_t\in \cR^p(R)$,
    $$\lim_{t\to\infty} t\mi d_p(\norm_{\bullet,t},\norm'_{\bullet,t})=d_p(\ell(\{\norm_{\bullet,t}\}_t),\ell(\{\norm'_{\bullet,t}\}_t)).$$
\end{corollary}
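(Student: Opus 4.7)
The plan is to realise the isomorphism concretely via the trivially-valued limit map $\ell$, extracting the isometry from Theorem \ref{thm:convergencemeasure} and the bijectivity on quotients from the structural description of geodesic rays recalled in Section \ref{sect:24}. Fix once and for all a reference point $\norm_\bullet\in \cN(R)$, which after reduction modulo $\sim$ may be taken diagonalisable (or, if necessary, may be passed through a spherically complete extension $\mfl/\field$: the operations $\ell$, the $\cN_0$-action, $d_p$ and the notion of geodesic ray are all preserved under such extension via Corollary \ref{coro:geodesicbasechange}, Proposition \ref{prop:fieldextdp} and their degreewise application). Define
$$\Phi(\{\norm_\bt\}_t):=\ell(\{\norm_\bt\}_t),\qquad \Psi(\norm_\zb):=\{(t.\norm_\zb).\norm_\bullet\}_t.$$

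The core isometry is then extracted as follows. For two canonical rays $\Psi(\norm_\zb),\Psi(\norm'_\zb)$ sharing basepoint $\norm_\bullet$, Theorem \ref{thm:convergencemeasure} gives weak convergence of the rescaled spectral measures $t\mi_*\sigma((t.\norm_\zb).\norm_\bullet,(t.\norm'_\zb).\norm_\bullet)$ to $\sigma(\norm_\zb,\norm'_\zb)$; since Corollary \ref{coro:actioncontractivity} provides a uniform bound on their supports (independent of $t$), weak convergence upgrades to convergence of $p$-th moments, yielding
$$\hat d_p(\Psi(\norm_\zb),\Psi(\norm'_\zb))=d_p(\norm_\zb,\norm'_\zb).$$
The basepoint-independence lemma at the end of Section \ref{sect:24} shows that $\{(t.\norm_\zb).\norm'_\bullet\}_t \hat\sim \Psi(\norm_\zb)$ for any other basepoint $\norm'_\bullet$, so $\Psi$ descends to a well-defined map of quotients and the identity above characterises $\hat d_p$ intrinsically.

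Bijectivity is a matter of comparing both compositions. On the one hand, $\Phi\circ\Psi=\mathrm{id}_{\cN_0(R)/\sim}$ holds because $\ell(\{(t.\norm_\zb).\norm_\bullet\}_t)=\norm_\zb$ degreewise, the graded analogue of \eqref{eq:uniq}, obtained from the definition of $\ell$ as a pointwise limit in each fixed degree. Conversely, any geodesic ray $\{\norm_\bt\}_t$ is $\hat\sim$-equivalent to $\Psi(\Phi(\{\norm_\bt\}_t))$: by the last paragraph of Section \ref{sect:24}, possibly after passage to a spherically complete extension, the ray has the form $\{(t.\norm_\zb).\norm'_\bullet\}_t$ with $\norm_\zb=\Phi(\{\norm_\bt\}_t)$ and some basepoint $\norm'_\bullet$, and the basepoint-independence lemma equates this to $\Psi(\norm_\zb)$ in $\cR^p(R)/\hat\sim$. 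Combined with the previous paragraph, $\Phi$ and $\Psi$ are mutually inverse on quotients and the isomorphism is isometric. The "in particular" formula for arbitrary rays then follows: replace each ray by its canonical form through $\Psi\circ\Phi$, invoke the core identity, and use the triangle inequality together with the vanishing of basepoint changes in $\hat d_p$.

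The main obstacle is the surjectivity of $\Phi$: the finite-dimensional structure result Theorem \ref{thm:raystructure} relies on jointly diagonalising $(\norm,\norm_t)$ at each time $t$, which is unavailable in the submultiplicative graded setting since diagonalisability at the graded level is not in general preserved by any of the relevant operations. This is circumvented by diagonalising each $\norm_m$ after passing to a spherically complete extension and exporting the resulting description of the ray back to $\field$, using that $\ell$, the $\cN_0$-action, and $d_p$-equivalence are all preserved by restriction of scalars (degreewise, from Propositions \ref{prop:groundfieldisometry} and \ref{prop:envelopeground}). All remaining ingredients—continuity of the $\cN_0$-action, Busemann convexity along geodesic rays, and uniform compactness of supports of the rescaled spectral measures—have already been established in the preceding sections.
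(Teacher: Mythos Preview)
Your argument follows the same route the paper takes—the corollary is presented there as an immediate consequence of Theorem \ref{thm:convergencemeasure} together with Proposition \ref{prop:equivalenceaction} and the structural description of rays in Section \ref{sect:24}—and you have simply unpacked this by constructing the mutually inverse maps $\Phi=\ell$ and $\Psi$ explicitly.

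Two points need tightening. First, what you call ``the surjectivity of $\Phi$'' is not the obstacle: surjectivity of $\Phi$ is immediate from $\Phi\circ\Psi=\mathrm{id}$. The genuine issue is $\Psi\circ\Phi\hat\sim\mathrm{id}$, i.e.\ that an \emph{arbitrary} ray is $\hat\sim$-equivalent to the canonical ray directed by its $\ell$-limit—which is indeed what your argument then addresses.

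Second, and more substantively, your appeal to Proposition \ref{prop:envelopeground} does not do what you need: that proposition gives only the \emph{inequality} $(\norm_0^\mfl.\norm^\mfl)|_\field\leq (\norm_0^\mfl)|_\field.\norm^\mfl|_\field$, so the $\cN_0$-action is not known to be preserved under restriction of scalars, and one cannot simply ``export the resulting description of the ray back to $\field$''. What one actually needs, after writing the extended rays in canonical form over $\mfl$ and applying Theorem \ref{thm:convergencemeasure} there, is the equality $d_p(\ell_\mfl,\ell'_\mfl)=d_p(\norm_\zb,\norm'_\zb)$. This follows degreewise from Theorem \ref{thm:convergencemeasurefindim} applied once over $\mfl$ and once over $\field$, together with Proposition \ref{prop:fieldextdp}:
\[
d_p(\ell_{m,\mfl},\ell'_{m,\mfl})=\lim_{t\to\infty}t\mi d_p(\norm_{m,t,\mfl},\norm'_{m,t,\mfl})=\lim_{t\to\infty}t\mi d_p(\norm_{m,t},\norm'_{m,t})=d_p(\norm_{0,m},\norm'_{0,m}),
\]
after which one divides by $m$ and passes to the limit. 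With this correction the argument goes through.
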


The heart of the proof of Theorem \ref{thm:convergencemeasure} lies in the following proposition, which is analogous to similar results in the Archimedean case \cite{bdl:kenergy}, \cite[Theorem 4.4.1]{reb:3}.

\begin{proposition}\label{prop:volumeasymptotics}
    Given $\norm_\zb,\norm'_\zb\in \cN_0(R)$, we have
    $$\lim_{t\to\infty}t\mi\vol((t.\norm_\zb).\norm_\bullet,(t.\norm_\zb).\norm_\bullet\vee (t.\norm'_\zb).\norm_\bullet)=\vol(\norm_\zb,\norm_\zb\vee\norm'_\zb).$$
\end{proposition}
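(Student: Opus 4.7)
Write $A_t := (t.\norm_\zb).\norm_\bullet$, $B_t := (t.\norm'_\zb).\norm_\bullet$, and $C_t := (t.(\norm_\zb \vee \norm'_\zb)).\norm_\bullet$. Since the scaling action commutes pointwise with $\vee$ on $\cN_0(R)$, i.e.\ $(t.\norm_\zb) \vee (t.\norm'_\zb) = t.(\norm_\zb \vee \norm'_\zb)$, the envelope inequality established in the proof of Theorem \ref{thm:d1envelope} specializes to $A_t \vee B_t \leq C_t$. The cocycle formula for $\vol$ together with Corollary \ref{coro:volenvelope} give
\[
\vol(A_t, A_t \vee B_t) = \vol(A_t, C_t) - \vol(A_t \vee B_t, C_t) = t\,\vol(\norm_\zb, \norm_\zb \vee \norm'_\zb) - \vol(A_t \vee B_t, C_t).
\]
Since $\vol(A_t \vee B_t, C_t) \geq 0$ by monotonicity of $\vol$ in the second argument, dividing by $t$ yields the upper bound $\limsup_{t\to\infty} t\mi \vol(A_t, A_t \vee B_t) \leq \vol(\norm_\zb, \norm_\zb \vee \norm'_\zb)$.

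For the reverse inequality, I reformulate the left-hand side as a spectral integral. By the graded version of Proposition \ref{prop:d1vol} (which follows from Proposition \ref{prop:measuregraded}(3) and the definition of $\vol$), we have $\vol(A_t, A_t \vee B_t) = \int \max(x,0)\, d\sigma(A_t, B_t)$. For each fixed $m$, the rays $t \mapsto A_{m,t}, B_{m,t}$ in $\cN(R_m)$ have trivially-valued limits $\norm_{0,m}, \norm'_{0,m}$ by \eqref{eq:uniq}, and Theorem \ref{thm:convergencemeasurefindim} yields weak convergence $t^{-1}_*\sigma(A_{m,t}, B_{m,t}) \to \sigma(\norm_{0,m}, \norm'_{0,m})$. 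Combined with the uniform compact support of these rescaled measures (controlled via Corollary \ref{coro:actioncontractivity}), integration against $\max(x,0)$ gives the finite-degree limit
\[
\lim_{t\to\infty} t\mi \vol(A_{m,t}, A_{m,t} \vee B_{m,t}) = \vol(\norm_{0,m}, \norm_{0,m} \vee \norm'_{0,m}).
\]

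To transfer this to the graded setting, I invoke a Fekete-type monotonicity coming from the technical part of \cite{cmac}: for any submultiplicative pair of graded norms, the sequence $m \mapsto m\mi \vol(\norm_m, \norm_m \vee \norm'_m)$ is dominated by its graded limit $\vol(\norm_\bullet, \norm_\bullet \vee \norm'_\bullet)$. Applied to $A, B$, this gives $m\mi \vol(A_{m,t}, A_{m,t} \vee B_{m,t}) \leq \vol(A_t, A_t \vee B_t)$ uniformly in $m, t$. Dividing by $t$, taking $\liminf_{t\to\infty}$ with $m$ fixed and using the finite-degree limit just established, then letting $m \to \infty$, one obtains
\[
\liminf_{t\to\infty} t\mi \vol(A_t, A_t \vee B_t) \geq \lim_{m\to\infty} m\mi \vol(\norm_{0,m}, \norm_{0,m} \vee \norm'_{0,m}) = \vol(\norm_\zb, \norm_\zb \vee \norm'_\zb),
\]
matching the upper bound and completing the proof.

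The main obstacle is the Fekete-style inequality $m\mi \vol(\norm_m, \norm_m \vee \norm'_m) \leq \vol(\norm_\bullet, \norm_\bullet \vee \norm'_\bullet)$, which goes beyond the basic weak convergence of spectral measures. It encodes a superadditive behavior of positive-part sums of successive minima under the multiplication $R_m \otimes R_n \to R_{m+n}$, reflecting how joint almost-diagonalizations across graded pieces are asymptotically respected by submultiplicativity; this is the precise role of the technical input from \cite{cmac} in the scheme above.
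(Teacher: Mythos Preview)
Your upper bound is correct and clean: the inequality $A_t\vee B_t\leq C_t$ together with the cocycle formula and Corollary \ref{coro:actioncontractivity} gives exactly $t\mi\vol(A_t,A_t\vee B_t)\leq \vol(\norm_\zb,\norm_\zb\vee\norm'_\zb)$.

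The lower bound has a genuine gap: the Fekete-type inequality you invoke is false in general. Take $R=\field[x]$ (the section ring of a point), $\norm_\bullet$ trivial, and $\norm'_m(x^m)=a_m$ with $(a_m)$ submultiplicative. Then $m\mi\vol(\norm_m,\norm_m\vee\norm'_m)=m\mi\max(\log a_m,0)$, while the graded quantity is $\max(\alpha,0)$ with $\alpha=\lim m\mi\log a_m$. Since $\log a_m$ is subadditive, $m\mi\log a_m$ decreases to its infimum $\alpha$; when all $a_m>1$ your finite-level terms sit \emph{above} the limit, not below. More conceptually, the Chen--Maclean superadditivity applies to \emph{concave} test functions; $\max(x,0)$ is convex, and the resulting subadditivity yields the opposite inequality to the one you need. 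So the swap of $\lim_t$ and $\lim_m$ cannot be justified this way, and your argument does not close.

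This exchange of limits is precisely what the paper's proof circumvents. Rather than comparing a fixed degree $m$ to the graded limit, it builds for each $t$ the geodesic segment from $\norm_\bullet$ to $A_t\vee B_t$; by the maximum principle these increase in $t$ and converge in $d_\infty$ to a genuine geodesic ray whose volume slope is the quantity under study (using only affineness of $\vol$ along geodesics, no Fekete input). The ray is then shown to be the \emph{minimal} geodesic ray dominating both $A_t$ and $B_t$, and finally identified with $C_t=(t.(\norm_\zb\vee\norm'_\zb)).\norm_\bullet$ via a two-sided squeeze using the trivially-valued limit operator $\ell$. That identification, not a monotonicity borrowed from \cite{cmac}, is what produces the matching lower bound.
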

\begin{proof}
    We proceed in three steps. We first construct a geodesic ray whose asymptotic volume captures that of the pointwise maximum of our geodesic rays. We show this geodesic ray to be minimal, which finally shows it must equal the geodesic ray on the right-hand side.

    \paragraph{1. Construction of a geodesic ray capturing volume asymptotics.}
    Fix $t\in [0,\infty)$ and consider the geodesic segment $[0,t]\ni s\mapsto \norm_{t,s,\bullet}$ joining $\norm_\bullet$ and $(t.\norm_{0,\bullet}).\norm_\bullet\vee (t.\norm'_{0,\bullet}).\norm_\bullet$. Now, since $s\mapsto (s.\norm_{0,\bullet}).\norm_\bullet$ is a geodesic starting at $\norm_\bullet$ and ending at $$(t.\norm_{0,\bullet}).\norm_\bullet\leq \norm_{t,t,_\bullet}=(t.\norm_{0,\bullet}).\norm_\bullet\vee(t.\norm_{0,\bullet}').\norm_\bullet$$ we thus have by the maximum principle Lemma \ref{lem:maxprinciple} that for all $s\in [0,t]$, $$\norm_{t,s,\bullet}\geq (s.\norm_{0,\bullet}).\norm.$$ Proceeding likewise for $\norm'_{0,\bullet}$ and taking pointwise maximum we thus have that
    \begin{align}
        \norm_{t,s,\bullet}\geq (s.\norm_{0,\bullet}).\norm_\bullet\vee(s.\norm_{0,\bullet}').\norm_\bullet.
    \end{align}
    Now fix $s$, and let $t'>t$. Applying this to $s=t<t'$, we then have that
    $$\norm_{t',t,\bullet}\geq (t.\norm_{0,\bullet}).\norm_\bullet\vee(t.\norm_{0,\bullet}').\norm_\bullet.$$
    Hence the restriction to $[0,t]$ of the geodesic $s\mapsto \norm_{t',s,\bullet}$ also starts at $\norm_\bullet$ and ends at an endpoint larger than $\norm_{t,t,\bullet}$, thus from the maximum principle again we deduce that for all $s\in [0,t]$,
    $$\norm_{t',s,\bullet}\geq \norm_{t,s,\bullet}.$$
    In other words, for fixed $s$, the sequence $t<t'\mapsto \norm_{t',s,\bullet}$ is nondecreasing. We now claim it converges in $d_\infty$ to some $\norm_{s,\bullet}$. By Proposition \ref{prop:dinftystrong}(4) it suffices to show it is $d_\infty$-bounded. Fix $t'<t$ and $s\in [0,t]$.
    \begin{align*} 
    &d_\infty(\norm_{t',s,\bullet},\norm_\bullet)\\
    &\leq \frac{s}{t'}d_\infty((t'.\norm_{0,\bullet}).\norm\vee(t'.\norm_{0,\bullet}').\norm_\bullet,\norm_\bullet)\\
    &\leq \frac{s}{t'}(d_\infty((t'.\norm_{0,\bullet}).\norm_\bullet\vee(t'.\norm_{0,\bullet}').\norm_\bullet,(t'.\norm_{0,\bullet}').\norm_\bullet)+d_\infty((t'.\norm_{0,\bullet}').\norm_\bullet,\norm_\bullet))\\
    &\leq \frac{s}{t'}(d_\infty((t'.\norm_{0,\bullet}).\norm_\bullet,(t'.\norm_{0,\bullet}').\norm_\bullet)+d_\infty((t'.\norm_{0,\bullet}').\norm_\bullet,\norm_\bullet))\\
    &\leq \frac{s}{t'}(t' d_\infty(\norm_{0,\bullet},\norm_{0,\bullet}')+t'd_\infty(\norm_{0,\bullet}',\norm_{\triv,\bullet}))\\
    &\leq s(d_\infty(\norm_{0,\bullet},\norm_{0,\bullet}')+d_\infty(\norm_{0,\bullet}',\norm_{\triv,\bullet})),
    \end{align*}
    which is finite since $\norm_{0,\bullet}'$ is bounded. We have used Busemann convexity of $d_\infty$ in the first inequality, the easily checked fact that $$d_\infty(\norm_\bullet\vee\norm'_\bullet,\norm'_\bullet)\leq d_\infty(\norm_\bullet,\norm'_\bullet)$$
    in the third inequality, and Corollary \ref{coro:actioncontractivity} together with Busemann convexity again in the fourth inequality.
    
    Being a $d_\infty$-limit of geodesics, $s\mapsto \norm_{s,\bullet}$ is itself a geodesic. This process yields a geodesic ray $[0,\infty)\ni s\mapsto \norm_{s,\bullet}$. Furthermore, $\vol$ being affine along geodesic segments, we have that for a fixed $t$,
    \begin{align*} 
        \vol(\norm_{t,s,\bullet},\norm_\bullet)&=\frac{t-s}{t}\vol(\norm_{t,0,\bullet},\norm_\bullet)+\frac{s}{t}\vol(\norm_{t,t,\bullet},\norm_\bullet)\\
        &=\frac{t-s}{t}\vol(\norm_\bullet,\norm_\bullet)+\frac{s}{t}\vol((t.\norm_{0,\bullet}).\norm_\bullet\vee (t.\norm'_{0,\bullet}).\norm_\bullet,\norm_\bullet)\\
        &=\frac{s}{t}\vol((t.\norm_{0,\bullet}).\norm_\bullet\vee (t.\norm'_{0,\bullet}).\norm_\bullet,\norm_\bullet).
    \end{align*}
    Hence, using that $\vol$ is $d_\infty$-Lipschitz,
    \begin{align*}
        \vol(\norm_{s,\bullet},\norm_\bullet)&=\lim_{t\to\infty}\vol(\norm_{t,s,\bullet},\norm_\bullet)\\
        &=s\lim_{t\to\infty}\frac{\vol((t.\norm_{0,\bullet}).\norm_\bullet\vee(t.\norm_{0,\bullet}').\norm_\bullet,\norm_\bullet)}{t}.
    \end{align*}
    Thus, the slope of $s\mapsto \vol(\norm_{s,\bullet})$ computes the limit we are interested in:
    \begin{align}\label{eq:vol}
        \lim_{t\to\infty}\frac{\vol((t.\norm_{0,\bullet}).\norm_\bullet\vee(t.\norm_{0,\bullet}').\norm_\bullet,\norm_\bullet)}{t}=\frac{d}{dt}\vol(\norm_{t,\bullet},\norm_\bullet).
    \end{align}

    \paragraph{2. Minimality of the constructed ray.}

    We now claim that $t\mapsto \norm_{t,\bullet}$ is the smallest geodesic ray bounded below by $t\mapsto (t.\norm_{0,\bullet}).\norm_\bullet$ and $t\mapsto (t.\norm'_{0,\bullet}).\norm_\bullet$. First note that it is indeed bounded below by those, by construction, as
    $$\norm_{t,t,\bullet}=(t.\norm_{0,\bullet}).\norm_\bullet\vee(t.\norm'_{0,\bullet}).\norm_\bullet.$$
    Thus, let $t\mapsto \norm'_{t,\bullet}$ be another geodesic ray satisfying this property. Then also
    $$\norm'_{t,\bullet}\geq ((t.\norm_{0,\bullet}).\norm_\bullet)\vee((t.\norm'_{0,\bullet}).\norm_\bullet)=\norm_{t,t,\bullet}$$
    for all $t$. In particular, since the restriction of $t\mapsto \norm'_{t,\bullet}$ to $[0,t]$ is a geodesic, we find by the maximum principle that for all $s\in[0,t]$,
    $$\norm'_{s,\bullet}\geq \norm_{t,s,\bullet}.$$
    Given $t'>t$, we find likewise $\norm'_{s,\bullet}\geq \norm_{t',s,\bullet}$ for all $s\in [0,t']\supset [0,t]$, thus in particular
    $$\norm'_{s,\bullet}\geq \lim_{t\to\infty}\norm_{t,s,\bullet}=\norm_{s,\bullet},$$
    proving our claim.
    
    \paragraph{3. The constructed ray is the ray directed by the maximum; conclusion.}
    
    We now show that
    \begin{align}\label{eq:finaleq}
        \norm_{t,\bullet} = (t.(\norm_{0,\bullet}\vee\norm'_{0,\bullet})).\norm_\bullet.
    \end{align}
    Note first that, since $\norm_{0,\bullet}\leq\norm_{0,\bullet}\vee\norm'_{0,\bullet}$ and $\norm'_{0,\bullet}\leq\norm_{0,\bullet}\vee\norm'_{0,\bullet}$ we have $(t.(\norm_{0,\bullet}\vee\norm'_{0,\bullet})).\norm_\bullet\geq (t.\norm_{0,\bullet}).\norm_\bullet$ and $\geq (t.\norm'_{0,\bullet}).\norm_\bullet$ for all $t$, hence
    $$(t.(\norm_{0,\bullet}\vee\norm'_{0,\bullet})).\norm_\bullet\geq (t.\norm_{0,\bullet}).\norm_\bullet\vee(t.\norm'_{0,\bullet}).\norm_\bullet.$$
    Thus $(t.(\norm_{0,\bullet}\vee\norm'_{0,\bullet})).\norm_\bullet$ is a candidate for the minimality envelope above, hence by the third part of the proof,
    \begin{equation}
        (t.(\norm_{0,\bullet}\vee\norm'_{0,\bullet})).\norm_\bullet\geq\norm_{t,\bullet}.
    \end{equation}
    On the other hand, since $\norm_{t,\bullet}\geq(t.\norm_{0,\bullet}).\norm_\bullet$ we have that $\ell(\{\norm_{t,\bullet}\})\geq \ell((t.\norm_{0,\bullet}).\norm_\bullet)=\norm_{0,\bullet}$, and likewise $\geq\norm'_{0,\bullet}$. Thus
    $$\ell(\{\norm_{t,\bullet}\})\geq \norm_{0,\bullet}\vee\norm'_{0,\bullet}.$$
    In particular,
    \begin{equation}
        \norm_{t,\bullet}=(t.\ell(\{\norm_t\}).\norm_\bullet\geq (t.(\norm_{0,\bullet}\vee\norm'_{0,\bullet})).\norm_\bullet.
    \end{equation}
    The two estimates prove \eqref{eq:finaleq}.
\end{proof}

We may now prove our main result.

\begin{proof}[Proof of Theorem \ref{thm:convergencemeasure}.]
    By a similar argument to \cite[Proposition 5.1]{cmac}, to prove Theorem \ref{thm:convergencemeasure}, it suffices to show that for all $c\in\bbr$,
    \begin{equation}\label{eq:cmac}
    \lim_{t\to\infty}\int_\bbr \max(x/t,c)\,d\sigma((t.\norm_\zb).\norm_\bullet,(t.\norm'_\zb).\norm_\bullet))\to_{t\to\infty}\int_\bbr \max(x,c)d\sigma(\norm_\zb,\norm'_\zb).
    \end{equation}
    On the one hand, using properties of the relative limit measure Proposition \ref{prop:measuregraded}, we find that
    $$\int_\bbr \max(x,c)d\sigma(\norm_\zb,\norm'_\zb)=\int_\bbr xd\sigma(\norm_\zb,(e^{c\bullet}\norm_\zb)\vee \norm'_\zb)=\vol(\norm_\zb,(e^{c\bullet}\norm_\zb)\vee \norm'_\zb).$$
    On the other hand,
    \begin{align*} 
    &\int_\bbr \max(x/t,c)\,d\sigma((t.\norm_\zb).\norm_\bullet,(t.\norm'_\zb).\norm_\bullet))\\
    &=t\mi\int_\bbr \max(x,tc)\,d\sigma((t.\norm_\zb).\norm_\bullet,(t.\norm'_\zb).\norm_\bullet))\\
    &=t\mi\int_\bbr xd\sigma((t.\norm_\zb).\norm_\bullet,(e^{tc}((t.\norm_\zb).\norm_\bullet))\vee (t.\norm'_\zb).\norm_\bullet)\\
    &=t\mi\int_\bbr xd\sigma((t.\norm_\zb).\norm_\bullet,(t.e^{tc}\norm_\zb).\norm_\bullet\vee (t.\norm'_\zb).\norm_\bullet)\\
    &=t\mi\vol((t.\norm_\zb).\norm_\bullet,(t.e^{tc}\norm_\zb).\norm_\bullet\vee (t.\norm'_\zb).\norm_\bullet).
    \end{align*}
    In the third equality, we have used used the identities $e^{\bullet c}(\norm_\zb.\norm_\bullet)=(e^{\bullet c}\norm_\zb).\norm_\bullet$ and $t.e^{\bullet c}\norm_\zb=e^{tc\bullet}(t.\norm_\zb)$, which are easily seen to hold. Applying the cocycle formula and Proposition \ref{prop:volumeasymptotics}, we then have that
    \begin{align*}
        &\lim_{t\to\infty}\int_\bbr \max(x/t,c)\,d\sigma((t.\norm_\zb).\norm_\bullet,(t.\norm'_\zb).\norm_\bullet))\\
        &=\lim_{t\to\infty}t\mi\vol((t.\norm_\zb).\norm_\bullet,(t.e^{tc}\norm_\zb).\norm_\bullet\vee (t.\norm'_\zb).\norm_\bullet)\\
        &=\lim_{t\to\infty}t\mi\vol((t.\norm_\zb).\norm_\bullet,(t.e^{tc}\norm_\zb).\norm_\bullet)\\
        &+\lim_{t\to\infty} t\mi\vol((t.e^{tc}\norm_\zb).\norm_\bullet,(t.e^{tc}\norm_\zb).\norm_\bullet\vee (t.\norm'_\zb).\norm_\bullet)\\
        &=c+\vol(e^{tc}\norm_\zb,(e^{c\bullet}\norm_\zb)\vee \norm'_\zb)\\
        &=\vol(\norm_\zb,(e^{c\bullet}\norm_\zb)\vee \norm'_\zb),
    \end{align*}
    thus proving \eqref{eq:cmac} and the theorem.
\end{proof}

\subsection{Flatness.}\label{sect:26}

Consider now $\norm_\zb\in \cN_0(R)$, and the relative limit measure $\sigma(\norm_\zb,\norm_{\triv,\bullet})=\sigma(\norm_\zb)$, whose support we denote by $S$. In \cite{reb:wn}, the author and Witt Nyström showed that for each $\norm_\zb\in \cN_0(R)$, there is an isometric embedding of the space $(\cC(S),L^p(\sigma(\norm_\zb)))$ of convex, decreasing, bounded functions on $S$ into $(\cN_0(R),d_p)$, sending $0$ to $\norm_\zb$ -- in other words, $\norm_\zb$ lies in an infinite-dimensional metric flat. In this section, we prove Theorem C building on this construction.

Consider, for each $k$, a basis $(s_{m,i})_i$ of $R_m$ that is orthogonal for $\norm_{0,m}$. Pick $f\in \cC(S)$. We define a norm $\iota[f]\norm_{0,\bullet}$ by setting
$$\iota[f]\norm[\sum a_i s_{m,i}]_{0,m}:=\max_i |a_i|_0 e^{mf(-m\mi\log\norm[s_{m,i}]_{0,m})}.$$
Analogously to \cite{reb:wn}, we have:

\begin{theorem}
    Given $f\in \cC(S)$, then $\iota[f]\norm_\zb\in \cN(R)$. Furthermore, 
    \begin{align*}
        (\cC(S),L^p(\sigma(\norm_\zb)))&\hookrightarrow (\cN_0(R),d_p),\\
        f&\mapsto (\iota[f]\norm_\zb)
    \end{align*}
    is an isometric embedding.
\end{theorem}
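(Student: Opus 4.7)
The plan exploits a crucial feature of the construction: by definition, every norm $\iota[f]\norm_{0,m}$ (as $f$ ranges over $\cC(S)$) is diagonal in the \emph{same} fixed basis $(s_{m,i})_i$, so this basis is jointly orthogonal for all these norms simultaneously. The problem thereby reduces to explicit computations involving the real numbers $\lambda_{m,i}:=-m\mi\log\norm[s_{m,i}]_{0,m}$, and the only nontrivial issue is submultiplicativity.

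\textbf{Step 1: verifying $\iota[f]\norm_\zb\in\cN_0(R)$.} Each $\iota[f]\norm_{0,m}$ is a trivially-valued norm by construction. Boundedness is automatic: if $|f|\leq M$ on $S$, then $m\mi d_\infty(\iota[f]\norm_{0,m},\norm_{\triv,m})\leq M$ uniformly in $m$. For submultiplicativity, write $s=\sum_i a_i s_{m,i}$, $t=\sum_j b_j s_{n,j}$ and expand $s_{m,i}\cdot s_{n,j}=\sum_k\gamma_{ij,k}s_{m+n,k}$, so $st=\sum_k c_k s_{m+n,k}$ with $c_k=\sum_{ij}a_ib_j\gamma_{ij,k}$. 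For each $k$ with $c_k\neq 0$, pick $(i_0,j_0)$ with $a_{i_0},b_{j_0},\gamma_{i_0j_0,k}$ all nonzero. Applying submultiplicativity of $\norm_{0,\bullet}$ to $s_{m,i_0}\cdot s_{n,j_0}$ (whose expansion in the basis of $R_{m+n}$ has nonzero coefficient at $s_{m+n,k}$) yields
$$(m+n)\lambda_{m+n,k}\geq m\lambda_{m,i_0}+n\lambda_{n,j_0}.$$
Since $f$ is \emph{decreasing} and then \emph{convex},
$$(m+n)f(\lambda_{m+n,k})\leq (m+n)f\!\left(\tfrac{m\lambda_{m,i_0}+n\lambda_{n,j_0}}{m+n}\right)\leq mf(\lambda_{m,i_0})+nf(\lambda_{n,j_0}),$$
hence $e^{(m+n)f(\lambda_{m+n,k})}\leq e^{mf(\lambda_{m,i_0})}\cdot e^{nf(\lambda_{n,j_0})}$. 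Using $|c_k|_0=|a_{i_0}|_0=|b_{j_0}|_0=1$ and taking a maximum over $k$ with $c_k\neq 0$ gives $\iota[f]\norm[st]_{0,m+n}\leq\iota[f]\norm[s]_{0,m}\cdot\iota[f]\norm[t]_{0,n}$.

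\textbf{Step 2: the isometry.} For $f,g\in\cC(S)$, the basis $(s_{m,i})_i$ jointly orthogonalises $\iota[f]\norm_{0,m}$ and $\iota[g]\norm_{0,m}$, whence (up to a reordering irrelevant for $L^p$ sums)
$$\lambda_i(\iota[f]\norm_m,\iota[g]\norm_m)=m\bigl(g(\lambda_{m,i})-f(\lambda_{m,i})\bigr).$$
Setting $\mu_m:=n_m\mi\sum_i\delta_{\lambda_{m,i}}$, it follows that
$$m\mi d_p(\iota[f]\norm_m,\iota[g]\norm_m)=\left(\int|g-f|^p\,d\mu_m\right)^{1/p}.$$
By the definition of $\sigma(\norm_\zb)=\sigma(\norm_\zb,\norm_{\triv,\bullet})$, together with $\lambda_i(\norm_m,\norm_{\triv,m})=m\lambda_{m,i}$, the measures $\mu_m$ converge weakly to $\sigma(\norm_\zb)$. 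Since $f,g$ are bounded on the compact $S$ and continuous on its interior (convexity), $|g-f|^p$ is bounded and $\sigma(\norm_\zb)$-a.e.\ continuous, so one may pass to the limit:
$$d_p(\iota[f]\norm_\zb,\iota[g]\norm_\zb)^p=\int|g-f|^p\,d\sigma(\norm_\zb)=\|g-f\|_{L^p(\sigma(\norm_\zb))}^p.$$

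\textbf{Main obstacle.} The submultiplicativity step is the only substantive point, and it is the unique place where both the convexity and the monotonicity assumptions on elements of $\cC(S)$ are essential: monotonicity converts submultiplicativity of $\norm_{0,\bullet}$ (an inequality about $\lambda_{m+n,k}$ from below) into an inequality about $f(\lambda_{m+n,k})$ from above, and convexity then bounds this by the desired convex combination. A minor technical caveat is the weak-convergence step against $|g-f|^p$: this is standard for bounded Borel functions continuous $\sigma(\norm_\zb)$-a.e., and the only possible discontinuities of convex $f,g$ are at the boundary points of $S$, which can always be handled by a routine approximation if $\sigma(\norm_\zb)$ happens to charge them.
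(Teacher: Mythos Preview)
Your proof is correct and follows essentially the same approach as the paper's. For submultiplicativity, the paper reduces to basis elements first (legitimate since all norms are diagonal in the fixed bases and the ultrametric inequality handles the rest), while you treat general sections directly by choosing an appropriate $(i_0,j_0)$ for each $k$; both arguments then use decreasingness followed by convexity of $f$ in exactly the same way. For the isometry, the paper simply refers to \cite[(3.3), Theorem 3.5]{reb:wn}, whereas you spell out the expected argument via weak convergence of $\mu_m=n_m^{-1}\sum_i\delta_{\lambda_{m,i}}$ to $\sigma(\norm_\zb)$, which is precisely what that reference does.
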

\begin{proof}
    We first note that, mirroring the arguments in \cite[Theorem 2.2]{reb:wn}, if $f$ has bounded right-derivative, then $\iota[f]\norm_\zb\in \cN(R)$. The isometry then follows \textit{mutatis mutandis} from the proof of \cite[Theorem 3.5]{reb:wn}.
\end{proof}

In particular, we note that:
\begin{enumerate} 
    \item for $f\equiv 0$, $\iota[f]\norm_\zb=\norm_{\triv,\bullet}$;
    \item for $f(\lambda)=-\lambda$, $\iota[f]\norm_\zb=\norm_\zb$;
    \item for $t>0$ and $f(\lambda)=-t\lambda$, $\iota[f]\norm_\zb=t.\norm_\zb$;
    \item for $f(\lambda)=-t+c$, $\iota[f]\norm_\zb=e^{\bullet c}\norm_\zb$.
\end{enumerate}

An important point in the construction is its flexibility: it only requires diagonalising $\norm_\zb$, which allows us to use it to construct flats in $\cN(R)$, much as it was used to construct flats in spaces of Kähler metrics in \cite{reb:wn}. We begin with the following:

\begin{definition}
    Let for each $m$, $s_m:=(s_{m,i})_i$ be a sequence of bases of $R_m$. We define the \textit{apartment} associated with this sequence $s_{\bullet}$ to be the set of norms $\norm_\bullet\in \cN(R)$ such that, for each $m$, $\norm_m$ admits $s_m$ as an orthogonal basis.
\end{definition}

It is clear that two norms in $\cN(R)$ lie in \textit{a} same apartment. However, unlike in the finite-dimensional case, it seems unclear how to show arbitrary apartments are nonempty, nor how to construct a "full" apartment given a norm diagonalised in a given sequence of bases. Similarly, this explains why we cannot prove Theorem \ref{thm:convergencemeasure} in the same way that we proved Theorem \ref{thm:convergencemeasurefindim}: given an apartment in which both $\norm_\zb,\norm'_\zb$ belong, we would need to construct $\norm_\bullet\in \cN(R)$ which also belongs to this apartment. However, the analogue of Remark \ref{rem:diag} still holds:

\begin{proposition}
    Let $\norm_\bullet\in \cN(R)$ and $\norm_\zb\in \cN_0(R)$. If $s_\bullet$ is a sequence of bases that is jointly orthogonal for both, then for all $\norm'_\zb$ in the apartment of $\cN_0(R)$ defined by $s_\bullet$, we have
    $$d_p(\norm_\zb.\norm,\norm'_\zb.\norm)=d_p(\norm_\zb,\norm'_\zb).$$
\end{proposition}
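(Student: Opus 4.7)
The plan is to quantise Remark \ref{rem:diag} degree by degree. By hypothesis, for each $m$ the basis $s_m$ is orthogonal for $\norm_m$, $\norm_{0,m}$, and $\norm'_{0,m}$ (the last because $\norm'_\zb$ lies in the apartment defined by $s_\bullet$). Remark \ref{rem:diag} therefore applies and gives, for every $i$,
\[
\lambda_i(\norm_{0,m}.\norm_m,\norm'_{0,m}.\norm_m)=\lambda_i(\norm_{0,m},\norm'_{0,m}),
\]
so that as measures on $\bbr$,
\[
\sigma(\norm_{0,m}.\norm_m,\norm'_{0,m}.\norm_m)=\sigma(\norm_{0,m},\norm'_{0,m}).
\]

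Next I would pass to the limit $m\to\infty$. Pushing forward by the $m^{-1}$-dilation preserves this identity,
\[
m\mi_*\sigma(\norm_{0,m}.\norm_m,\norm'_{0,m}.\norm_m)=m\mi_*\sigma(\norm_{0,m},\norm'_{0,m}),
\]
and the Chen--Maclean convergence theorem recalled in Section \ref{sect:22} states that the left-hand side converges weakly to $\sigma(\norm_\zb.\norm_\bullet,\norm'_\zb.\norm_\bullet)$ while the right-hand side converges to $\sigma(\norm_\zb,\norm'_\zb)$. Uniqueness of weak limits yields
\[
\sigma(\norm_\zb.\norm_\bullet,\norm'_\zb.\norm_\bullet)=\sigma(\norm_\zb,\norm'_\zb).
\]

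Integrating $|x|^p$ against this common measure and taking $p$-th roots gives the desired equality $d_p(\norm_\zb.\norm_\bullet,\norm'_\zb.\norm_\bullet)=d_p(\norm_\zb,\norm'_\zb)$ for every $p\in[1,\infty)$. There is no real obstacle here: the only ingredient beyond the definitions is the finite-dimensional observation in Remark \ref{rem:diag}, which the joint orthogonality hypothesis makes applicable in every degree simultaneously. The content of the statement is really that the apartment structure in $\cN_0(R)$ determined by $s_\bullet$ is preserved isometrically under the Gérardin action against any norm diagonalised by the same $s_\bullet$, mirroring the fact (noted in Section \ref{sect:26}) that apartments are the graded analogue of maximal flats in Bruhat--Tits buildings.
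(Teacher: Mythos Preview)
Your proof is correct and matches the paper's intent: the proposition is stated there without proof, immediately after the remark that ``the analogue of Remark \ref{rem:diag} still holds,'' so the argument is meant to be exactly the degree-by-degree application of Remark \ref{rem:diag} followed by passage to the limit. Note that you could have bypassed the measure-level statement and Chen--Maclean entirely, since Remark \ref{rem:diag} already gives $d_p(\norm_{0,m}.\norm_m,\norm'_{0,m}.\norm_m)=d_p(\norm_{0,m},\norm'_{0,m})$ directly in each degree, and dividing by $m$ and taking the limit yields the conclusion; but your route through the relative spectral measures is equally valid and in fact proves the slightly stronger equality $\sigma(\norm_\zb.\norm_\bullet,\norm'_\zb.\norm_\bullet)=\sigma(\norm_\zb,\norm'_\zb)$.
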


This gives Theorem C:
\begin{theorem}\label{thm:b}
    Let $\norm_\bullet\in \cN(R)$. Then, for all $\norm_\zb\in \cN_0(R)$, there exists an isometric embedding
    \begin{align*}
        (\cC(S),L^p(\sigma(\norm_\zb)))&\hookrightarrow (\cN(R),d_p),\\
        f&\mapsto (\iota[f]\norm_\zb).\norm_\bullet,
    \end{align*}
    and furthermore:
    \begin{enumerate}
        \item the apex $f\equiv 0$ is mapped to $\norm_\bullet$;
        \item for $t\in [0,\infty)$, setting $f_t(\lambda)=-t\lambda$, then $\iota[f_t]\norm_\zb.\norm$ is the geodesic ray $(t.\norm_\zb).\norm$.
    \end{enumerate}
\end{theorem}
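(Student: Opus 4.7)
The approach is to combine the isometric embedding $f \mapsto \iota[f]\norm_\zb$ of the preceding theorem with the Proposition stated immediately above, which says that the $\cN_0(R)$-action on $\norm_\bullet$ is $d_p$-preserving within any apartment of $\cN_0(R)$ sharing a joint orthogonal sequence of bases with $\norm_\bullet$. The key observation is that by definition $\iota[f]\norm_\zb$ is orthogonalised in exactly the sequence of bases used to construct it, so all $\iota[f]\norm_\zb$ live in the same apartment of $\cN_0(R)$, and one has freedom to choose that apartment.

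First I would treat the diagonalisable case $\norm_\bullet \in \cN^\diag(R)$. In each degree $m$, by joint diagonalisability of a norm in $\cN^\diag(R_m)$ with any trivially-valued norm, pick a basis $s_m = (s_{m,i})_i$ of $R_m$ jointly orthogonal for $\norm_m$ and $\norm_{0,m}$, and use this sequence $s_\bullet$ to define $\iota$. By construction every $\iota[f]\norm_\zb$ is diagonalised in $s_\bullet$, hence lies in the apartment of $\cN_0(R)$ defined by $s_\bullet$. The preceding Proposition, applied with $\iota[f]\norm_\zb$ as the reference (jointly diagonalised with $\norm_\bullet$) and $\iota[g]\norm_\zb$ as the other norm in the apartment, then yields
$$d_p(\iota[f]\norm_\zb.\norm_\bullet, \iota[g]\norm_\zb.\norm_\bullet) = d_p(\iota[f]\norm_\zb, \iota[g]\norm_\zb) = \|f-g\|_{L^p(\sigma(\norm_\zb))},$$
the last equality being the isometric embedding of the preceding theorem. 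For general $\norm_\bullet \in \cN(R)$, I would pass to a spherically complete valued field extension $(\mfl, |\cdot|_\mfl)$ of $(\field,|\cdot|)$ in which $\norm_{\bullet,\mfl}$ becomes diagonalisable. The $\iota$-construction commutes with ground field extension since orthogonal bases remain orthogonal after base change, the $\cN_0$-action does so in the diagonalisable case by Proposition \ref{prop:envelopeground}, the measure $\sigma(\norm_\zb)$ and its support $S$ are preserved because successive minima are (Proposition \ref{prop:fieldextdp}), and ground field extension is itself a $d_p$-isometry by the same Proposition; so the identity obtained over $\mfl$ descends to $\field$.

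Properties (1) and (2) are then routine. For (1), $\iota[0]\norm_\zb = \norm_{\triv,\bullet}$ by the examples listed after the preceding theorem, and $\norm_{\triv,\bullet}.\norm_\bullet = \norm_\bullet$ is immediate from the envelope definition of the $\cN_0$-action. For (2), the identity $\iota[tf_1]\norm_\zb = t.\iota[f_1]\norm_\zb$ with $f_1(\lambda) = -\lambda$ (which uses $|a|_0^t = |a|_0$ for the trivial absolute value) combined with part (3) of Theorem A shows that $t \mapsto \iota[f_t]\norm_\zb.\norm_\bullet = (t.\norm_\zb).\norm_\bullet$ is a geodesic ray. The main obstacle is the compatibility of the $\iota$-construction and the $\cN_0$-action with ground field extension in the non-diagonalisable case; both should follow from the behaviour of orthogonal bases under extension and from Proposition \ref{prop:envelopeground}, but articulating them cleanly requires some care.
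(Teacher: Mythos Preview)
Your treatment of the diagonalisable case is correct and matches the paper's argument exactly: pick a sequence of bases jointly orthogonal for $\norm_\bullet$ and $\norm_\zb$, use it to define $\iota$, and then invoke the preceding Proposition together with the isometric embedding $f\mapsto \iota[f]\norm_\zb$.

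The non-diagonalisable case, however, has a genuine gap. Your plan is to extend to a spherically complete $\mfl$, apply the diagonalisable case to $\norm_{\bullet,\mfl}$, and descend. The descent step requires
$$(\iota[f]\norm_\zb.\norm_\bullet)_\mfl = (\iota[f]\norm_\zb)_\mfl.\norm_{\bullet,\mfl},$$
but Proposition \ref{prop:envelopeground} only guarantees this equality when $\norm_\bullet\in\cN^\diag(R)$; in general one merely has the inequality $\norm_{0,\mfl}.\norm_\mfl\leq (\norm_0.\norm)_\mfl$. Since we are precisely in the case where $\norm_\bullet$ is \emph{not} diagonalisable over $\field$, the commutation you need is not available, and the paper explicitly flags this after Proposition \ref{prop:envelopeground}: problems involving envelopes of non-diagonalisable norms ``cannot always be tackled by passing to the diagonalisable case \textit{via} spherically complete extensions.'' Your closing remark that this ``should follow from Proposition \ref{prop:envelopeground}'' is therefore misplaced: that proposition establishes exactly the obstruction, not the compatibility.

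The paper instead handles the general case by $d_\infty$-approximation: by Proposition \ref{prop:dinftystrong}(1), choose $\norm_{\bullet,k}\in\cN^\diag(R)$ with $d_\infty(\norm_{\bullet,k},\norm_\bullet)\to 0$; then Corollary \ref{coro:actioncontractivity} (together with $d_p\leq d_\infty$) gives
$$d_p(\iota[f]\norm_\zb.\norm_{\bullet,k},\iota[f]\norm_\zb.\norm_\bullet)\leq d_\infty(\norm_{\bullet,k},\norm_\bullet)\to 0,$$
so the identity $d_p(\iota[f]\norm_\zb.\norm_{\bullet,k},\iota[g]\norm_\zb.\norm_{\bullet,k})=\|f-g\|_{L^p(\sigma(\norm_\zb))}$ from the diagonalisable case passes to the limit. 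This avoids ground field extension entirely and is the cleaner route here.
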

\begin{proof}
    If $\norm_\bullet\in \cN^\diag(R)$, the result immediately follows from the previous proposition on picking a jointly diagonal sequence of bases for $\norm_\bullet$ and $\norm_\zb$, since the construction requires an arbitrary diagonalising basis for $\norm_\zb$.

    In the non-diagonalisable case, we $d_\infty$-approximate $\norm_\bullet$ by a sequence of norms $\norm_{\bullet,k}\in \cN^\diag(R)$, using Proposition \ref{prop:dinftystrong}(1). Then, it follows from \eqref{eq:dpcomp} and Corollary \ref{coro:actioncontractivity} that 
    $$d_p(\iota[f]\norm_\zb.\norm_{\bullet,k},\iota[f]\norm_\zb.\norm_\bullet)\leq d_\infty(\iota[f]\norm_\zb.\norm_{\bullet,k},\iota[f]\norm_\zb.\norm_\bullet)\to 0.$$
    But
    $$\norm[f-g]_{L^p(\sigma(\norm_\zb))}=d_p(\iota[f]\norm_\zb.\norm_{\bullet,k},\iota[g]\norm_\zb.\norm_{\bullet,k})$$
    for all $k$, while the right-hand side convergs to $d_p(\iota[f]\norm_\zb.\norm_{\bullet},\iota[g]\norm_\zb.\norm_{\bullet})$. This concludes the proof.
\end{proof}

\subsection{Completion and interpretation in terms of non-Archimedean pluripotential theory.}\label{sect:2na}

Unlike the case $p=\infty$, $\cN(R)/\sim$ is not \textit{a priori} complete with respect to $d_p$, and we thus define for each $p\in [1,\infty)$ the metric completion
$$\cN^p(R)$$
of $\cN(R)/\sim$ with respect to $d_p$. It is clear that, for $1\leq p \leq q$,
$$\cN^q(R)\subseteq \cN^p(R)\subseteq \cN^1(R).$$
We define likewise $\cN_0^p(R)$. Corollary \ref{coro:isometry} then extends to completions:
$$(\cN_0^p(R),d_p)\simeq (\hat \cR^p(R),d_p).$$
Under a condition known as the \textit{envelope property}, which is known in particular when $\dim X=1$ \cite{book:thu}, or when $X$ is smooth of arbitrary dimension and $\field$ is trivially valued or local of equal characteristic zero \cite{bj:ssemi}), it is shown in \cite{reb:1} that $\cN(R)/\sim$ embeds into a space of plurisubharmonic non-Archimedean metrics on the Berkovich analytification $(X\an,L\an)$ of $(X,L)$ with respect to the absolute value $|\cdot|$ on $\field$ in the sense of \cite{book:berk}. 

Following the arguments of \cite[Theorem 12.8]{bj:trivval} using the synthetic results in \cite{bj:synthetic}, one can in fact see that continuity of envelope holds if and only if $\cN_1(R)$ coincides exactly with the space $\cE^1(L)$ of \textit{finite-energy} non-Archimedean metrics on $L$. Likewise, $\cN^1_0(L)$ is exactly the space of finite-energy metrics $\cE^1_0(L)$ on the Berkovich analytification $(X_0\an,L_0\an)$ with respect to the \textit{trivial} absolute value on $\field$. In particular, elements in the completions $\cN^p(L)$ (under continuity of envelopes) and $\cN^p_0(L)$ are both realised as non-Archimedean metrics -- although, unlike the case $p=1$, there is no characterisation in terms of pluripotential theory for the elements in $\cE^1(L)\cap\cN^p(R)$. From this point of view, our results may then be interpreted in terms of the action of the space of trivially-valued non-Archimedean metrics $\cE_0^1(L)$ on $\cE^1(L)$.

\bibliographystyle{alpha}
\bibliography{bib}

\end{document}